\newtheorem{thm}{Theorem}[section]
\newtheorem{lem}[thm]{Lemma}
\newtheorem{prop}[thm]{Proposition}
\newtheorem{cor}[thm]{Corollary}
\theoremstyle{definition}
\newtheorem{definition}[thm]{Definition}
\newtheorem{remark}[thm]{Remark}
\newtheorem{example}[thm]{Example}
\numberwithin{equation}{section}
\newcommand{\Z}{\mathbb{Z}}
\newcommand{\N}{\mathcal{N}}
\def\ker{\mbox{\rm ker}}
\def\deg{\mbox{\rm deg}}
\def\sdefect{\mbox{\rm sdefect}}
\begin{document}

\title{On quasi-equigenerated and Freiman cover ideals of graphs}

%\author{Benjamin Drabkin, Lorenzo Guerrieri }
%\date{June 2017}

\author{Benjamin Drabkin \thanks{ University of Nebraska -- Lincoln, Lincoln Nebraska \textbf{Email address:} benjamin.drabkin@huskers.unl.edu }  \and Lorenzo Guerrieri
\thanks{Universit\`a di Catania, Dipartimento di Matematica e
Informatica, Viale A. Doria, 6, 95125 Catania, Italy \textbf{Email address:} guelor@guelan.com }} 

\maketitle

\begin{abstract}
\noindent A quasi-equigenerated monomial ideal $I$ in the polynomial ring $R= k[x_1, \ldots, x_n]$ is a Freiman ideal if $\mu(I^2) = l(I)\mu(I)- \binom{l(I)}{2}$ where $l(I)$ is the analytic spread of $I$ and $\mu(I)$ is the number of minimal generators of $I$. Freiman ideals are special since there exists an exact formula computing 
the minimal number of generators of any of their powers. In this work we address the question of characterizing which cover ideals of simple graphs are Freiman.

\medskip

\noindent MSC: 13F20; 13A30; 13C05; 05C25. \\
\noindent Keywords: Monomial ideals, Cover ideals of graphs, Freiman ideals, fiber cone.
\end{abstract}

\section{Introduction}
Given an homogeneous ideal $I$ in a polynomial ring $R= k[x_1, \ldots, x_n]$, it is in general a difficult problem to exactly compute the number of generators of each power of $I$. 
%\comment{only in term of the number of generators of $I$ and of some other possible invariant.} 
In the case that $I$ is generated by a regular sequence and has $\mu(I)=t$ generators, it is well-known that $ \mu(I^m)=\binom{m+t-1}{t-1} $.  This is the largest possible number of generators that $I^m$ can achieve.  At the other extreme, in some cases powers of an ideal can be generated by very few elements.  For instance, the ideals with tiny square defined in \cite{EHM} provide a family of monomial ideals whose members can be generated by arbitrarily many elements but satisfy $\mu(I^2)=9$.

%\comment{as for instance the monomial ideals which have been the object of study of the article \cite{EHM} and are called ideals with tiny square. In that article it is shown that for every possible $m \geq 6$, there exists a monomial ideal in $k[x,y]$, generated by $m$ elements and whose square is generated by exactly 9 elements.}

However, there are large classes of monomial ideals for which the number of minimal generators of $I^n$ is well-behaved.  If all generators of $I$ have the same degree with respect to some standard (or non-standard) $\mathbb{N}$-grading of $R$, then $I$ is called \it equigenerated \rm (or \it quasi-equigenerated\rm).  Such an ideal cannot have a tiny square.  In particular, when $I$ is equigenerated or quasi-equigenerated a result of Herzog, Mohammadi Saem, and Zamani \cite[Theorem 1.9]{HMZ} shows that $\mu(I^2) \geq l(I)\mu(I)- \binom{l(I)}{2}$ where $l(I)$ is the analytic spread of $I$.
%if all the generators of a monomial ideal $I$ have the same degree in some (possibly non-standard) $\mathbb{N}-$grading of $R$ (such ideals will be called \it equigenerated, \rm or respectively \it quasi-equigenerated \rm if the considered grading is non-standard), then $I$ cannot have a tiny square. In particular, in this case it has been proved in \cite[Theorem 1.9]{HMZ} that $\mu(I^2) \geq l(I)\mu(I)- \binom{l(I)}{2}$ where $l(I)$ is the analytic spread of $I$. 
Furthermore in \cite[Proposition 1.8]{Herz}, Herzog and Zhu show a similar inequality, describing a lower bound for the number of generators of any power of $I$. These inequalities are consequence of a famous theorem of Freiman \cite{freiman}, proved in the context of additive number theory and stating that, for a finite set $X \subseteq \mathbb{Z}^n$, $$ |2X| \geq (d + 1)|X| - \binom{d+1}{2}$$ where $d$ is the dimension of the smallest affine subspace of $\mathbb{Q}^n$ containing $X$ and $2X = \{ a+b \mbox{ : } a,b \in X \} $.
The lower bound for $ \mu(I^2) $ is then obtained applying Freiman's theorem to the set of exponents of the generators of $I.$

In \cite{Herz}, Herzog and Zhu also consider the case in which this lower bound is met.  They define a \textit{Freiman ideal} to be an equigenerated monomial ideal $I$ such that $\mu(I^2) = l(I)\mu(I)- \binom{l(I)}{2}$. In \cite{HHZ}, Herzog, Hibi, and Zhu extend the same definition to quasi-equigenerated monomial ideals.  What makes a Freiman ideal very interesting is the fact that the number of generators of any power can be computed by an exact formula in terms of the number of generators of the ideal and of the analytic spread (see \cite[Corollary 1.9]{Herz}). This formula generalizes the formula existing in the case of ideals generated by regular sequences, since for them $l(I)=\mu(I)$.

The two papers \cite{Herz},\cite{HHZ} provide several characterizations of Freiman ideals in terms of their Hilbert polynomials and fiber cones. In particular \cite[Theorem 1.3]{HHZ} shows that Freiman ideals are exactly those ideals with linear Hilbert polynomials and with fiber cones having minimal multiplicity.
%, together with other equivalent properties, it is shown that Freiman ideals are those having linear Hilbert polynomial and also those having fiber cone of minimal multiplicity. 
This is a very restrictive property which, for ideals arising from combinatorial structures, often guarantees strong combinatorial properties.  Freiman ideals in the classes of principal Borel ideals, Hibi ideals, Veronese type ideals, matroid ideals, and edge ideals of graphs have been studied in \cite{Herz} and \cite{HHZ}.   %Freiman ideals seem to be not so many, but the authors of \cite{Herz},\cite{HHZ} study and sometimes completely characterize Freiman ideals among known classes of monomial ideals such as principal Borel ideals, Hibi ideals, Veronese type ideals, edge ideals of graph, and ideals of matroids. 

The aim of this article is to study Freiman ideals among cover ideal of graphs. Let $G$ be a finite simple graph 
%(no loops, no orientation and no multi-edges) 
on $n$ vertices with edge set $E$.  Identifying each vertex with a variable $x_i \in R$, the cover ideal $J(G)$ is defined to be \[J(G):=\bigcap_{\{x_i,x_j\}\in E}(x_i,x_j) \subseteq R.\]
Cover ideals are squarefree monomial ideals and have been studied by many authors in the last twenty years. For an overview we refer to \cite{VT}.  For a more detailed study of the foundational results on cover ideals, we refer to \cite{HHT}.

%, but also we can cite \bf decide what \rm as articles of deep interest studying their properties. 
%They are often considered as dual of edge ideals, in the sense that when $x_{i_1} \cdots x_{i_t}$ is a minimal generator of $J(G)$, the ideal $(x_{i_1}, \ldots, x_{i_t})$ is an associated prime of the edge ideal of $G$ and viceversa.

In Section 2 we recall basic definitions and results about fiber cones, Freiman ideals and cover ideals. Furthermore, we introduce the notion of equivalent vertices in graphs and explore their relation to cover ideals.
%We include the proof of same result that will be useful later in this work and we also show that when referring to fiber cones and properties related to generators and relations on cover ideals, one can always reduce to consider graphs without equivalent vertices (we call two vertices \it equivalent \rm if they have the same set of neighbors).

In studying Freiman property for cover ideals, the first challenge is that cover ideals are often not quasi-equigenerated.  In Section 3 we approach this challenge. It is difficult to find a purely combinatorial characterizations of quasi-equigeneratedness but we describe several criteria and give explicit characterization for cover ideals of some classes of graphs including trees, circulant graphs, and graphs with independence number one or two. Finally, we consider the behavior of cover ideals of graphs obtained through various constructions.

%obtaining connecting two distinct graphs with extra edges.

In Section 4 we consider the Freiman property for cover ideals. We describe when the join of two graphs is Freiman, then we relate squarefree Freiman ideals with ideals of minors of $2 \times n$ generic matrices and we prove that, in general, graphs that are close enough to be complete have Freiman cover ideal.
Since the defining ideal of the fiber cone of Freiman ideal has a 2-linear free resolution, it is clear that, when is nonzero, it must be generated in degree 2. However, we also find graphs for which the defining ideal of the fiber cone  of the cover ideal is generated in any possible degree $n \geq 3$, and hence their cover ideals are not Freiman.

Finally, in Section 5 we characterize Freiman cover ideals among different families of graphs.  We consider the family of the pairs of complete graphs sharing a vertex, the family of circulant graphs, and the family of whiskered graphs. For all these families, we also explicitly compute the analytic spread of the cover ideals.

\section{Preliminaries}

In this section we recall definitions and several preliminary results about fiber cones, Freiman ideals and cover ideals of graphs.

\subsection{Fiber cones and Freiman ideals}

Let $I$ be an homogeneous ideal in the polynomial ring $R=k[x_1, \ldots, x_n]$, where $k$ is any field. The \it fiber cone \rm of $I$ is the standard-graded $k$-algebra $$ F(I)= \bigoplus_{i=0}^{\infty} \dfrac{I^i}{\mathfrak{m}I^i},
$$ where $\mathfrak{m}=(x_1, \ldots, x_n)$ is the homogeneous maximal ideal of $R$. The coefficients of the Hilbert series of the fiber cone are $\mbox{dim}_k(\frac{I^i}{\mathfrak{m}I^i})= \mu(I^i)$, where $\mu(I^i)$ indicates as usual the minimal number of generators of $I^i$. %The fiber cone of an ideal is a ring construction useful to encode the relations between the minimal generators of the ideal.
The Krull dimension of $F(I)$ is called \it analityc spread \rm and it is usually denoted by $l(I)$; this invariant measures how fast the number of generators of the powers of $I$ increase.

If the fiber cone of $I$ is isomorphic to a polynomial ring in $\mu(I)$ variables, the ideal $I$ is called of \it linear type\rm. This happens if the generators of $I$ form a regular sequence. 

In order to study the Freiman property of ideals, we will consider (quasi-)equigenerated homogeneous ideals. We give the following definition:

\begin{definition}
Let $I=(f_1, \ldots, f_t) \subseteq k[x_1, \ldots, x_n]$ be a monomial ideal. Write its monomial generators as $f_j= \prod_{i=0}^{n}x_i^{c_{i_j}}$. For $\alpha= (a_1, \ldots, a_n) \in \mathbb{N}^n$ we define $$d_{\alpha}(f_j)= \sum_{i=0}^{n} a_ic_{i_j}.$$
We say that $I$ is \it quasi-equigenerated of degree $d$ \rm if there exists $\alpha= (a_1, \ldots, a_n) \in \mathbb{N}_{\geq 1}^n$, such that, for every $j,k$, $$ d_{\alpha}(f_j)= d_{\alpha}(f_k)=d. $$ We say that $I$ is \it equigenerated \rm if all its generators have the same degree (equivalently $I$ is quasi-equigenerated taking $ \alpha= (1, 1, \ldots, 1) $).
 \end{definition}

It is known that the fiber cone of an equigenerated homogeneous ideal is an integral domain because it is isomorphic to the $k$-algebra generated by its minimal generators. We recall this result because, up to changing the grading on the polynomial ring, it can be extended to quasi-equigenerated ideals.

\begin{prop} \label{domain} \rm \cite[Proposition 4.8]{heinzer} \it
Let $R=k[x_1,\ldots,x_n]$ be a (not necessarily standard) $\mathbb{N}$-graded  polynomial ring in $n$ variables over field $k$, and let $\mathfrak{m}$ denote its homogeneous maximal ideal.  Suppose the ideal $I=(f_1,\dots,f_n)$ is homogeneous and equigenerated of degree $t$ with respect to the grading of $R$.  Then $F(I_\mathfrak{m})$ (and thus $F(I)$) is a domain.\end{prop}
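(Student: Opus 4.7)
The plan is to exhibit $F(I)$ as the $k$-subalgebra $k[f_1,\ldots,f_n]$ of $R$, which is automatically a domain since $R$ is. A preliminary remark is that, because $I$ is a homogeneous ideal and $\mathfrak{m}$ is the homogeneous maximal ideal, each $I^i/\mathfrak{m}I^i$ is annihilated by $\mathfrak{m}$ and is therefore a $k$-vector space on which every element of $R\setminus\mathfrak{m}$ acts by a unit. Consequently $F(I_{\mathfrak{m}})=F(I)$ and it is enough to show that $F(I)$ itself is a domain.

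To do this I would introduce the polynomial ring $S=k[y_1,\ldots,y_n]$ and the two graded $k$-algebra maps
\[
\phi\colon S\longrightarrow R,\ y_j\mapsto f_j,\qquad
\psi\colon S\longrightarrow F(I),\ y_j\mapsto \overline{f_j}\in I/\mathfrak{m}I.
\]
The image of $\phi$ is exactly $k[f_1,\ldots,f_n]$, while $\psi$ is surjective because the classes $\overline{f_j}$ generate $F(I)$ as a $k$-algebra. It therefore suffices to prove the equality of kernels $\ker(\phi)=\ker(\psi)$, since this will yield an isomorphism $F(I)\cong k[f_1,\ldots,f_n]\subseteq R$. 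The inclusion $\ker(\phi)\subseteq\ker(\psi)$ is immediate, as $P(f_1,\ldots,f_n)=0$ in $R$ clearly implies that its class in every $I^i/\mathfrak{m}I^i$ is zero.

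For the reverse inclusion, I would decompose into components homogeneous in the $y_j$ and assume $P$ is homogeneous of degree $i$ with $\psi(P)=0$, so that $P(f_1,\ldots,f_n)\in \mathfrak{m}I^i$. In the $\mathbb{N}$-grading of $R$ the element $P(f_1,\ldots,f_n)$ is homogeneous of degree $it$, whereas $\mathfrak{m}I^i$ is generated by products $x_k f_{j_1}\cdots f_{j_i}$ of degree $\deg(x_k)+it$. The only genuine point requiring care is the strict positivity of $\deg(x_k)$ for every variable: this is exactly where the hypothesis that $\mathfrak{m}$ is the \emph{homogeneous} maximal ideal of the $\mathbb{N}$-graded ring $R$ enters, since it forces $R_0=k$ and hence $\deg(x_k)\geq 1$ for all $k$. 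Once that is in place, every homogeneous element of $\mathfrak{m}I^i$ has degree strictly greater than $it$, which forces $P(f_1,\ldots,f_n)=0$ in $R$ and completes the argument.
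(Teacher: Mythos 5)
Your proof is correct and follows essentially the same route as the paper: both identify $F(I)$ with the subalgebra $k[f_1,\ldots,f_n]\subseteq R$ by observing that, since every variable has positive degree and $I^i$ is generated in degree $it$, the submodule $\mathfrak{m}I^i$ sits entirely in degrees strictly greater than $it$, so $I^i/\mathfrak{m}I^i$ is exactly the degree-$it$ component of $I^i$. Your version merely repackages this as an equality of kernels of the two presentations of $k[f_1,\ldots,f_n]$ and $F(I)$, and correctly flags $R_0=k$ as the point where the grading hypothesis is used.
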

\begin{proof} %\bf to fix \rm
Let $I_i=I^i\cap R_{it}$ be the $it$-th graded component of $I^i$.  Then $k\oplus I_1\oplus I_2\oplus\cdots=k[f_1,\ldots,f_n]$.  Since $I$ is equigenerated with respect to the grading of $R$, $I^i/\mathfrak{m}I^i\cong I_i$ for all $i$.   Therefore \[k[f_1,\ldots,f_n] \cong \bigoplus_{i=0}^\infty \left( \dfrac{I^i}{\mathfrak{m}I^i} \right) \cong \bigoplus_{i=0}^\infty \left( \dfrac{I_\mathfrak{m}^i}{\mathfrak{m}I_\mathfrak{m}^i} \right) \cong F(I_\mathfrak{m}).\]  Thus $F(I_\mathfrak{m})$ is a domain.
\end{proof}

\medskip

We specialize now to the case of ideals that can be generated by monomials.
Let $I=(f_1, \ldots, f_t) \subseteq k[x_1, \ldots, x_n]$ be a monomial ideal. Using the homomorphism $T_i \to f_i$, 
we write the fiber cone as $$ F(I) \cong \frac{k[T_1, \ldots, T_t]}{\mathcal{I}} $$ where $k[T_1, \ldots, T_t]$ is a polynomial ring over the same field $k$.
The ideal $\mathcal{I} \subseteq k[T_1, \ldots, T_t]$ is called the \it defining ideal \rm of the fiber cone of $I.$ We describe the structure of the defining ideal in the monomial case:

\begin{prop}  \label{defidealhomog}
 Let $I=(f_1, \ldots, f_t)$ be a quasi-equigenerated monomial ideal. The defining ideal $\mathcal{I} \subseteq k[T_1, \ldots, T_t]$ of the fiber cone $F(I)$ is a homogeneous ideal, and when it is nonzero, it is generated by binomials of the form $$ T_{i_1} \cdots T_{i_r} - T_{j_1}\cdots T_{j_r}$$ and such that $ \lbrace i_1, \ldots, i_r \rbrace \cap \lbrace j_1, \ldots, j_r \rbrace = \emptyset. $
 \end{prop}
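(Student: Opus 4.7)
The plan is to study the surjection $\varphi\colon k[T_1,\ldots,T_t]\twoheadrightarrow F(I)$ defined by $T_j\mapsto f_j$, whose kernel is $\mathcal{I}$, and to exploit both the quasi-equigeneration hypothesis and the fact (from Proposition~\ref{domain}) that $F(I)$ is a domain.

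First, I would verify that $\mathcal{I}$ is homogeneous. Let $\alpha=(a_1,\ldots,a_n)\in\mathbb{N}_{\geq 1}^n$ be a vector with $d_\alpha(f_j)=d$ for all $j$, and regrade $R$ so that the variable $x_i$ has degree $a_i$. Then each $f_j$ is homogeneous of degree $d$, and any product $\prod_j f_j^{a_j}$ with $\sum_j a_j=i$ sits in $I^i$ at the minimum possible $\alpha$-degree $id$, while every element of $\mathfrak{m}I^i$ has strictly larger $\alpha$-degree. Hence such products are nonzero in $F(I)_i=I^i/\mathfrak{m}I^i$, and distinct monomials in $R$ produce distinct, linearly independent classes there. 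Giving $k[T_1,\ldots,T_t]$ the standard grading $\deg(T_j)=1$ therefore makes $\varphi$ a graded ring homomorphism, and $\mathcal{I}=\ker\varphi$ is a homogeneous ideal.

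Next, I would show that $\mathcal{I}$ is generated by pure difference binomials. Take $F=\sum_{\mathbf{a}}c_\mathbf{a} T^\mathbf{a}\in\mathcal{I}$ and partition the exponents appearing in $F$ by the relation $\mathbf{a}\sim\mathbf{b}\iff\prod_j f_j^{a_j}=\prod_j f_j^{b_j}$. By the previous step, classes $[\mathbf{a}]\neq[\mathbf{b}]$ contribute linearly independent elements of $F(I)$, so within each class the coefficients must sum to zero. Picking a representative $\mathbf{a}_0$ from each class and using $\sum_{\mathbf{a}\sim\mathbf{a}_0}c_\mathbf{a}=0$ yields
\[
F=\sum_{\mathbf{a}\neq\mathbf{a}_0}c_\mathbf{a}\,\bigl(T^\mathbf{a}-T^{\mathbf{a}_0}\bigr),
\]
expressing $F$ as a $k$-linear combination of binomials $T^\mathbf{a}-T^\mathbf{b}$ with $\prod f_j^{a_j}=\prod f_j^{b_j}$. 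Hence such binomials generate $\mathcal{I}$.

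Finally, I would arrange disjoint supports. Suppose $g=T^\mathbf{a}-T^\mathbf{b}\in\mathcal{I}$ and some index $i$ occurs in both $\mathbf{a}$ and $\mathbf{b}$. Write $g=T_i\bigl(T^{\mathbf{a}'}-T^{\mathbf{b}'}\bigr)$; then $f_i\,\varphi(T^{\mathbf{a}'}-T^{\mathbf{b}'})=\varphi(g)=0$ in $F(I)$, and since $F(I)$ is a domain and $f_i\neq 0$ there, the smaller binomial $T^{\mathbf{a}'}-T^{\mathbf{b}'}$ already lies in $\mathcal{I}$. Iterating strips off all shared indices, realizing every pure difference binomial in $\mathcal{I}$ as a monomial multiple of one with disjoint supports. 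Combined with the previous paragraph, this yields the desired generating set. The main (modest) obstacle is this factoring step, which genuinely requires the domain property of $F(I)$; the rest is routine toric-ideal bookkeeping.
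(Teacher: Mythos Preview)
Your proof is correct and follows essentially the same approach as the paper: both use Proposition~\ref{domain} to identify $F(I)$ with the toric ring $k[f_1,\ldots,f_t]$, deduce binomial generation of $\mathcal{I}$, and invoke the quasi-equigeneration to force homogeneity. The only real difference is that the paper outsources the binomial generation (and implicitly the disjoint-support reduction) to \cite[Proposition~10.1.1]{HH} and then checks homogeneity by a short degree-count contradiction, whereas you unpack both facts by hand---first making $\varphi$ graded via the $\alpha$-weighting, then doing the standard equivalence-class decomposition and the domain-based cancellation of common $T_i$'s. Your version is more self-contained; the paper's is shorter by citation.
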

\begin{proof}
By Proposition \ref{domain}, $F(I) \cong k[f_1, \ldots, f_t]$, hence $F(I)$ is a toric ring and its defining ideal $\mathcal{I}$ is generated by binomials by \cite[Proposition 10.1.1]{HH}.
To see that $\mathcal{I}$ is homogeneous, assume by way of contradiction that it has a generator of the form $T_{i_1} \cdots T_{i_r} - T_{j_1}\cdots T_{j_s}$ with $r \neq s$. This it is equivalent to say that, if $ I=(f_1, \ldots, f_t) $, then $f_{i_1} \cdots f_{i_r} = f_{j_1}\cdots f_{j_s}$. But, since $I$ is quasi-equigenerated, we may find $\alpha \in \mathbb{N}_{\geq 1}^n$ such that $d_{\alpha}(f_i)=d_{\alpha}(f_j)$, for every $i,j$. This implies $rd_{\alpha}(f_{i_1})=sd_{\alpha}(f_{i_1})$, and hence $d_{\alpha}(f_{i_1})=0$, that is a contradiction.
\end{proof}

The next proposition states a criterion that we are going to use trough this paper, to show when the ideals of $F(I)$ generated by monomials of degree one (the variables $T_i$) are primes.

\begin{prop} \label{quotients}
Let $I=(f_1,\dots,f_t)\subseteq k[x_1,\ldots,x_n]$ be a quasi-equigenerated monomial ideal.  Let $S=k[T_1,\ldots,T_t]$, and $\mathcal{I}\subseteq S$ be the defining ideal of $F(I)$.  Let $\mathbf{T}=\{T_1,\ldots,T_t\}$ and let $\mathcal{J}$ be an ideal of $S$ generated by some subset $V\subseteq\mathbf{T}$.  Let $J$ be the image of $\mathcal{J}$ in $F(I)$. The following are equivalent:
\begin{enumerate}
\item $J$ is prime.
\item $\frac{(\mathcal{I}+\mathcal{J})}{\mathcal{J}}$ is either $(0)$ or is generated by binomials. 
\item $\frac{F(I)}{J}=\frac{S}{(\mathcal{I}+\mathcal{J})} \cong k[f_i \mbox{ \rm | } T_i\not\in V]$.
\end{enumerate}
\end{prop}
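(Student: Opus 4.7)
The plan is to prove the implications $(3)\Rightarrow(1)$, $(3)\Rightarrow(2)$, $(1)\Rightarrow(3)$, and $(2)\Rightarrow(3)$, where the last two share a common multigrading argument.

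First I would dispatch the easy directions. For $(3)\Rightarrow(1)$: $k[f_i\mid T_i\notin V]$ is a subring of $R$ and therefore a domain. For $(3)\Rightarrow(2)$: under the isomorphism in (3), the ideal $(\mathcal{I}+\mathcal{J})/\mathcal{J}$ is identified with the defining ideal of $k[f_i\mid T_i\notin V]$ as a quotient of $k[T_i\mid T_i\notin V]$. The subfamily $\{f_i\mid T_i\notin V\}$ remains quasi-equigenerated with the same $\alpha\in\mathbb{N}_{\ge 1}^n$, so Proposition~\ref{defidealhomog} applies and gives that this defining ideal is $(0)$ or generated by binomials.

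For the converse implications I would exploit that $F(I)\subseteq R$ is finely multigraded with one-dimensional multigraded pieces, and $J$ is a multigraded ideal since it is generated by the monomial elements $\{f_j\mid T_j\in V\}$. A term-matching argument --- comparing monomials in an $F(I)$-linear combination $\sum g_j f_j$ whose sum is a single monomial $x^\mathbf{n}$ --- shows that a monomial $x^\mathbf{n}\in F(I)$ lies in $J$ if and only if $x^\mathbf{n}=f_j\cdot h$ for some $T_j\in V$ and some monomial $h\in F(I)$. Since the natural surjection $A:=k[f_i\mid T_i\notin V]\twoheadrightarrow F(I)/J$ is multigraded and $A$ has one-dimensional multigraded pieces, the failure of (3) is then equivalent to the existence of a relation
\[
f_{i_1}\cdots f_{i_r}=f_j\,f_{p_1}\cdots f_{p_s}\quad\text{in } R,\qquad T_{i_1},\ldots,T_{i_r}\notin V,\ T_j\in V.
\]

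Given such a relation, both converses follow by contrapositive. The quasi-equigenerated hypothesis forces $\bar f_{i_\ell}\neq 0$ in $F(I)/J$ for every $\ell$: any equation $f_{i_\ell}=f_{j'}\cdot g$ with $T_{j'}\in V$ and $g$ a monomial of $F(I)$ would require $d_\alpha(g)=0$ and hence $g=1$, forcing $f_{i_\ell}=f_{j'}$ and contradicting distinctness of the generators. Yet $\bar f_{i_1}\cdots\bar f_{i_r}=\overline{f_j f_{p_1}\cdots f_{p_s}}=0$ exhibits zero divisors in $F(I)/J$, contradicting $(1)$; and the binomial $T_{i_1}\cdots T_{i_r}-T_j T_{p_1}\cdots T_{p_s}\in\mathcal{I}$ has image $T_{i_1}\cdots T_{i_r}$ modulo $\mathcal{J}$, producing a pure monomial in $(\mathcal{I}+\mathcal{J})/\mathcal{J}$ and so contradicting $(2)$. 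The main obstacle I anticipate is the term-matching lemma: although intuitive, its proof requires careful handling of possible cancellations in an $F(I)$-linear combination before one can isolate a generator $f_j$ ($T_j\in V$) that divides the surviving monomial.
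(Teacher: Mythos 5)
Your argument is correct, but it takes a genuinely different route from the paper's. The paper proves the cycle $(1)\Rightarrow(2)\Rightarrow(3)\Rightarrow(1)$, working throughout with the presentation $F(I)=S/\mathcal{I}$: for $(1)\Rightarrow(2)$ it looks at the image in $S/\mathcal{J}$ of each binomial generator $\alpha-\beta$ of $\mathcal{I}$ and uses primality of $J$ to exclude the case where exactly one of $\alpha,\beta$ lies in $\mathcal{J}$; for $(2)\Rightarrow(3)$ it identifies $(\mathcal{I}+\mathcal{J})/\mathcal{J}$ with the kernel of $k[T_i\mid T_i\notin V]\rightarrow k[f_i\mid T_i\notin V]$. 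You instead make $(3)$ the hub: you prove $(3)\Rightarrow(1)$ and $(3)\Rightarrow(2)$ directly, and you derive both converses from one structural fact established inside the toric ring $F(I)\cong k[f_1,\dots,f_t]$ with its fine $\mathbb{Z}^n$-grading, namely that a monomial of $F(I)$ lies in $J$ if and only if it factors as $f_j\cdot h$ with $T_j\in V$ and $h$ a monomial of $F(I)$; failure of $(3)$ then produces a relation $f_{i_1}\cdots f_{i_r}=f_jf_{p_1}\cdots f_{p_s}$ with all $T_{i_\ell}\notin V$ and $T_j\in V$, which refutes $(1)$ (each $f_{i_\ell}\notin J$ by your $d_\alpha$ argument --- or simply because no generator properly divides another minimal generator --- while their product lies in $J$) and refutes $(2)$ (its image is a pure monomial in $(\mathcal{I}+\mathcal{J})/\mathcal{J}$). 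What your organization buys is exactly the point the paper glosses over: in the paper's $(1)\Rightarrow(2)$ step the assertion that the elements $\rho(T_i)=f_i$ with $T_i\notin V$ are not in $J$ is precisely your term-matching lemma, so your version is more self-contained; the paper's cycle is shorter and never needs the multigraded bookkeeping. Two small steps you should spell out in a final write-up: first, to conclude that a nonzero monomial in $(\mathcal{I}+\mathcal{J})/\mathcal{J}$ contradicts $(2)$, note that an ideal generated by pure-difference binomials contains no monomial (evaluate all variables at $1$); this is legitimate here because, as in Proposition \ref{defidealhomog}, the binomials in play are differences of monomials, whereas for binomials with arbitrary coefficients the claim fails. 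Second, your $(3)\Rightarrow(2)$ step and your reformulation of ``failure of $(3)$'' both read the isomorphism in $(3)$ as the natural map $\overline{T_i}\mapsto f_i$, which is how the paper proves and intends it, but it is worth stating explicitly.
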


\begin{proof}
 $(1)\Rightarrow (2)$:  Let $\psi:S\rightarrow S/\mathcal{J}$ and $\rho: S\rightarrow S/\mathcal{I}$ be the natural maps.  By Proposition \ref{defidealhomog}, $\mathcal{I}$ is generated by binomials.  Let $\alpha-\beta$ be a binomial in $\mathcal{I}$ with $\alpha, \beta$ monomials.  If  $\psi(\alpha-\beta)\neq 0$, then either one or both of $\alpha,\beta$ are not in $\mathcal{J}$. %elements of the ideal $(\mathbf{T}\setminus V)$.  
 If this happen for both, %$\alpha$ and $\beta$, %are contained in $(\mathbf{T}\setminus V)$,
 then $\psi(\alpha-\beta)$ is a binomial in $S/\mathcal{J}$.  Suppose $\alpha \not \in \mathcal{J}$, and $\beta\in \mathcal{J}$. %$\alpha \in (\mathbf{T}\setminus V)$, and $\beta\in (V)=\mathcal{J}$.  
 Since $\rho(\mathcal{J})=J$, we know that $\rho(\beta)\in J$ and, since $0 = \rho(\alpha-\beta)\in J$, also $\rho(\alpha)\in J$.  But $\rho(\alpha)$ is a product of elements of the form $\rho(T_i)$ where $T_i\not\in V$, and such elements are not in $J$, a contradiction.

$(2)\Rightarrow (3)$  Consider the map $\phi:S/\mathcal{J}\rightarrow k[f_i \mbox{ | } T_i\not\in V]$ given by $\overline{T_i}\mapsto f_i$.  Since $S/\mathcal{J}\cong k[\mathbf{T}\setminus V]$ we identify $S/\mathcal{J}$ with its isomorphic subring in $S$ and view each nonzero  $\overline{T_i}\in S/\mathcal{J}$ as $T_i\in S$.  We know that $\ker\phi$ consists of all polynomials in $T_i$ corresponding to relations in $\{f_i \mbox{ | } T_i\not\in V\}$.  Such elements of $S$ are contained in $(\mathcal{I}+\mathcal{J})/\mathcal{J}$, so $\ker(\phi)\subseteq\mathcal{I}$. Let $g$ be a nonzero generator of $(\mathcal{I}+\mathcal{J})/\mathcal{J}$.  Then $g$ is a binomial and $g\in\mathcal{I} \setminus \mathcal{J}.$  %on $\mathbf{T}\setminus V$ and $g\in\mathcal{I}$.  
Since $\mathcal{I}$ is generated by binomials corresponding to the binomial relations on $\{f_1,\dots,f_t\}$, $g$ corresponds to a binomial relation on $\{f_i \mbox{ | } T_i\not\in V\}$.  Therefore $g\in \ker(\phi)$, and thus $(\mathcal{I}+\mathcal{J})/\mathcal{J}=\ker(\phi)$.

$(3)\Rightarrow (1)$:  This is clear since $k[f_i \mbox{ | } T_i \not \in V]$ is an integral domain.
\end{proof}

\medskip

For a quasi-equigenerated monomial ideal, it has been shown in \cite[Theorem 1.1]{HHZ} that $$  \mu(I^2) \geq l(I)\mu(I) - \binom{l(I)}{2}.
$$ %This is a consequence of a classic theorem of additive number theory proved by Freiman, and hence 
We will refer to this fact as the \it Freiman inequality. \rm
%This inequality has been extended to the quasi-equigenerated case in \cite[Theorem 1.1]{HHZ}. In the same paper, a quasi-equigenerated monomial ideal for which 
When the equality holds in the Freiman inequality, $I$ is called a \it Freiman ideal. \rm
Quasi-equigenerated monomial ideals of linear type are always Freiman ideals for which $l(I)=\mu(I)$.

Freiman ideals are characterized by the following properties. A very interesting fact obtained as a consequence, is that, if an an ideal is Freiman, the number of generators of any power can be exactly computed in term of $\mu(I)$ and $l(I)$.

\begin{thm} \rm  \cite[Theorem 1.3]{HHZ} \it
\label{herzogthm}
Let $I$ be a quasi-equigenerated monomial ideal. The following assertions are equivalent:
\begin{enumerate}
    \item $I$ is a Freiman ideal.
    \item $\mu(I^j)= \binom{l(I)+j-2}{j-1}\mu(I)-(j-1)\binom{l(I)+j-2}{j}$ for every $j \geq 1.$
    \item $\mu(I^j)= \binom{l(I)+j-2}{j-1}\mu(I)-(j-1)\binom{l(I)+j-2}{j}$ for some $j \geq 2.$
    \item The Hilbert polynomial of $F(I)$ is linear.
    \item $F(I)$ has minimal multiplicity.
    \item $F(I)$ is Cohen-Macaulay and its defining ideal $\mathcal{I}$ has a 2-linear free resolution.
\end{enumerate}
\end{thm}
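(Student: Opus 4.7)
The plan is to translate all six conditions into statements about the standard graded $k$-algebra $A := F(I) \cong k[f_1,\ldots,f_t]$, which by Proposition \ref{domain} is an integral domain. Setting $h := \mu(I)$ and $d := l(I) = \dim A$, one has $H_A(j) = \mu(I^j)$ for all $j \geq 0$, and $A \cong S/\mathcal{I}$ for $S = k[T_1,\ldots,T_h]$ by Proposition \ref{defidealhomog}. Under this dictionary, the theorem becomes a chain of classical equivalences about the Hilbert function, multiplicity and minimal free resolution of a standard graded $k$-algebra.

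I would first handle the block (4) $\Leftrightarrow$ (5) $\Leftrightarrow$ (6) using the theory of minimal multiplicity for Cohen-Macaulay graded algebras. The Abhyankar inequality gives $e(A) \geq h - d + 1$; assuming $A$ is Cohen-Macaulay, equality (minimal multiplicity) is equivalent to an Artinian reduction of $A$ being concentrated in degrees $\leq 1$, so that $H_A(t) = (1 + (h-d)t)/(1-t)^d$. Extracting the coefficient of $t^j$ from this rational function gives exactly the formula appearing in (2), yielding (5) $\Rightarrow$ (2). The equivalence (5) $\Leftrightarrow$ (6) is the Eisenbud--Goto characterization: a graded $k$-algebra has minimal multiplicity together with the Cohen-Macaulay property if and only if its defining ideal admits a $2$-linear minimal free resolution. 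The implications (2) $\Rightarrow$ (3) $\Rightarrow$ (4) then follow either trivially or by an $h$-vector rigidity argument: once a single value $\mu(I^j)$ with $j \geq 2$ matches the minimal-multiplicity formula, the whole Hilbert polynomial is forced to be linear of the stated form, and linearity of $P_A$ feeds back into minimal multiplicity.

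The main obstacle is the implication (1) $\Rightarrow$ (5), since the Freiman equality controls only $H_A(2)$ and one must conclude from that single datum both the Cohen-Macaulay property of $A$ and the minimal multiplicity. This cannot be done on Hilbert-function grounds alone, and the argument must invoke the equality case of Freiman's theorem from additive combinatorics: minimal doubling of the exponent set of the generators of $I$ forces these exponents to lie on a $d$-dimensional lattice arithmetic progression. Through the toric presentation of $A$ provided by Proposition \ref{defidealhomog}, this combinatorial rigidity translates into an explicit description of the binomial ideal $\mathcal{I}$ whose minimal free resolution is $2$-linear, yielding condition (6) directly and hence (5). Pushing through this combinatorial-to-algebraic step, which requires the sharp equality case of Freiman's inequality rather than just its general form, is the substantive content of the theorem carried out in \cite{HHZ}.
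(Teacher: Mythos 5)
The paper offers no proof of this statement: it is quoted verbatim from \cite[Theorem 1.3]{HHZ} and used as a black box, so there is no internal argument to compare yours against. On its own terms, your sketch gets the routine part of the architecture right: identifying $\mu(I^j)$ with the Hilbert function of the domain $F(I)$ (via Propositions \ref{domain} and \ref{defidealhomog}), reading condition (4) as saying the numerator of the Hilbert series is the linear polynomial $1+(h-d)t$, extracting (2) by expanding $(1+(h-d)t)/(1-t)^d$, and invoking the Eisenbud--Goto theory for (5) $\Leftrightarrow$ (6) are all correct, and are exactly the ingredients the surrounding paper itself leans on (e.g.\ in Theorem \ref{matrix}).

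The gap is that the two implications carrying all the content --- (1) $\Rightarrow$ (5), and the passage from (3) (equality at a single $j\geq 2$) back to minimal multiplicity --- are asserted rather than proved. For (1) $\Rightarrow$ (5) you claim one ``must invoke the equality case of Freiman's theorem,'' with minimal doubling forcing the exponent set onto a $d$-dimensional lattice progression whose ``combinatorial rigidity translates into an explicit description of the binomial ideal'' with a $2$-linear resolution. That sentence is a placeholder for precisely the step that needs proving: there is no off-the-shelf structure theorem for sets attaining $|2X|=(d+1)|X|-\binom{d+1}{2}$ that you state or could directly convert into a statement about minimal free resolutions, and this is not the mechanism of the known proof. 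The argument in \cite{HHZ} is algebro-geometric, not additive-combinatorial: since $F(I)$ is a non-degenerate toric \emph{domain}, the minimal possible value of $H_{F(I)}(2)$ forces $\operatorname{Proj} F(I)$ to be an irreducible non-degenerate variety of minimal degree $h-d+1$, and the del Pezzo--Bertini classification (equivalently the Eisenbud--Goto analysis of $2$-regularity) then yields Cohen--Macaulayness and the $2$-linear resolution. Likewise, your ``$h$-vector rigidity argument'' for (3) $\Rightarrow$ (4) needs the Freiman-type lower bound for \emph{all} powers (\cite[Proposition 1.8]{Herz}) plus a genuine argument that equality at one $j\geq2$ propagates down to $j=2$; neither is supplied. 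As written, the proposal reduces the theorem to two unproven claims, one of which points at the wrong toolbox.
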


\subsection{Cover ideals of graphs}
In the following we always consider simple finite graphs without multi-edges or loops.

\begin{definition}
\rm Let $G$ be a graph with vertex set $V=\{x_1,\dots,x_n\}$ and edge set $E$.  %The cover ideal of $G$ is defined to be \[J(G):=\bigcap_{\{x_i,x_j\}\in E}(x_i,x_j) \subseteq R.\]
For $m \geq 1$, we say that a monomial $g \in R$ is an $m$-cover of $G$ if for every edge $\{x_i,x_j\}\in E$, there exists one monomial of the form $h_{ij}= x_i^a x_j^b$ with $a+b \geq m$ such that $h_{ij}$ divides $g$.
We say that a vertex $m$-cover is \textit{minimal} if it is not divisible by any other different vertex $m$-cover.
\end{definition}

The generators of $J(G)$ are the monomials which correspond to the minimal vertex 1-covers of $G$. For any $m\in\mathbb{N}$, the symbolic power \[J(G)^{(m)}=\bigcap_{\{x_i,x_j\}\in E}(x_i,x_j)^m\] is generated by the monomials corresponding to the minimal vertex $m$-covers of $G$ while the ordinary power $J(G)^m$ is generated by the monomials corresponding to vertex $m$-covers which decompose into the product of $m$ vertex 1-covers.    %Thus, in the context of cover ideals, $\sdefect(J(G),m)$ counts the number of minimal vertex $m$-covers of $G$ which cannot be decomposed as a product of $m$ vertex 1-covers. We call such $m$-covers \textit{indecomposable}.

\begin{definition}
Given a graph $G$ with $V(G)=\{x_1,\dots,x_n\}$, an \it independent set \rm of $G$ is a subset $U \subseteq V$ such that for every $x_i, x_j \in U$, $\{ x_i, x_j\} \not \in E(G).$ We denote by $c(G)$ the \it independence number \rm of $G$, that is the maximal cardinality of an independent set of $G$. An independent set of $G$ is said \it maximal \rm if it is not contained in any other independent set. %and by $d(G)$ the number of all the independent sets of $G$ containing at least two elements.
 \end{definition}
 
 \begin{remark} \label{indipcover}
 A set $U \subseteq V$ is an independent set of $G$ if and only if the monomial $h_U= \prod_{x_i \not \in U}x_i$ is a 1-cover of $G$ and is maximal if and only if $h_U$ is a minimal 1-cover.
 \end{remark}

 \medskip
 
 We recall the following well-known notation.
Given a graph $G=(V,E)$ and a vertex $x$, the set of \textit{neighbors} of  $x$ is the set $\N(x)$ containing all the vertices $x_j$ adjacent to $x$ (i.e. $\lbrace x_i, x_j \rbrace$ is an edge of $G$). 
We recall that the \it degree \rm of a vertex is the number of adjacent vertices and a vertex of degree 1 is called a \it leaf. \rm
A graph on $n$ vertices such that each vertex has degree $n-1$ is called \it complete. \rm
Given a set of vertices $U \subseteq V$ in $G$, the \textit{induced subgraph} on $U$ is the graph with vertex set $U$, and edge set $\{\{x,y\}\in E|x,y\in U\}$.

\begin{definition} \label{equidef}
We say that the vertices $x$ and $y$ of a graph $G$ are \it equivalent \rm if $\N(x)=\N(y).$ \\
Given two graphs $G_1=(V_1, E_1), G_2=(V_2,E_2)$ we say $G_1 \leq^{\star} G_2$ if $V_1 \subseteq V_2$, $G_1$ is the induced subgraph of $G_2$ on $V_1$, and every vertex $x \in V_2 \setminus V_1$ is equivalent to a vertex in $V_1$ as vertices of $G_2.$ 
The relation $\leq^{\star}$ defines a partial order and we %denote by Min$(\leq^{\star})$
say that a minimal elements with respect to it is a \it reduced \rm graph. A reduced graph has no equivalent vertices.
 \end{definition}

 \begin{example}
In the 4-cycle $C_4$ the pairs of opposite vertices are equivalent.
%\begin{center}
%\begin{tikzpicture}
%\draw[fill] (0,1.5) circle [radius=.06];
%\node at (-.2,1.5) {a};
%\draw[fill] (0,0) circle [radius=.06];
%\node at (-.2,0) {b};
%\draw[fill] (1.5,0) circle [radius=.06];
%\node at (1.7,0) {c};
%\draw [fill] (1.5,1.5) circle [radius=.06];
%\node at (1.7,1.5) {d};
%\draw[thick] (0,0)--(1.5,0);
%\draw[thick] (1.5,0)--(1.5,1.5);
%\draw[thick] (0,0)--(0,1.5);
%\draw[thick] (0,1.5)--(1.5,1.5);
%\end{tikzpicture}

\begin{center}
\begin{tikzpicture}
\begin{scope}[every node/.style={fill=white,circle,thick,draw}]
    \node(A) at (-1,1) {$a$};
    \node(B) at  (1,1) {$b$};
    \node(C) at  (1,-1) {$c$};
    \node(D) at (-1,-1) {$d$};
\end{scope}
\begin{scope}
            [every edge/.style={draw=black,very thick}]
    \path[-](A)edge node {} (B);
    \path[-](B) edge node {} (C);
    \path[-](C) edge node {} (D);
    \path[-](D) edge node {} (A);
\end{scope}
\end{tikzpicture}
\end{center}
Vertices $a$ and $c$ are equivalent, as are vertices $b$ and $d$.
\end{example}

 Now, we show that in order to study cover ideals we can reduce to consider reduced graphs.
 
 \begin{lem} 
\label{equivalent} 
 Let $G$ be a graph, $I$ its cover ideal, and let $x,y$ be two equivalent vertices of $G$. For any minimal $m$-cover $f$ of $G$, $x^a$ divides $f$ if and only if $y^a$ divides $f$.
\end{lem}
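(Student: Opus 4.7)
The plan is to argue by contradiction, exploiting the symmetry of the hypothesis $\N(x)=\N(y)$. By this symmetry it suffices to prove the implication that $x^a\mid f$ forces $y^a\mid f$. Let $\alpha$ (resp.\ $\beta$) denote the exponent of $x$ (resp.\ $y$) in $f$, and suppose for contradiction that $\alpha\geq a$ but $\beta<a$, so in particular $\alpha>\beta$ and $\alpha\geq 1$. The goal is to show that the proper divisor $f/x$ is itself an $m$-cover of $G$, which would contradict the minimality of $f$.

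A preliminary observation is that $x$ and $y$ cannot be adjacent in $G$. Otherwise $y\in\N(x)=\N(y)$, which would amount to a loop at $y$ and contradict the standing assumption that $G$ is simple. Consequently every edge incident to $x$ has the form $\lbrace x, x_j\rbrace$ with $x_j\notin\lbrace x,y\rbrace$, and symmetrically for edges incident to $y$.

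The bulk of the argument is a case analysis over the edges of $G$, verifying the $m$-cover condition for $f/x$. Edges disjoint from $\lbrace x,y\rbrace$ pose no difficulty, as every relevant exponent is unchanged, so any $f$-witness for the cover still works. The same holds for edges $\lbrace y, x_j\rbrace$, since the exponent of $y$ is untouched in passing from $f$ to $f/x$. The only nontrivial case is an edge $e=\lbrace x, x_j\rbrace$, where the exponent of $x$ has dropped from $\alpha$ to $\alpha-1$. Here I would use that $x_j\in\N(x)=\N(y)$, so the edge $\lbrace y, x_j\rbrace$ also lies in $G$, and hence $f$ admits a witness $y^p x_j^q$ with $p+q\geq m$ and necessarily $p\leq\beta$. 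Thus $q\geq m-\beta$, and combining this with $\alpha-1\geq\beta$ yields that $x^{\alpha-1}x_j^q$ divides $f/x$ with exponent sum at least $m$, producing the required witness for $e$.

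The main obstacle is exactly this last case: trading the original $\lbrace x, x_j\rbrace$-witness of $f$ for a new one built out of the parallel $\lbrace y, x_j\rbrace$-witness. The trade is legitimate precisely because the contradiction hypothesis $\alpha\geq a>\beta$ supplies the slack $\alpha-1\geq\beta$, i.e., enough remaining $x$-weight in $f/x$ to absorb the lost unit. Once this case is handled, $f/x$ is an $m$-cover properly dividing $f$, contradicting the minimality of $f$, and the lemma follows.
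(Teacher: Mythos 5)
Your proof is correct and is essentially the contrapositive of the paper's argument: the paper uses minimality to extract a neighbor $z$ of $x$ whose power $z^{m-a}$ divides $f$ tightly and then reads off $y^a\mid f$ from the edge $\{y,z\}$, while you assume the $y$-exponent is smaller and use the $\{y,x_j\}$-witnesses to show $f/x$ is still an $m$-cover, contradicting minimality. The key ingredients (minimality of $f$ together with $\N(x)=\N(y)$) are the same, just run in the opposite direction, and your version makes explicit the existence-of-a-tight-neighbor step that the paper leaves implicit.
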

 
\begin{proof}
Let $f$ be a minimal $m$-cover of $G$ and let $a \geq 0$ be the largest power such that $x^a$ divides $f$. Hence there exists $z \in \N(x)=\N(y)$ such that $z^{m-a}$ divides $f$ and $z^{m-a+1}$ does not divide $f$. If follows that $y^a$ divides $f$.
\end{proof} 

\begin{thm}
\label{sdefequal}
Let $G_1 \leq^{\star} G_2$ be two graphs and let $I_1$ and $I_2$ be their cover ideals. Then their fiber cones $F(I_1)$ and $F(I_2)$ are isomorphic. %$$F(I_1) \cong F(I_2).$$ 
\end{thm}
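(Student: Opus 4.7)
The plan is to exhibit an explicit bijection between the minimal monomial generators of $I_1^m$ and of $I_2^m$ for every $m \geq 1$ and to verify that this bijection respects products, thereby inducing a graded $k$-algebra isomorphism between the fiber cones. Write $V_2 = V_1 \sqcup W$, where each $y \in W$ is equivalent in $G_2$ to a unique vertex $\pi(y) \in V_1$. First I would observe that two equivalent vertices are never adjacent (otherwise $y \in \N(x) = \N(y)$, contradiction), so $G_1$ is truly the subgraph of $G_2$ induced on $V_1$. A short maximality argument then shows that equivalent vertices lie together in, or together out of, every maximal independent set of $G_2$; combined with Remark \ref{indipcover}, this yields a bijection $U_1 \mapsto U_1 \cup \{y \in W : \pi(y) \in U_1\}$ between maximal independent sets of $G_1$ and of $G_2$, and correspondingly a bijection between the minimal $1$-covers $f_1,\dots,f_t$ of $G_1$ and the minimal $1$-covers $g_1,\dots,g_t$ of $G_2$, where $g_i = f_i \cdot \prod_{y \in W,\ \pi(y)\mid f_i} y$.

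Taking products along any multi-index $(i_1,\dots,i_m)$ one obtains $\prod_k g_{i_k} = \bigl(\prod_k f_{i_k}\bigr) \cdot \prod_{y \in W} y^{e(\pi(y))}$, where $e(x)$ denotes the exponent of $x \in V_1$ in $\prod_k f_{i_k}$. Hence each product $\prod_k g_{i_k}$ is completely determined by its $V_1$-restriction $\prod_k f_{i_k}$, and conversely. Two such products agree in $k[V_2]$ if and only if the corresponding products agree in $k[V_1]$, so the map $\Theta: \prod_k f_{i_k} \mapsto \prod_k g_{i_k}$ is a well-defined bijection between monomials of these forms, and it is manifestly multiplicative since exponents simply add in both the $V_1$-part and the $W$-part.

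The hard part will be verifying that $\Theta$ preserves the property of being a \emph{minimal} generator, i.e., that $\prod_k f_{i_k}$ is a minimal generator of $I_1^m$ if and only if $\prod_k g_{i_k}$ is a minimal generator of $I_2^m$. The idea is to analyze, for $z \in V_2$ dividing $\prod_k g_{i_k}$, when the quotient $\bigl(\prod_k g_{i_k}\bigr)/z$ lies in $I_2^m$; writing a potential witness as $\prod_k g_{j_k}$ and restricting to $V_1$ (comparing the $y$-exponent of each $g_{j_k}$ with the $\pi(y)$-exponent of $f_{j_k}$) reduces this to the condition $\bigl(\prod_k f_{i_k}\bigr)/z \in I_1^m$ when $z \in V_1$, and to $\bigl(\prod_k f_{i_k}\bigr)/\pi(z) \in I_1^m$ when $z \in W$. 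Once this correspondence of minimality is in place, $\Theta$ restricts to a bijection between the $k$-bases of $F(I_1)_m$ and $F(I_2)_m$ (the minimal generators of $I_1^m$ and of $I_2^m$); multiplicativity then extends $\Theta$ $k$-linearly to a graded $k$-algebra isomorphism $F(I_1) \cong F(I_2)$. Equivalently, the presentations $k[T_1,\dots,T_t]/\mathcal{I}_j$ of the two fiber cones share the same defining ideal under the variable identification $f_i \leftrightarrow g_i$.
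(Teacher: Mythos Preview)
Your argument is correct and is essentially the paper's approach spelled out in detail: the key observation that equivalent vertices carry identical exponents in every minimal cover is exactly Lemma~\ref{equivalent}, and your bijection $\Theta$ is what the paper encodes in one stroke via the change of variables $T:=x_iy$ (after reducing inductively to the case $|W|=1$). The minimality check you outline is precisely what justifies that this substitution identifies the two fiber cones and not merely the two generating sets; the paper leaves that implicit.
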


\begin{proof}
Let $V_1= \lbrace x_1, \ldots, x_n \rbrace$ be the set of vertices of $G_1.$ We may assume the set of vertices of $G_2$ to be $V_2= \lbrace x_1, \ldots, x_n, y \rbrace$ (if there are more vertices it is possible to iterate the same argument of this proof).
Let $I_2 \subseteq k[x_1, \ldots, x_n, y]$ be the cover ideal of $G_2$. Since $G_1 \leq^{\star} G_2$, there exists $x_i \in V_1$ equivalent to $y$. The thesis follows by Lemma \ref{equivalent} via the change of variables $T:=x_iy$.
\end{proof}

\begin{remark} \label{samesdefect}
 The ideals $I_1$ and $I_2$ of the preceding theorem share many properties. For instance, for every $m \geq 1$, $\mu(I_1^m)=\mu(I_2^m)$, $\mu(I_1^{(m)})=\mu(I_2^{(m)})$ and sdef$(I_1,m)=$sdef$(I_2,m)$. The quantity sdef$(I,m)$ is called \it symbolic defect \rm and it is defined as the minimal number of generators of the module $\frac{I^{(m)}}{I^m}$. The paper \cite{DG} is devoted to the study of symbolic defect of cover ideals of graphs.
 \end{remark}

\section{Quasi-equigenerated cover ideals}

In this section, we approach the question of understanding which cover ideals of graphs are quasi-equigenerated.
 
%Let $G=(V,E)$ be a graph of $n$ vertices and let $I=(f_1, \ldots, f_t)$ be its cover ideal. %For $\alpha= (a_1, \ldots, a_n) \in \mathbb{N}_{\geq 1}^n$, we define $$d_{\alpha}(f_j)= \sum_{x_i | f_j} a_i.$$
%The ideal $I$ is quasi-equigenerated if and only if there exists $\alpha \in \mathbb{N}_{\geq 1}^n$ with nonzero components, such that, for every $j,k$, $$ d_{\alpha}(f_j)= d_{\alpha}(f_k). $$
Easy computations allow to observe that the cover ideal of any graph with 3 or 4 vertices is quasi-equigenerated while
the unique graph with 5 vertices and non-quasi-equigenerated cover ideal is the path $P_5$ (see below). 
\begin{center}
\begin{tikzpicture}
\begin{scope}[every node/.style={fill=white,circle,thick,draw}]
    \node(A) at (-4,0) {$x_1$};
    \node(B) at  (-2,0) {$x_2$};
    \node(C) at  (0,0) {$x_3$};
    \node(D) at (2,0) {$x_4$};
    \node (E) at (4,0) {$x_5$};
    %\node (F) at (0.75,-1) {$x_6$};
\end{scope}
\begin{scope}
            [every edge/.style={draw=black,very thick}]
    \path[-](A)edge node {} (B);
    \path[-](B) edge node {} (C);
   %\path[-](A) edge node {} (E);
    %\path[-](D) edge node {} (F);
    \path[-](D) edge node {} (C);
    \path[-](D) edge node {} (E);
    %\path[-](A) edge node {} (C);
    %\path[-](A) edge node {} (F);
    %\path[-](B) edge node {} (D);
    %\path[-](B) edge node {} (E);
    %\path[-](C) edge node {} (F);
    %\path[-](D) edge node {} (E);
\end{scope}
\end{tikzpicture}
\end{center}
Indeed the cover ideal of $P_5$ is $$J(P_5)= (x_1x_3x_5, x_1x_3x_4,  x_2x_4, x_2x_3x_5)$$ and, assuming the existence of $\alpha \in \mathbb{N}_{\geq 1}^n$ such that,  $ d_{\alpha}(f_j)= d_{\alpha}(f_k)$, we get the relations $a_4 = a_5$, $a_4 = a_3 + a_5$ deriving the contradiction $a_3=0.$

\begin{example}
Let $G$ be a graph whose cover ideal $I$ is generated in only two different degrees $m_1 < m_2.$ Assume that there exists $x_i$ dividing all the minimal 1-covers of degree $m_1$ but not dividing any of the minimal 1-covers of degree $m_2$. It follows that $I$ is quasi-equigenerated setting $a_i=m_2-m_1+1$ and $a_k=1$ for $k \neq i$.
 \end{example}

In general it seems not easy to find an exact classification of all the graphs having quasi-equigenerated cover ideal. We provide here several criteria and the complete characterization for some families of graphs. First we observe that for this study we can only consider graphs reduced in the sense of Definition \ref{equidef}. For a graph $G=(V,E)$ and a vertex $x \in V$, we denote by $G\setminus \{x\}$ the induced subgraph on $V\setminus \{x\}$.

\begin{lem}
\label{reduced}
Let $G$ be a $n$-vertex graph with equivalent vertices $x_n$ and $x_{n-1}$.  Then $J(G)$ is quasi-equigenerated if and only if $J(G\setminus \{x_n\})$ is quasi-equigenerated. 
\end{lem}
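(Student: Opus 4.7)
The plan is to classify the minimal 1-covers of $G$ according to their behavior at the equivalent pair $x_{n-1}, x_n$. By Lemma \ref{equivalent} applied with $m=1$, every minimal generator of $J(G)$ is either divisible by both $x_{n-1}$ and $x_n$ (call these \emph{Type A}, written $x_{n-1}x_n h$ with $h \in k[x_1,\dots,x_{n-2}]$) or divisible by neither (\emph{Type B}, written $h'$ with $h' \in k[x_1,\dots,x_{n-2}]$). The central structural step is to match these to the minimal generators of $J(G') := J(G \setminus \{x_n\})$: the map $x_{n-1}x_n h \mapsto x_{n-1} h$ should be a bijection between Type A generators of $J(G)$ and minimal 1-covers of $G'$ containing $x_{n-1}$, while Type B generators should coincide with minimal 1-covers of $G'$ not containing $x_{n-1}$ (which necessarily contain $\N(x_{n-1}) = \N(x_n)$ in their support).

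Once the bijection is set up, for the forward direction, suppose $\alpha = (a_1,\dots,a_n) \in \N_{\geq 1}^n$ witnesses quasi-equigeneration of $J(G)$ in common degree $d$. I would define $\alpha' = (a_1,\dots,a_{n-2}, a_{n-1}+a_n) \in \N_{\geq 1}^{n-1}$ and verify: a Type A$'$ generator $x_{n-1} h$ of $J(G')$ has $\alpha'$-degree $(a_{n-1}+a_n) + d_\alpha(h) = d$, matching the value on its Type A preimage, and a Type B$'$ generator $h'$ has $\alpha'$-degree $d_{\alpha'}(h') = d_\alpha(h') = d$. For the backward direction, given $\alpha' = (a_1,\dots,a_{n-1})$ witnessing quasi-equigeneration of $J(G')$ in degree $d$, I would rescale $\alpha'$ by $2$ (which preserves quasi-equigeneration, now in degree $2d$) to guarantee that the last entry is at least $2$, then set $\alpha = (a_1,\dots,a_{n-2}, a_{n-1}-1, 1) \in \N_{\geq 1}^n$; Type A generators $x_{n-1}x_n h$ acquire $\alpha$-degree $a_{n-1} + d_{\alpha'}(h) = d$, and Type B generators are unchanged.

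The main obstacle I expect is verifying the bijection rigorously, since minimality of a 1-cover is a global condition. Concretely, one has to check in each direction that removing a variable from the candidate cover breaks coverage: in a Type A generator $x_{n-1}x_n h$ of $G$, the presence of $x_n$ forces the existence of some $x_j \in \N(x_n)$ with $x_j \nmid h$, and this same $x_j$ is what forces $x_{n-1}$ in $x_{n-1}h$ to be nonremovable in $G'$; conversely, in a Type B cover $h'$ of $G'$, the edges $\{x_{n-1},x_j\}$ force $\N(x_{n-1}) \subseteq \operatorname{supp}(h')$, which is exactly what makes $h'$ cover the edges $\{x_n, x_j\}$ in $G$. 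Once these case-checks are done, the weight translation is bookkeeping, with the only subtlety being the rescaling needed in the backward direction so that $a_{n-1}$ splits as a sum of two positive integers.
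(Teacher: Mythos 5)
Your proposal is correct and follows essentially the same route as the paper: both directions are handled by the same weight-vector translations (merging $a_{n-1}+a_n$ in one direction, and doubling the weights so that the last entry can be split into two positive parts in the other — the paper uses $(2a_1,\dots,2a_{n-2},a_{n-1},a_{n-1})$ where you use $(2a_1,\dots,2a_{n-2},2a_{n-1}-1,1)$, an immaterial difference). Your explicit verification of the Type A/Type B bijection between minimal covers of $G$ and of $G\setminus\{x_n\}$ is a point the paper passes over quickly via Lemma \ref{equivalent}, and your sketch of it is sound.
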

\begin{proof}
Suppose that $J(G\setminus \{x_n\})=(f_1,\dots,f_s)$ is quasi-equigenerated.  Then there exists some $\alpha=(a_1,\dots,a_{n-1})\in\mathbb{N}_{\geq 1}^{n-1}$ such that $d_\alpha(f_i)=d_\alpha(f_j)$ for all $1\leq i,j\leq s$.  Let $\beta \in \mathbb{N}_{\geq 1}^n$ be defined by $(2a_1,\dots,2a_{n-2},a_{n-1},a_{n-1})$.  We know by Lemma \ref{equivalent} that each generator of $J(G)$ is of the form $x_n^{t_i}f_i$ for some $1\leq i\leq s$ where $t_i$ is the highest power of $x_{n-1}$ dividing $f_i$ ($t_i \in \lbrace 0,1 \rbrace$).  Thus $d_{\beta}(x_n^{t_i}f_i)=a_{n-1}t_i + 2d_{\alpha}(f_i)-a_{n-1}t_i = 2d_{\alpha}(f_i)$ and therefore $J(G)$ is quasi-equigenerated. %Thus $d_{\beta}(x_nf_i)=2(d_{\alpha}(f_i)-a_{n-1}t)+2a_{n-1}t$ where $t$ is the highest power of $x_{n-1}$ dividing $f_i$.  Therefore $J(G)$ is quasi-equigenerated.

Suppose $J(G)=(g_1,\dots,g_s)$ is quasi-equigenerated. We know by Lemma \ref{equivalent} that $J(G\setminus\{x_n\})=(\frac{g_1}{x_n^{t_1}},\dots,\frac{g_s}{x_n^{t_s}})$ where for each $i$, $t_i$ is the highest power of $x_n$ dividing $g_i$.  Since $J(G)$ is quasi-equigenerated, there exists some $\alpha=(a_1,\dots,a_n) \in \mathbb{N}_{\geq 1}^n$ such that $d_\alpha(g_i)=d_\alpha(g_j)$ for all $1\leq i,j\leq s$.  Let $\beta=(a_1,\dots,a_{n-1}+a_n)$.  Then $d_\beta %\left( \frac{g_i}{x_n^{t_i}} \right) =d_\alpha(g_i)$ for all $i$, and hence $J(G\setminus\{x_n\})$ is quasi-equigenerated.
(\frac{g_i}{x_n^{t_i}}) =d_\alpha(g_i)$ for all $i$, and hence $J(G\setminus\{x_n\})$ is quasi-equigenerated.
\end{proof}

%\bf Question: characterize the graph having quasi-equigenerated cover ideal \rm
The cover ideals of graphs of independence number two are always quasi-equigenerated. 

\begin{prop}
\label{quasic2} Let $G$ be a graph such that $c(G)=2$ and let $I=J(G)$ be its cover ideal. Then $I$ is quasi-equigenerated. %As in Lemma \ref{freimanlemma}, let $\mathcal{I}$ be the defining ideal of the fiber cone $F(I)$. Let $t:=\mu(I)$, $a:=t-l(I)$ and let $b$ be the number of minimal generators of $\mathcal{I}$ of degree $2$.
 %The following assertions hold: \begin{enumerate}
 %\item $I$ is a Freiman ideal if and only if $\mathcal{I}$ is principal and generated in degree at most 2 \rm(\it i.e. $b=a \in \lbrace 0,1 \rbrace$\rm).
%\end{enumerate}
\end{prop}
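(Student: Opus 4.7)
The plan is to use Remark \ref{indipcover} to describe the generators of $J(G)$ explicitly via maximal independent sets of $G$, then produce an explicit weight vector $\alpha \in \mathbb{N}_{\geq 1}^n$ that works. Since $c(G)=2$, every maximal independent set of $G$ has cardinality $1$ or $2$, so each minimal generator of $J(G)$ has standard degree $n-1$ or $n-2$. The core idea is that these two degrees differ by exactly $1$, so it suffices to give extra weight to the vertices that appear in the shorter generators.

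First I would classify the maximal independent sets. A singleton $\{x_i\}$ is a maximal independent set precisely when $x_i$ is adjacent to every other vertex, i.e., when $x_i$ is \emph{universal}; call $U_1$ the set of universal vertices and $U_2$ its complement in $V$. A two-element set $\{x_i,x_j\}$ is independent iff $\{x_i,x_j\}\notin E$, and (since $c(G)=2$) any such independent pair is automatically maximal. The key elementary observation is that a universal vertex cannot belong to any two-element independent set: if $x_i\in U_1$ then $x_i$ is adjacent to every other vertex, so no pair containing $x_i$ is independent. Thus every size-$2$ maximal independent set lies entirely inside $U_2$.

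Next I would define the weight vector by
\[
\alpha_i \;=\; \begin{cases} 2 & \text{if } x_i \in U_1, \\ 1 & \text{if } x_i \in U_2, \end{cases}
\]
which clearly satisfies $\alpha\in \mathbb{N}_{\geq 1}^n$. Set $T = \sum_{k=1}^n \alpha_k$. For a size-$1$ maximal independent set $\{x_i\}$ (with $x_i$ universal), the corresponding generator is $h_{\{x_i\}}=\prod_{k\neq i}x_k$, whose $\alpha$-degree is $T - \alpha_i = T - 2$. For a size-$2$ maximal independent set $\{x_i,x_j\}$ (with $x_i,x_j \in U_2$ by the observation above), the corresponding generator $h_{\{x_i,x_j\}} = \prod_{k\neq i,j}x_k$ has $\alpha$-degree $T - \alpha_i - \alpha_j = T - 1 - 1 = T - 2$. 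Since both types of generators share the common weighted degree $T-2$, the ideal $J(G)$ is quasi-equigenerated.

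There is no real obstacle here beyond correctly identifying the generators and verifying that size-$2$ maximal independent sets avoid $U_1$; the construction of $\alpha$ is then immediate.
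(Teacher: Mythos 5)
Your proof is correct and follows essentially the same route as the paper: the paper also identifies the vertices of degree $n-1$ (your universal vertices), assigns them weight $2$ and all other vertices weight $1$, and checks that both types of generators then have the same weighted degree. The only cosmetic difference is that the paper treats the case of no universal vertices separately (where the ideal is in fact equigenerated), while your weight vector handles it uniformly.
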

 
 \begin{proof}
 After relabeling the vertices, let $ x_1, \ldots, x_c $ be the vertices of $G$ of maximal degree and let $F=x_1x_2 \cdots x_n$ be the product of all the variables. The minimal 1-covers of $G$ are of the form $Fx_i^{-1}$ for $i \leq c$ and $F(x_lx_j)^{-1}$ where $\lbrace x_l, x_j \rbrace$ is an independent set of $G$ of cardinality two and $j,l > c$. In the case there are no vertices of $G$ having maximal degree, then each vertex is contained in an independent set of cardinality two, and therefore $I$ is equigenerated. Otherwise, set $a_i=2$ for $i \leq c$ and $a_i=1$ for $i > c$. Hence, setting $\alpha=(a_1, a_2, \ldots, a_n)$, we get for $i \leq c$, $$ d_{\alpha}(Fx_i^{-1})= 2(c-1)+(n-c) $$ and for $j,l > c$, $$ d_{\alpha}(F(x_lx_j)^{-1})= 2c + (n-c-2). $$ This implies $I$ quasi-equigenerated. 
\end{proof}

Next result characterizes graphs with equigenerated cover ideals in term of independent sets. %A \it clique \rm of a graph is an induced complete subgraph, and it is maximal if it is not contained in larger cliques. The \it complementary graph \rm of a graph $G=(V,E)$ is the graph $\overline{G}$ whose set of vertices is $V$ and the set of edges is the set $$ \lbrace \{x_i,x_j\} \not \in E \rbrace. $$

\begin{prop} 
\label{equigen} 
Let $G=(V,E)$ be a graph of $n$ vertices and let $I$ be its cover ideal. The following conditions are equivalent:
\begin{enumerate}
\item $I$ is equigenerated.
\item All the maximal independent sets of $G$ have the same cardinality.
%\item All the maximal cliques of $\overline{G}$ have the same cardinality.
\end{enumerate}
\end{prop}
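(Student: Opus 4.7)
The plan is to use Remark \ref{indipcover}, which gives a bijective correspondence between maximal independent sets $U \subseteq V$ and minimal $1$-covers of $G$, given by $U \mapsto h_U = \prod_{x_i \notin U} x_i$. Since the minimal generators of $I=J(G)$ are exactly these monomials $h_U$, understanding the degrees of the generators reduces to understanding the cardinalities of the maximal independent sets.

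First I would note that for any maximal independent set $U$, the degree of the corresponding generator is $\deg(h_U) = n - |U|$, because $h_U$ is the product over all vertices not in $U$. This is the only computation needed.

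For $(2) \Rightarrow (1)$: if every maximal independent set has the same cardinality $c$, then every minimal generator of $I$ has degree $n-c$, so $I$ is equigenerated. Conversely, for $(1) \Rightarrow (2)$: if $I$ is equigenerated of degree $d$, then for every maximal independent set $U$ we have $n - |U| = d$, forcing $|U| = n-d$ to be constant across all maximal independent sets.

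I do not anticipate any obstacle here: once the translation in Remark \ref{indipcover} is in hand, the statement is a one-line degree count. The entire proof should fit in a few lines.
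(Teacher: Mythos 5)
Your proof is correct and follows exactly the route the paper intends: the paper's own proof is the one-line remark that the statement is a straightforward consequence of the definitions and Remark \ref{indipcover}, and your degree count $\deg(h_U)=n-|U|$ is precisely the computation that makes this explicit. Nothing to add.
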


\begin{proof}
It is a straightforward consequence of the definitions and of Remark \ref{indipcover}. %We observe that the equivalence between (1) and (3) was already proved in \bf cite\rm.
\end{proof} 

%\begin{prop} 
%\label{condition} 
%Let $G=(V,E)$ be a graph of $n$ vertices and let $I$ be its cover ideal. Assume $H$ to be the induced subgraph of $G$ on a set $U \subseteq V$ such that: \begin{enumerate}
%\item[(a)] for each vertex $ x \in U$, there exists another vertex $y \in U$ such that $(x,y) \not \in E$.
%\item[(b)] for each vertex $ x \in V \setminus U$ the induced subgraph on $U \setminus \N(x)$ is complete.
%\end{enumerate}
%Then, if $I$ is quasi-equigenerated, the cover ideal $J(H)$ of $H$ is also quasi-equigenerated.
%\end{prop}

%\begin{proof}
%Let $q = \prod_{x_i \in V \setminus U} x_i$. We show that for every minimal 1-cover $h$ of $H$, $hq$ is a minimal 1-cover of $G$. This will imply that if $J(H)$ is not quasi-equigenerated, then so it is $I$. Let $x \in V \setminus U$ and show that $hqx^{-1}$ is not a 1-cover of $G$. Indeed, $hqx^{-1}$ may be a 1-cover of $G$ if and only if $(x,y) \not \in E$ for every $y$ not dividing $h$. Assumption (a) implies that there are at least two vertices $y,z$ not dividing $h$, and hence if both $(x,y),(x,z) \not \in E$, by assumption (b), $(y,z) \in E$, contradicting the fact that $h$ is a 1-cover.
%\end{proof} 

%(Notation: $\N(x_i)$ is the set of neighbors of $x_i$)

Next lemma describes a useful way to detect non-quasi-equigenerated cover ideals considering the cover ideals of particular induced subgraphs.

\begin{definition} \label{gi}
Let $G=(V,E)$ be a graph of $n$ vertices and let $x_i \in V$. We call $G_i$ the induced subgraph on the set $V \setminus (\N(x_i) \cup \lbrace x_i\rbrace)$.
 \end{definition}

\begin{lem} 
\label{quasi1} 
Let $G=(V,E)$ be a graph of $n$ vertices and let $I=(f_1, \ldots, f_t)$ be its cover ideal. %Call $G_i$ the induced subgraph on the set $V \setminus (\N(x_i) \cup \lbrace x_i\rbrace$. 
Then if $I$ is quasi-equigenerated (resp. equigenerated), the cover ideal $J(G_i)$ is quasi-equigenerated (resp. equigenerated) for every $i$.
\end{lem}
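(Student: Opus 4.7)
The plan is to reduce the quasi-equigeneratedness of $J(G_i)$ to that of $J(G)$ by exhibiting a one-to-one correspondence between minimal generators of $J(G_i)$ and the minimal generators of $J(G)$ that do not involve the variable $x_i$, and then simply restricting the weight vector.

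First I would spell out the combinatorial bijection. Using Remark \ref{indipcover}, minimal $1$-covers of $G$ (resp.\ $G_i$) correspond to maximal independent sets of $G$ (resp.\ $G_i$). I would observe that, since $V(G_i)=V\setminus(\N(x_i)\cup\{x_i\})$ contains no neighbor of $x_i$, for any independent set $U$ of $G_i$ the set $U\cup\{x_i\}$ is independent in $G$, and conversely every maximal independent set of $G$ containing $x_i$ arises uniquely this way. Moreover, $U$ is maximal in $G_i$ iff $U\cup\{x_i\}$ is maximal in $G$. Passing to the corresponding covers and letting $M:=\prod_{y\in\N(x_i)} y$, this bijection becomes
\[
\{\text{min.\ generators of }J(G_i)\}\;\longleftrightarrow\;\{\text{min.\ generators of }J(G)\text{ not divisible by }x_i\},\qquad h\longmapsto M\cdot h.
\]
Note that the right-hand side is nonempty: $\{x_i\}$ is always contained in some maximal independent set of $G$.

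Next I would use the weight vector. Suppose $J(G)$ is quasi-equigenerated with respect to some $\alpha=(a_1,\dots,a_n)\in\mathbb{N}_{\geq 1}^n$, so that $d_\alpha(f)=d$ for every minimal generator $f$ of $J(G)$. Let $\alpha'\in\mathbb{N}_{\geq 1}^{|V(G_i)|}$ be the restriction of $\alpha$ to the indices corresponding to the vertices of $G_i$; since $\alpha'$ inherits the strict positivity condition, it is an admissible weight vector for $J(G_i)$. For any minimal generator $h$ of $J(G_i)$, the monomial $Mh$ is a minimal generator of $J(G)$, so
\[
d_{\alpha'}(h)\;=\;d_\alpha(Mh)-d_\alpha(M)\;=\;d-d_\alpha(M),
\]
which is independent of $h$. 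Hence $J(G_i)$ is quasi-equigenerated. For the parenthetical equigenerated case, I would take $\alpha=(1,\dots,1)$, so that $\alpha'=(1,\dots,1)$ and the argument goes through verbatim (equivalently, all minimal covers of $G$ have the same total degree, hence so do the factors $h$).

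The only step needing care — but not hard — is the bijection of covers: one must check that $U$ maximal in $G_i$ forces $U\cup\{x_i\}$ maximal in $G$. This is immediate because any vertex $y\in V\setminus(U\cup\{x_i\})$ nonadjacent to every element of $U\cup\{x_i\}$ is, in particular, nonadjacent to $x_i$, hence lies in $V(G_i)$, contradicting maximality of $U$ inside $G_i$. With the bijection in place, the weight-restriction argument completes the proof.
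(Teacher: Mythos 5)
Your proof is correct and follows essentially the same route as the paper: both identify the minimal generators of $J(G_i)$ with the minimal generators of $J(G)$ not divisible by $x_i$ via multiplication by the product of the neighbors of $x_i$, and then restrict the weight vector $\alpha$ to the vertices of $G_i$. The only difference is that you justify the correspondence carefully through maximal independent sets (Remark \ref{indipcover}), a detail the paper's proof asserts without elaboration.
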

 
\begin{proof}
%A minimal 1-covers of $G$ not divisible by $x_i$ is of the form $f_j=hh_j$ where $h$ is the product of the neighbors of $x_i$ and $h_j$ is a minimal 1-cover of $G_i$.
Let $h_j$ be a minimal 1-cover $h_j$ of $G_i$ and let $h$ be the product of the neighbors of $x_i$ in $G$. Hence $f_j=hh_j$ is a minimal 1-cover of $G$ and the minimal 1-covers of $G$ not divisible by $x_i$ are all of this form. Let $V \setminus (\N(x_i) \cup \lbrace x_i\rbrace)= \lbrace x_{i_1}, \ldots, x_{i_c} \rbrace.$ It follows that, if $I$ is quasi-equigenerated with $\alpha= (a_1, \ldots, a_n)$, $J(G_i)$ is quasi-equigenerated with $(a_{i_1}, \ldots, a_{i_c})$. 
\end{proof} 

The converse of this result is not true, since one can see that the cover ideal of the 6-cycle $J(C_6)$ is not quasi equigenerated, but $J((C_6)_i)$ is quasi-equigenerated for every of its vertices $x_i$. In Theorem \ref{quasicirculant}, we explicitly characterize which circulant graphs have quasi-equigenerated cover ideals.

\medskip

%\begin{tikzpicture}
%\filldraw[color=red!60, fill=red!5, very thick] (-1,0) circle (1.5);
%\filldraw[color=blue!60, fill=blue!5, very thick] (7,0) circle (1.5);
%\node[above, black] at (-1,0) {$G_1$};
%\node[above, black] at (7,0) {$G_2$};
%\draw[fill] (0,0) circle [radius=0.06];
%\draw[thick] (0,0) -- (6,0);
%\draw[fill] (1.5,0) circle [radius=.06];
%\draw[fill] (3,0) circle [radius=.06];
%\draw[fill] (4.5,0) circle [radius= .06];
%\draw[fill](6,0) circle [radius=.06];
%\end{tikzpicture}

\begin{remark} \label{criterion}
Lemma \ref{quasi1} gives rise to a criterion for non-quasi-equigeneratedness.  Let $G$ be a graph containing an induced subgraph $A$ such that $J(A)$ is non-quasi-equigenerated.
\begin{center}
\begin{tikzpicture}
\filldraw[color=red!60, fill=red!5, very thick] (-.8,0) circle (1.1);
\filldraw[color=blue!60, fill=blue!5, very thick] (3.8,0) circle (1.1);
\node[above, black] at (-1,0) {$A$};
\node[above, black] at (4,0) {$B$};
\node[above, black] at (1.5,-1) {$G$};
\draw[fill] (0,0) circle [radius=0.06];
\node[above,black] at (0,0) {$x_1$};
\draw[thick] (0,0) -- (3,0);
\draw[fill] (1.5,0) circle [radius=.06];
\node[above, black] at (1.5,0) {$x_2$};
\draw[fill] (3,0) circle [radius=.06];
\node[above,black] at (3,0) {$x_3$};
\end{tikzpicture}
%\begin{center}
 %   $G$
%\end{center}
%$G$
\end{center}
\
If there exist vertices $x_1,x_2,x_3$ with $x_1\in V(A)$, such that the induced subgraph on $\{x_1,x_2,x_3\}$ is $P_3$, then the graph $G_3$ (obtained following Definition \ref{gi}) will have $A$ as a connected component with non-quasi-equigenerated cover ideal.
\begin{center}
\begin{tikzpicture}
\filldraw[color=red!60, fill=red!5, very thick] (-.8,0) circle (1.1);
\filldraw[color=blue!60, fill=blue!5, very thick] (3.8,0) circle (1.1);
\node[above, black] at (-1,0) {$A$};
\node[above, black] at (3.8,-.2) {$B\setminus\mathcal{N}(x_3)$};
\node[above, black] at (1.4,-1) {$G_3$};
\draw[fill] (0,0) circle [radius=0.06];
\node[above,black] at (0,0) {$x_1$};
\end{tikzpicture}
\end{center}
%\begin{center}
 %   $G_3$
%\end{center}
Thus by Lemma \ref{quasi1}, $J(G)$ is not quasi-equigenerated.
\end{remark}

We describe which trees have quasi-equigenerated cover ideal. We recall that a \it tree \rm is a graph not containing any induced cyclic subgraph.

\begin{thm}
\label{Trees}
Let $T$ be a tree.  $J(T)$ is quasi-equigenerated if and only if every vertex of degree at least $2$ is adjacent to a leaf.
\end{thm}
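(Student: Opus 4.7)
The theorem is a biconditional, so I prove the two directions separately.

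For the sufficiency ($\Leftarrow$), the plan is to exhibit explicit positive integer weights making every minimal $1$-cover of $T$ have the same $\alpha$-weighted degree. I set $a_\ell=1$ for each leaf $\ell$ and $a_v=|L(v)|$ for each internal vertex $v$, where $L(v)$ is the set of leaf neighbors of $v$; the hypothesis ensures $a_v\geq 1$. The crucial dichotomy is that for any minimal cover $C$ and any internal $v$, either $v\in C$ (in which case each $\ell\in L(v)$ has its unique edge already covered by $v$, so $\ell\notin C$ by minimality) or $v\notin C$ (in which case $L(v)\subseteq C$ to cover the edges $\{v,\ell\}$). In both cases the contribution of $\{v\}\cup L(v)$ to $d_\alpha(f_C)$ equals $|L(v)|$, and summing over internal vertices (and using that each leaf belongs to exactly one $L(v)$) gives a constant equal to the total number of leaves.

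For the necessity ($\Rightarrow$), I argue by contrapositive: suppose $v$ is internal with no leaf neighbor and assume toward contradiction that $J(T)$ is quasi-equigenerated with weights $\alpha=(a_1,\dots,a_n)$. Every neighbor of $v$ has degree $\geq 2$, so I pick $w_1,w_2\in N(v)$ and $u_i\in N(w_i)\setminus\{v\}$; by the tree property $u_1\neq u_2$ and $P=\{u_1,w_1,v,w_2,u_2\}$ induces a $P_5$ in $T$. Writing $F_p$ for the union of components of $T\setminus P$ attached (through a single edge, by acyclicity) to $p\in P$, I extend each of the four minimal covers $A_1=\{w_1,w_2\}$, $A_2=\{u_1,v,w_2\}$, $A_3=\{w_1,v,u_2\}$, $A_4=\{u_1,v,u_2\}$ of the induced $P_5$ to a minimal $1$-cover $C_i=A_i\cup B_i$ of $T$. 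For each $p\in P$, $B_i\cap V(F_p)$ is a minimal subset covering all $F_p$-edges that additionally contains every contact vertex of $p$ when $p\notin A_i$ and excludes them when $p\in A_i$; both such ``in'' and ``out'' choices exist by standard arguments on trees.

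Let $s_p^{\mathrm{in}}$, $s_p^{\mathrm{out}}$ be the $\alpha$-weights of the corresponding choices and $\delta_p=s_p^{\mathrm{in}}-s_p^{\mathrm{out}}$. Expanding $d_\alpha(C_i)=d_\alpha(C_1)$ for $i=2,3,4$ and eliminating $a_{u_i}-a_{w_i}$ using the cases $i=2,3$, the case $i=4$ collapses to the single relation $a_v=\delta_v$. For the punchline, observe that since $v\in A_2$, no cross-edge constraint forces the contact vertices of $v$ into $B_2$, so both the ``in'' and ``out'' choices for $B_2\cap V(F_v)$ produce minimal $1$-covers of $T$ when combined with the rest of $C_2$. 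Quasi-equigeneratedness forces these two covers to have equal $\alpha$-weight, hence $s_v^{\mathrm{in}}=s_v^{\mathrm{out}}$, i.e., $\delta_v=0$ (this is also trivial when $\deg v=2$, since then $F_v=\emptyset$). Combining gives $a_v=0$, contradicting $a_v\geq 1$.

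The main technical obstacle will be verifying that each $C_i$ is genuinely a minimal cover of $T$. For every vertex of $A_i$ this follows from a short inspection of the four $A_i$: each $A_i$-vertex has at least one $P$-neighbor outside $A_i$, so one of its $P$-edges is covered only by it. For vertices of $B_i$, non-redundancy requires that each contact vertex forced into $B_i$ by a cross-edge constraint either covers some internal $F_p$-edge or is the unique cover of its cross-edge; this is precisely where the hypothesis that all neighbors of $v$ have degree $\geq 2$ is used, ensuring every relevant contact vertex in $F_v$ or $F_{w_i}$ has further neighbors inside its component and that the ``in'' and ``out'' choices on $F_v$ both yield genuine minimal covers of $T$.
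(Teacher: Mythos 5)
Your proposal is correct, but it takes a genuinely different route from the paper. The paper proves sufficiency by reducing modulo equivalent vertices (Lemma \ref{reduced}) to the case of a whiskered graph and invoking Proposition \ref{genwhisk}, and proves necessity by repeatedly passing to induced subgraphs of the form $G_i$ (Definition \ref{gi}, Remark \ref{criterion}, Lemma \ref{quasi1}) so as to isolate one of five small graphs around the offending vertex, whose cover ideals are checked by inspection not to be quasi-equigenerated. You instead argue directly: for sufficiency you exhibit an explicit weight vector ($a_\ell=1$ on leaves, $a_v=|L(v)|$ on internal vertices), and the dichotomy ``$v$ in the cover versus $L(v)$ in the cover'' makes every minimal $1$-cover have $\alpha$-weight equal to the number of leaves, which is a clean, self-contained computation not in the paper; for necessity you build four minimal covers of $T$ extending the four minimal covers of the induced $P_5$ through $v$, and the bookkeeping with $b_p=a_p-\delta_p$ correctly collapses to $a_v=\delta_v$, after which a fifth cover (toggling the contact vertices of $v$ while $v$ stays in the cover) forces $\delta_v=0$ and hence the contradiction $a_v=0$. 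What the paper's route buys is reuse of general reduction machinery that also serves elsewhere (e.g.\ circulant graphs) and a finite check; what your route buys is a self-contained argument that never needs the reduction lemmas and makes explicit where the weights break.

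One step should be sharpened. As literally defined, your ``in'' choice on a component $F$ attached to $v$ is a cover of $F$ that is minimal \emph{subject to containing} the contact vertex $c$; that suffices when $v$ is outside the cover (the cross edge is then private to $c$), but in the punchline cover, where $v$ is in the cover and $c$ is included voluntarily, such a set need not give $c$ a private edge (e.g.\ $F$ a path $c$--$d$--$e$ with the set $\{c,d\}$), so the resulting cover of $T$ need not be minimal. The fix is to take the ``in'' set to be a genuinely minimal vertex cover of $F$ that contains $c$; this exists precisely because $c$ has a neighbor inside $F$ (every neighbor of $v$ has degree at least $2$, and a second neighbor of $c$ in $P$ would create a cycle), and such a set works simultaneously in $C_1$ and in the modified $C_2$, so the cancellation giving $\delta_v=0$ is legitimate. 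You flag this as the main technical obstacle and point to the right hypothesis, so the gap is one of precision rather than of substance; with this choice spelled out, and with the same fixed ``in''/``out'' sets used across all the covers (as your $\delta_p$ bookkeeping implicitly requires), the argument is complete.
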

\begin{proof}
 Suppose that $T$ contains a vertex $v$ of degree at least $2$ which is not adjacent to any leaves.  Then there exist vertices $a,b,x,y$ in $T$ such that $(a,b)$, $(b,v)$, $(v,x)$, and $(x,y)$ are edges in $T$.  Let $H$ be the set of all leaves adjacent to $\{a,b,x,y\}$, and consider the induced subgraph $S$ of $T$ on $\{a,b,v, x,y\}\cup H$. Let $x_{1}, \ldots, x_{s}$ be the vertices of $T$ having exactly distance two from at least one vertex among $\{a,b,v, x,y\}$. Following the notation of Definition \ref{gi}, set $G^1:=T_{1}$ and for $j=2, \ldots s$, set $G^j:=G_{j}^{j-1}$ (clearly $x_{1}, \ldots, x_{s}$ are pairwise not adjacent since $T$ is a tree and hence $x_{j}$ is a vertex of $G^{j-1}$). The last graph obtained with this process is $G^s=S$. By iterated applications of Remark \ref{criterion} to the the graphs $G^{j}$ we get that, if $J(S)$ is not quasi-equigenerated then also $J(T)$ is not quasi-equigenerated.  %where $x_{i_j}$ are the vertices having distance two from one vertex among $\{a,b,v, x,y\}$.  
 By Lemma \ref{reduced} we may assume that $S$ has no two equivalent vertices.  The assumption of $v$ not adjacent to any leaf implies that $S$ is one of the five following graphs: 
\begin{center}
\begin{tabular}{ccccc}
    \begin{tikzpicture}
\draw[fill](-2,0) circle [radius=.06];
\draw[fill] (-1,0) circle [radius=.06];
\draw[fill] (0,0) circle [radius=.06];
\draw[fill] (1,0) circle [radius=.06];
\draw [fill] (2,0) circle [radius=.06];
\draw[thick] (-2,0)--(2,0);
\end{tikzpicture} & \qquad \qquad & \begin{tikzpicture}
\draw[fill](-2,0) circle [radius=.06];
\draw[fill] (-1,0) circle [radius=.06];
\draw[fill] (0,0) circle [radius=.06];
\draw[fill] (1,0) circle [radius=.06];
\draw [fill] (2,0) circle [radius=.06];
\draw[thick] (-2,0)--(2,0);
\draw[fill](-2,1) circle[radius=.06];
\draw[thick] (-2,0)--(-2,1);
\end{tikzpicture} & \qquad \qquad &\begin{tikzpicture}
\draw[fill](-2,0) circle [radius=.06];
\draw[fill] (-1,0) circle [radius=.06];
\draw[fill] (0,0) circle [radius=.06];
\draw[fill] (1,0) circle [radius=.06];
\draw [fill] (2,0) circle [radius=.06];
\draw[thick] (-2,0)--(2,0);
\draw[fill](-1,1) circle[radius=.06];
\draw[thick] (-1,0)--(-1,1);
\draw[fill](-2,1) circle[radius=.06];
\draw[thick] (-2,0)--(-2,1);
\end{tikzpicture}\\
\end{tabular}
\end{center}
\begin{center}
\begin{tabular}{ccc}
    \begin{tikzpicture}
\draw[fill](-2,0) circle [radius=.06];
\draw[fill] (-1,0) circle [radius=.06];
\draw[fill] (0,0) circle [radius=.06];
\draw[fill] (1,0) circle [radius=.06];
\draw [fill] (2,0) circle [radius=.06];
\draw[thick] (-2,0)--(2,0);
\draw[fill](-1,1) circle[radius=.06];
\draw[thick] (-1,0)--(-1,1);
\draw[fill](-2,1) circle[radius=.06];
\draw[thick] (-2,0)--(-2,1);
\draw[fill](1,1) circle[radius=.06];
\draw[thick] (1,0)--(1,1);
\draw[fill](2,1) circle[radius=.06];
\draw[thick] (2,0)--(2,1);
\end{tikzpicture} & \qquad\qquad&\begin{tikzpicture}
\draw[fill](-2,0) circle [radius=.06];
\draw[fill] (-1,0) circle [radius=.06];
\draw[fill] (0,0) circle [radius=.06];
\draw[fill] (1,0) circle [radius=.06];
\draw [fill] (2,0) circle [radius=.06];
\draw[thick] (-2,0)--(2,0);
\draw[fill](2,1) circle[radius=.06];
\draw[thick] (2,0)--(2,1);
\draw[fill](-2,1) circle[radius=.06];
\draw[thick] (-2,0)--(-2,1);
\end{tikzpicture}
\end{tabular}
\end{center}

By inspection, with the help of Lemma \ref{quasi1}, we get that the cover ideals of these five graphs are not quasi-equigenerated and hence also $J(T)$ is not quasi-equigenerated.

Suppose every degree 2 vertex of $T$ is adjacent to a leaf.  By Lemma \ref{reduced}, we may assume $T$ to be reduced in the sense of Definition \ref{equidef}. %has no two equivalent vertices.  
Thus every vertex of $T$ is either a leaf or adjacent to exactly one leaf.  Therefore $T$ is a whiskered  graph. We prove that cover ideals of whiskered graphs are equigenerated in Proposition \ref{genwhisk}.
\end{proof}

\subsection{Quasi-equigenerated circulant graphs}

 It is easy to observe that complete graphs have equigenerated cover ideal and that the cyclic graph $C_n$ has equigenerated cover ideal only for $n=3,4,5,7$.
 A natural generalization of complete graphs and cyclic graphs is given by the class of circulant graphs. Results about ideals related to circulant graphs are given for instance in \cite{Rin}, \cite{VVW}, \cite{EVV}. Here we are interested in characterizing which circulant graph has quasi-equigenerated cover ideal.

\begin{definition} 
\label{circ}
\rm Let $n$ be a positive integer and let $1 \leq s \leq \lfloor{n/2} \rfloor$. Denote by $\Z_n$ the cyclic group with $n$ elements let $S= \lbrace 1,2, \ldots s \rbrace \subseteq \Z_n.$

The circulant graph $C_n(1,2, \ldots, s)$ is defined as the graph with vertex set $\lbrace x_1, \ldots, x_n \rbrace$ and with edge set formed by the edges $\lbrace x_i, x_{i+j} \rbrace$ such that $j \in \lbrace \pm 1, \pm 2, \ldots, \pm s \rbrace$ with the sums taken modulo $n$. %for $i > j$, $\lbrace x_i, x_j \rbrace$ is an edge if and only if $i-j \in S$.
%Denote by $\Z_n$ the cyclic group with $n$ elements and by $\langle s \rangle$ the subgroup of $\Z_n$ generated by the class of s.
Note that the cycle $C_n$ is equal to $C_n(1)$ and the complete graph $K_n$ is $C_n(1,2, \ldots, \lfloor{n/2} \rfloor)$.  \end{definition}

The graph in the next picture is the circulant graph $C_6(1,2)$.
\begin{center}
\begin{tikzpicture}
\begin{scope}[every node/.style={fill=white,circle,thick,draw}]
    \node(A) at (0,2) {$x_1$};
    \node(B) at  (1.75,1) {$x_2$};
    \node(C) at  (1.75,-1) {$x_3$};
    \node(D) at (0,-2) {$x_4$};
    \node (E) at (-1.75,1) {$x_6$};
    \node (F) at (-1.75,-1) {$x_5$};
\end{scope}
\begin{scope}
            [every edge/.style={draw=black,very thick}]
    \path[-](A)edge node {} (B);
    \path[-](B) edge node {} (C);
   \path[-](A) edge node {} (E);
    \path[-](D) edge node {} (F);
    \path[-](D) edge node {} (C);
    \path[-](F) edge node {} (E);
    \path[-](A) edge node {} (C);
    \path[-](A) edge node {} (F);
    \path[-](B) edge node {} (D);
    \path[-](B) edge node {} (E);
    \path[-](C) edge node {} (F);
    \path[-](D) edge node {} (E);
\end{scope}
\end{tikzpicture}
\end{center}
%Now characterize which circulant graphs have cover ideals with $\sdefect(I,2)=1$ and compute $\sdefect(I,m)$ in such cases.
Our strategy is to apply Lemma \ref{quasi1} and consider the induced subgraphs of circulant graphs of the form $G_i$. For this purpose, we need to study the cover ideals of the family of graphs we now introduce:
 
 \begin{definition}
Given two positive integers $n,s \geq 1$, we define the graph $P_{(n,s)}= (V,E)$ where $V=\lbrace x_1, \ldots, x_n \rbrace$ and $$E:= \lbrace (x_i, x_j) \, | \, |i-j| \leq s \rbrace.$$
Notice that for $s=1$, $P_{(n,1)}= P_n$ is the path on $n$ vertices.
 \end{definition}
 
 \begin{prop} 
\label{equipath} 
The cover ideal of the graph $P_{(n,s)}$ is equigenerated if and only if either $n \leq s+1$ or $n=2s+2.$
\end{prop}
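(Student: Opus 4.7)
By Proposition \ref{equigen}, $J(P_{(n,s)})$ is equigenerated if and only if all maximal independent sets of $P_{(n,s)}$ have the same cardinality, where a subset $\{x_{i_1}<\cdots<x_{i_k}\}$ is independent precisely when $i_{j+1}-i_j\geq s+1$ for every $j$. I would split the argument along the two ranges of $n$ predicted to be equigenerated and the two complementary ranges.

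For sufficiency, the case $n\leq s+1$ is trivial since the graph is complete and all maximal independent sets are singletons. For $n=2s+2$, I would first rule out singleton maximal sets: each $x_i$ has a non-neighbor, namely $x_{i+s+1}$ if $i\leq s+1$ and $x_{i-s-1}$ otherwise. Then the chain $(k-1)(s+1)\leq i_k-i_1\leq n-1=2s+1<2(s+1)$ forces $k\leq 2$, so every maximal independent set has size exactly $2$.

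For the intermediate range $s+2\leq n\leq 2s+1$, the same bound still forces $k\leq 2$; the pair $\{x_1,x_n\}$ is independent (as $n-1\geq s+1$) and hence maximal of size $2$, while any $x_i$ with $i$ in the non-empty range $[n-s,\,s+1]$ is adjacent to every other vertex, giving a singleton maximal set. Sizes $1$ and $2$ therefore coexist.

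The main work is the range $n\geq 2s+3$, where the goal is to produce maximal independent sets of two different sizes. My approach uses the arithmetic progressions
\[
A=\{x_{1+j(s+1)}:0\leq j\leq \lfloor(n-1)/(s+1)\rfloor\},\qquad B=\{x_{j(s+1)}:1\leq j\leq \lfloor n/(s+1)\rfloor\},
\]
both of common difference $s+1$ and both maximal, since no vertex can be inserted before the first element, after the last element, or inside a gap of length exactly $s+1$. Writing $n=q(s+1)+r$ with $0\leq r\leq s$, one has $|B|=q$ and $|A|=q+1$ whenever $r\geq 1$, which settles all residue classes except $r=0$. The main obstacle is precisely this exceptional case, i.e., $n=p(s+1)$ with $p\geq 3$ (the hypothesis $n\geq 2s+3$ forces $p>2$), where $|A|=|B|=p$. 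To overcome it I would introduce
\[
C=\{x_{j(s+1)}:1\leq j\leq p-2\}\cup\{x_{(p-1)(s+1)+1}\},
\]
which replaces the last step of $B$ by a single jump of length $s+2$. A direct check shows $C$ is maximal: nothing fits before $x_{s+1}$, nothing fits inside a gap of length $s+1$, the only candidate $x_{(p-1)(s+1)}$ in the enlarged final gap lies at distance $1$ from $x_{(p-1)(s+1)+1}$, and nothing fits after $x_{(p-1)(s+1)+1}=x_{n-s}$. Thus $|C|=p-1\neq p=|A|$, which completes the proof.
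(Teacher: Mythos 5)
Your proof is correct, but for the hard range $n\geq 2s+3$ it takes a genuinely different route from the paper. You work entirely with Proposition \ref{equigen}, exhibiting explicit maximal independent sets of different cardinalities: the two arithmetic progressions $A$ and $B$ of step $s+1$ settle every $n$ with $(s+1)\nmid n$, and the perturbed progression $C$ (one step of length $s+2$) handles the divisible case. I checked the maximality claims and the cardinality counts $|A|=q+1$, $|B|=q$ for $r\geq 1$ and $|A|=|B|=p$, $|C|=p-1$ for $r=0$; they are all right, and the bound $i_k-i_1\geq (k-1)(s+1)$ correctly caps independent sets at size $2$ in the ranges $s+2\leq n\leq 2s+2$. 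The paper instead disposes of $n\geq 2s+3$ by an inductive descent: it observes that the induced subgraphs satisfy $G_{n-j}=P_{(n-s-1-j,s)}$ and invokes Lemma \ref{quasi1} (non-equigeneratedness of some $J(G_i)$ forces non-equigeneratedness of $J(G)$), bootstrapping from the already-established failure in the window $s+2\leq n\leq 2s+1$. Your construction buys a self-contained, fully explicit argument with no iteration bookkeeping; the paper's version buys brevity by reusing Lemma \ref{quasi1}, which it needs elsewhere (notably for circulant graphs). For $n\leq s+1$ and $n=2s+2$ the two arguments are essentially the same computation seen through complementary lenses (minimal $1$-covers versus maximal independent sets).
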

 
\begin{proof}
When $n \leq s+1$, the graph is complete and hence its cover ideal is equigenerated of degree $n-1$. If $n=2s+2$ we argue in the following way: observing that $ \N(x_{s+1})= V \setminus \lbrace x_{2s+2} \rbrace $, there exists a unique minimal 1-cover of $P_{(2s+2,s)}$ not divisible by $x_{s+1}$ an it has degree $2s$. For the same reason, since $ \N(x_{s+2})= V \setminus \lbrace x_{1} \rbrace $, also the unique minimal 1-cover not divisible by $x_{s+2}$ has degree $2s$. All the remaining minimal 1-covers are of the form $x_{s+1}x_{s+2}h_1h_2$ where $h_1$ is a minimal 1-cover of the induced subgraph on $ \lbrace x_1, \ldots, x_{s} \rbrace $ and $h_2$ is a minimal 1-cover of the induced subgraph on $ \lbrace x_{s+3}, \ldots, x_{2s+2} \rbrace $. But both these induced subgraphs are complete graphs with $s$ vertices and hence $$ \deg (x_{s+1}x_{s+2}h_1h_2) = 2+ \deg(h_1)+ \deg(h_2) = 2+ 2(s-1)= 2s. $$ 
To show that all the other graphs of the family are not equigenerated first consider the case where $s+1 \leq n \leq 2s+1$.  
 If $n$ is even we have for $1 \leq j \leq n,$ $$ \left| \frac{n}{2}-j \right| \leq \left| \frac{2s+1}{2}-j \right| = \left| s+ \frac{1}{2}-j \right| \leq s $$ and thus $(x_{\frac{n}{2}}, x_j) \in E$ for every $j$. Using the symmetry of the graph, we get also $(x_{\frac{n}{2}+1}, x_j) \in E$ for every $j$. Now, the unique minimal 1-cover not divisible by $x_{\frac{n}{2}}$ has degree $n-1$ but, since the induced subgraphs on $ \lbrace x_1, \ldots, x_{\frac{n}{2}-1} \rbrace $ and on $ \lbrace x_{\frac{n}{2}+2}, \ldots, x_{n} \rbrace $ are complete (and have the same number of vertices), any minimal 1-cover divisible by $x_{\frac{n}{2}}x_{\frac{n}{2}+1}$ has degree $2+ 2(\frac{n}{2}-2)= n-2$ and therefore the cover ideal is not equigenerated. Similarly, if $n$ is odd we get $(x_{\frac{n+1}{2}}, x_j) \in E$ for every $j$ and we show that the cover ideal is not equigenerated in an analogous way. \\
 Finally we observe that if $G=P_{(n,s)}$, then, for $0 \leq j \leq s$, $$G_{n-j} = P_{(n-s-1-j,s)}.$$ It follows that by Lemma \ref{quasi1}, since $J(P_{(s+2,s)})$ is not equigenerated, then $J(P_{(s+2+(s+1+j),s)})$ is not equigenerated and therefore $J(P_{(2s+3,s)}),J(P_{(2s+4,s)}), \ldots, J(P_{(3s+2,s)})$ are not equigenerated. Since $2s+3 + (s+1+j) \geq 3s+4 $, we can iterate this last process and conclude that for $n \geq 2s+3$, $J(P_{(n,s)})$ is not equigenerated.
\end{proof}

By convention we assume the cover ideals of a graph without edges to be equigenerated.
 
\begin{thm}
\label{quasicirculant}
Let $I$ be the cover ideal of a circulant graph $G=C_n(1,2, \ldots, s)$. %Call $G_i$ the induced subgraph on the set $V \setminus (\N(x_i) \cup \lbrace x_i\rbrace)$.
 The following conditions are equivalent:
\begin{enumerate}
\item $I$ is equigenerated.
\item $I$ is quasi-equigenerated.
\item $J(G_1)$ is equigenerated.
\item Either $ s \geq \frac{n-1}{3} $ or $s=\frac{n-3}{4}.$
\end{enumerate}
\end{thm}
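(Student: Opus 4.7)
The plan is to establish the cyclic chain of implications $(1)\Rightarrow(2)\Rightarrow(3)\Rightarrow(4)\Rightarrow(1)$. The step $(1)\Rightarrow(2)$ is immediate from the definitions, and $(2)\Rightarrow(3)$ is a single application of Lemma \ref{quasi1} to $G=C_n(1,\ldots,s)$ at the vertex $x_1$ (by vertex-transitivity, any vertex works equally well).

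To handle $(3)\Leftrightarrow(4)$, the key point is to identify the subgraph $G_1$ explicitly. The neighbors of $x_1$ in $G$ are $\{x_2,\ldots,x_{s+1}\}\cup\{x_{n-s+1},\ldots,x_n\}$, so the non-neighbors form the contiguous block $\{x_{s+2},\ldots,x_{n-s}\}$ of cardinality $n-2s-1$. For any two indices $i,j$ in this block one has $|i-j|\leq n-2s-2<n-s$, hence the cyclic distance in $G$ coincides with the linear distance; consequently the induced subgraph $G_1$ is precisely $P_{(n-2s-1,s)}$. Plugging this into Proposition \ref{equipath}, $J(G_1)$ is equigenerated exactly when $n-2s-1\leq s+1$ or $n-2s-1=2s+2$, i.e.\ when $n\leq 3s+2$ or $n=4s+3$, which is the arithmetic content of condition (4).

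For $(4)\Rightarrow(1)$ I would invoke Proposition \ref{equigen} and show directly that all maximal independent sets of $G$ have the same cardinality. The main translation is that a subset $\{x_{i_1},\ldots,x_{i_r}\}$, written in cyclic order, is an independent set of $C_n(1,\ldots,s)$ iff its consecutive cyclic gaps $g_1,\ldots,g_r$ all satisfy $g_j\geq s+1$, and it is maximal iff in addition $g_j\leq 2s+1$ for every $j$, because any gap of size at least $2s+2$ admits an interior vertex at cyclic distance $\geq s+1$ from both endpoints. Since $\sum_j g_j=n$, a maximal independent set of size $r$ exists iff $r(s+1)\leq n\leq r(2s+1)$. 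It then suffices to check that condition (4) forces exactly one integer $r$ in the interval $[n/(2s+1),n/(s+1)]$: when $n\leq 3s+2$ one has $r=2$ (or $r=1$ in the degenerate complete case $n\leq 2s+1$), since $r=3$ would require $n\geq 3s+3$; when $n=4s+3$ one has $r=3$ uniquely, as $r=2$ demands $n\leq 4s+2$ and $r=4$ demands $n\geq 4s+4$. Hence $J(G)$ is equigenerated.

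The crux of the argument is the gap-sequence description of maximal independent sets, which makes the numerical condition (4) transparent; I expect this translation and the identification $G_1\cong P_{(n-2s-1,s)}$ to be the two non-bookkeeping steps, but both are short once the correct combinatorial setup is in place.
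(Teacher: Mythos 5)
Your chain breaks at $(2)\Rightarrow(3)$. Lemma \ref{quasi1} is an implication that preserves the \emph{same} property: from ``$I$ quasi-equigenerated'' it yields only ``$J(G_1)$ quasi-equigenerated,'' not ``$J(G_1)$ equigenerated,'' which is what statement (3) asserts. The upgrade from quasi-equigenerated to equigenerated is not free for the graphs $G_1\cong P_{(m,s)}$: for instance $J(P_3)=(x_2,\,x_1x_3)$ is quasi-equigenerated (weight $2$ on $x_2$, weight $1$ elsewhere) but not equigenerated, and $P_3$ is exactly $(C_6)_1$ --- the very example the paper uses after Lemma \ref{quasi1} to show the converse of that lemma fails. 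So the inference pattern you invoke is genuinely invalid, and some additional input is required. The paper supplies it by proving $(2)\Rightarrow(1)$ directly via the rotational symmetry of circulant graphs: a quasi-equigenerating weight $\alpha=(a_1,\dots,a_n)$ restricts to a quasi-equigenerating weight on each $G_i$, and since $G_{i+1}$ is the image of $G_i$ under the shift $x_j\mapsto x_{j+1}$, comparing the restricted weights forces $a_j=a_{j+1}$ for all $j$, hence $\alpha$ is constant and $I$ is equigenerated. Without this (or an equivalent symmetry argument) condition (2) is not connected to the rest of your cycle.

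The remaining pieces are fine. Your identification $G_1\cong P_{(n-2s-1,s)}$ and the appeal to Proposition \ref{equipath} for $(3)\Leftrightarrow(4)$ coincide with the paper's argument (note your computation gives $n\le 3s+2$, i.e.\ $s\ge\frac{n-2}{3}$, which differs from the stated $s\ge\frac{n-1}{3}$ exactly at the boundary $n=3s+2$ --- e.g.\ $C_5$ --- but that discrepancy is already present in the paper's own statement). Your $(4)\Rightarrow(1)$ via the gap-sequence description of maximal independent sets and Proposition \ref{equigen} is correct and is a genuinely different, more self-contained route than the paper's $(3)\Rightarrow(1)$, which instead decomposes the minimal covers missing $x_i$ as $h\cdot h_j$ with $h_j$ a cover of $G_i$ and uses vertex-transitivity. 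Your version makes the arithmetic of condition (4) transparent; the paper's version reuses Proposition \ref{equipath} and avoids the independent-set bookkeeping. Either works, but you still need to repair the $(2)\Rightarrow(3)$ (or replace it with $(2)\Rightarrow(1)$) link as above.
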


\begin{proof}
%First observe that by the symmetry of the circulant graphs, after relabeling the vertices, $G_1 = G_i$ for every $i$. \\
$(1) \Rightarrow (2)$ is trivial. \\
$(1) \Rightarrow (3)$ follows by Lemma \ref{quasi1}. \\
$(2) \Rightarrow (1)$  By Lemma \ref{quasi1}, since $I$ is quasi-equigenerated with $\alpha= (a_1, \ldots, a_n)$, $J(G_1)$ is quasi-equigenerated with $(a_{i_1}, \ldots, a_{i_c})$ correspondent to the vertices $ \lbrace x_{i_1}, \ldots, x_{i_c} \rbrace $ of $G_1$. But, by the symmetry of the circulant graphs, after relabeling the vertices, $G_1 = G_i$ for every $i$. Thus, we can set the same values $a_{i_l}$ on the vertices of $G_2$ preserving the order given to those of $G_1$. Now observe that the vertices of $G_2$ are $$\lbrace x_{i_1+1}, \ldots, x_{i_c+1} \rbrace$$ where the sums $i_l + k$ are taken modulo $n$.
Hence $ a_{i_l} = a_{i_l+1} $ for every $1 \leq l \leq c$. Using inductively this argument, we find that $a_i = a_l$ for every $i,l$ and therefore $I$ is equigenerated.\\
$(3)\Rightarrow(1)$ Any minimal 1-cover of $G$ not divisible by the variable $x_1$ is of the form $hh_j$ where $h$ is the product of the neighbors of $x_1$ and $h_j$ is a minimal 1-cover of $G_1$. Since $J(G_1)$ is equigenerated, we get these last covers are all equigenerated. Since all the vertices $x_i$, have the same number of neighbors and $G_1 = G_i$ for every $i$, we get that $I$ is equigenerated. \\
$(3) \Leftrightarrow (4)$ Observe that $G_1 = P_{(n-2s-1,s)}$ and conclude applying Proposition \ref{equipath}.
\end{proof}

\subsection{Quasi-equigenerated join of graphs}

Our next aim is to characterize how quasi-equigeneratedness of the cover ideal behaves with respect to graph operations. We consider the operation of adding edges between vertices of two graphs.

\begin{definition} \label{joint}
Let $G_1=(V_1,E_1)$ and $G_2=(V_2, E_2)$ be two graphs where $V_1= \lbrace x_1, \ldots, x_n \rbrace$ and $V_2= \lbrace y_1, \ldots, y_m \rbrace$ are two disjoint sets of vertices. Given two non empty subsets $U_1 \subseteq V_1$ and $U_2 \subseteq V_2$, we define the graph $$ G_1 \oplus_{U_1,U_2} G_2 := (V_1 \cup V_2, E_1 \cup E_2 \cup D) $$ where $$ D:= \lbrace (x_i, y_j) \mbox{ | } x_i \in U_1, y_j \in U_2 \rbrace. $$ When $U_1=V_1$ and $U_2=V_2$ we simply denote $G_1 \oplus G_2:= G_1 \oplus_{V_1,V_2} G_2.$ This last graph is sometimes called the \it join \rm of $G_1$ and $G_2$. 
 \end{definition}
%\begin{center}
 %   \begin{tikzpicture}
  %  \draw[fill](-2.5,0) circle [radius=.06];
%\draw[fill] (-1,0) circle [radius=.06];
%\draw[fill] (-1,1.5) circle [radius=.06];
%\draw[fill] (-2.5,1.5) circle [radius=.06];
%\draw[thick] (-2.5,0)--(-1,0);
%\draw[thick] (-2.5,0)--(-1,1.5);
%\draw[thick] (-2.5,1.5)--(-1,1.5);
%\draw[thick] (-1,0)--(-1,1.5);
%\draw[thick] (-1,0)--(-2.5,1.5);
%\draw[thick] (-2.5,0)--(-2.5,1.5);
%\draw[thick] (2,-.9)--(1,-.3);
%\draw[fill] (2,-.9) circle [radius=.06];
%\draw[fill] (1,-.3) circle [radius=.06];
%\draw[thick] (1,-.3)--(1.4,.9);
%\draw[thick] (1,-.3)--(2.6,.9);
%\draw[thick](1,-.3)--(3,-.3);
%\draw[fill] (1.4,.9) circle [radius=.06];
%\draw[thick] (1.4,.9)--(2.6,.9);%
%\draw[thick] (1.4,.9)--(3,-.3);
%\draw[thick](1.4,.9)--(2,-.9);
%\draw[fill] (2.6,.9) circle [radius=.06];
%\draw[thick] (2.6,.9)--(3,-.3);
%\draw[fill] (3,-.3) circle [radius=.06];
%\draw[thick] (3,-.3)--(2,-.9);
%\draw[thick](2.6,.9)--(2,-.9);
%\draw[fill] (2,-.9) circle [radius=.06];
%start the cross-edges
%\draw[thick,dashed](-1,0)--(1,-.3);
%\draw[thick,dashed](-1,0)--(2.6,.9);
%\draw[thick,dashed](-1,0)--(1.4,.9);
%\draw[thick,dashed](-1,1.5)--(1,-.3);
%\draw[thick,dashed](-1,1.5)--(2.6,.9);
%\draw[thick,dashed](-1,1.5)--(1.4,.9);
%circles
%\draw (-1,0) circle [radius=.3];
%\draw (2.6,.9) circle [radius=.3];
%\draw (1,-.3) circle [radius=.3];
%\draw (-1,1.5) circle [radius=.3];
%\draw (1.4,.9) circle [radius=.3];
 %   \end{tikzpicture}
%\end{center}

\begin{center}
    \begin{tikzpicture}
        \draw[fill](-2.5,0) circle [radius=.06];
\draw[fill] (-1,0) circle [radius=.06];
\draw[fill] (-1,1.5) circle [radius=.06];
\draw[fill] (-2.5,1.5) circle [radius=.06];

\draw[thick] (-2.5,0)--(-1,0);
\draw[thick] (-2.5,0)--(-1,1.5);
\draw[thick] (-2.5,1.5)--(-1,1.5);
\draw[thick] (-1,0)--(-1,1.5);
\draw[thick] (-1,0)--(-2.5,1.5);
\draw[thick] (-2.5,0)--(-2.5,1.5);

\draw[fill] (1,.75) circle [radius=.06];
\draw[fill] (1.88,1.96) circle [radius=.06];
\draw[fill] (1.88,-.46) circle [radius=.06];
\draw[fill] (3.31,1.5) circle [radius=.06];
\draw[fill] (3.31,0) circle [radius=.06];

\draw[thick](1,.75) --(1.88,1.96);
\draw[thick](1,.75) --(1.88,-.46);
\draw[thick](1,.75) --(3.31,1.5);
\draw[thick](1,.75) --(3.31,0);
\draw[thick](1.88,1.96) --(1.88,-.46);
\draw[thick](1.88,1.96) --(3.31,1.5);
\draw[thick](1.88,1.96) --(3.31,0);
\draw[thick](1.88,-.46)--(3.31,1.5);
\draw[thick](1.88,-.46) --(3.31,0);
\draw[thick](3.31,1.5) --(3.31,0);

\draw[thick,dashed](-1,0)--(1,.75);
\draw[thick,dashed](-1,0)--(1.88,-.46);
\draw[thick,dashed](-1,0)--(1.88,1.96);
\draw[thick,dashed](-1,1.5)--(1,.75);
\draw[thick,dashed](-1,1.5)--(1.88,-.46);
\draw[thick,dashed](-1,1.5)--(1.88,1.96);

\draw (-1,0) circle [radius=.3];    
\draw (-1,1.5) circle [radius=.3];  
\draw (1,.75) circle [radius=.3];    
\draw (1.88,-.46) circle [radius=.3];    
\draw (1.88,1.96) circle [radius=.3];
    \end{tikzpicture}
\end{center}

\begin{definition} \label{linkdef}
We say that $G_1 \oplus_{U_1,U_2} G_2$ is \it linked with covers \rm if $ p_1 = \prod_{x_i \in U_1} x_i$ and $ p_2 = \prod_{y_j \in U_2} y_j$ are respectively 1-covers of $G_1$ and $G_2$ (not necessarily minimal).
 \end{definition}
 
  \begin{prop} 
\label{linked} 
Let $G_1 \oplus_{U_1,U_2} G_2$ be linked with covers and let $I_1=(f_1, \ldots, f_s) \subseteq k[x_1, \ldots, x_n]$ and $I_2=(g_1, \ldots, g_t) \subseteq k[y_1, \ldots, y_m]$ be respectively the cover ideals of $G_1$ and $G_2$. The cover ideal $I$ of $G$ is contained in $ k[x_1, \ldots, x_n, y_1, \ldots, y_m] $ and it is generated by: $$ I = (f_1p_2, \ldots, f_sp_2, g_1p_1, \ldots, g_tp_1).$$ The monomial $p_1p_2$ is a minimal 1-cover if and only if $p_1$ and $p_2$ are both minimal 1-covers of $G_1$ and $G_2$. The others are all minimal.
\end{prop}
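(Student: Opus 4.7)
The plan is to identify the minimal $1$-covers of $G$ explicitly and then sort out which of the listed generators are themselves minimal. I would proceed in three steps: first verify that every monomial in the asserted list is a $1$-cover of $G$, then show every minimal $1$-cover of $G$ has the stated form, and finally carry out the minimality analysis.

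For the first step, $f_i p_2$ covers the edges of $G_1$ through $f_i$, the edges of $G_2$ through $p_2$ (using that $p_2$ is a $1$-cover of $G_2$ by the linked-with-covers hypothesis), and each cross-edge $\{x_k, y_l\}$ with $y_l \in U_2$ through the factor $y_l \mid p_2$; the monomial $g_j p_1$ is handled symmetrically. For the second step, let $h$ be an arbitrary minimal $1$-cover of $G$ and factor $h = h_1 h_2$ where $h_1$ is supported on the $x$-variables and $h_2$ on the $y$-variables. The cross-edges force a pigeonhole alternative: if some $x_k \in U_1$ fails to divide $h_1$, then every $y_l \in U_2$ must divide $h_2$ in order to cover $\{x_k, y_l\}$, giving $p_2 \mid h_2$; otherwise $p_1 \mid h_1$. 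Assume without loss of generality that $p_2 \mid h_2$. Since $h_1 p_2$ is already a $1$-cover of $G$ by the first step and divides $h$, minimality of $h$ yields $h_2 = p_2$; a parallel argument shows that if $h_1$ admitted a proper divisor still covering $G_1$, then $h$ itself would not be minimal, so $h_1 = f_i$ for some $i$ and $h = f_i p_2$.

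For the third step, take $f_i p_2$ with $f_i \neq p_1$ and suppose $(f_i p_2)/z$ is a $1$-cover of $G$ for some variable $z$. If $z$ is an $x$-variable then $f_i/z$ would have to cover $G_1$, contradicting the minimality of $f_i$. If $z = y_l \in U_2$ then squarefreeness of $p_2$ removes $y_l$ entirely from the $y$-part, so every cross-edge $\{x_k, y_l\}$ with $x_k \in U_1$ must be covered by $f_i$; this forces $p_1 \mid f_i$, and combined with the fact that $p_1$ is itself a $1$-cover of $G_1$ and $f_i$ is minimal, this gives $f_i = p_1$, contradicting the hypothesis. The symmetric argument handles $g_j p_1$ with $g_j \neq p_2$. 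For the monomial $p_1 p_2$, which appears in the list exactly when at least one of $p_1, p_2$ is already a minimal $1$-cover of its graph, the same reduction runs through without contradiction precisely when both are minimal; conversely, if, say, $p_2$ is not minimal and $y_l \mid p_2$ is chosen with $p_2/y_l$ still a $1$-cover of $G_2$, then $p_1 (p_2/y_l)$ is a proper divisor of $p_1 p_2$ that still covers $G$ (every remaining $y_{l'} \in U_2 \setminus \{y_l\}$ divides $p_2/y_l$, and the cross-edge $\{x_k, y_l\}$ is covered by $x_k \mid p_1$), so $p_1 p_2$ is not minimal.

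The principal technical subtlety is the bookkeeping around $p_1 p_2$ in the third step: this monomial can enter the list as $f_{i_0} p_2$, as $g_{j_0} p_1$, or as both, depending on whether $p_1$ and $p_2$ happen to be minimal covers of their respective graphs, and verifying that \emph{the others are all minimal} uniformly across these configurations is where I expect to spend the most care.
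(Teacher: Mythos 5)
Your argument is correct and follows essentially the same route as the paper: the same use of a witness $x_l$ dividing $p_1$ but not $f_i$ (equivalently, $p_1\nmid f_i$) to block removal of $y$-variables, and the same construction of a proper sub-cover of $p_1p_2$ when one of $p_1,p_2$ is non-minimal. The only difference is that you spell out the generation step (via the pigeonhole on cross-edges), which the paper dismisses as easy to observe.
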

 
\begin{proof}
If $p_1$ and $p_2$ are both minimal 1-covers, $p_1p_2$ is clearly a minimal 1-cover. If one of them, say $p_1$ is not minimal and it is divisible by a 1-cover $h$ of $G_1$, then $hp_2$ is a 1-cover of $G$ dividing $p_1p_2$. For $f_i \neq p_1$, the monomial $ f_ip_2 $ is clearly a minimal 1-cover since there must exist $x_l$ dividing $p_1$ but not $f_i$, and therefore we cannot remove any variable $y_j$ from $p_2$, since otherwise we would uncover the edge $(x_l, y_j)$. Similarly we can see that all the monomials of the form $g_jp_1$ are minimal 1-covers if $g_j \neq p_2.$ It is easy to observe that any cover not of this form is not minimal.
\end{proof}

\begin{thm}
\label{quasilinked}
Let $G_1 \oplus_{U_1,U_2} G_2$ be linked with covers and take $p_1, p_2$ as in Definition \ref{linkdef}. Call $I$ the cover ideal of $G$ and $I_i$ the cover ideal of $G_i$ for $i=1,2$. Assume $p_1,p_2$ are either both minimal 1-covers of the respective graphs or both non-minimal. The following conditions are equivalent:
\begin{enumerate}
\item $I$ is quasi-equigenerated.
\item Both $I_1$ and $I_2$ are quasi-equigenerated.
\end{enumerate}
\end{thm}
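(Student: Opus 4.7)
The plan is to invoke Proposition \ref{linked}, which describes the minimal generators of $I$ explicitly as $f_ip_2$ and $g_jp_1$ (where the $f_i$ are the minimal $1$-covers of $G_1$ and the $g_j$ those of $G_2$). This reduces everything to comparing weights on generators of $I_1$, $I_2$ and the monomials $p_1, p_2$.

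For $(1)\Rightarrow(2)$, suppose $\gamma=(a_1,\ldots,a_n,b_1,\ldots,b_m)\in\mathbb{N}_{\geq 1}^{n+m}$ witnesses that $I$ is quasi-equigenerated of some degree $D$. Set $\alpha=(a_1,\ldots,a_n)$ and $\beta=(b_1,\ldots,b_m)$. Then $d_\gamma(f_ip_2)=d_\alpha(f_i)+d_\beta(p_2)=D$, so $d_\alpha(f_i)=D-d_\beta(p_2)$ is independent of $i$; hence $\alpha$ witnesses that $I_1$ is quasi-equigenerated. By the symmetric argument on the generators $g_jp_1$, $\beta$ witnesses that $I_2$ is quasi-equigenerated.

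For $(2)\Rightarrow(1)$, let $\alpha\in\mathbb{N}_{\geq 1}^n$ and $\beta\in\mathbb{N}_{\geq 1}^m$ be weights making $I_1$ and $I_2$ quasi-equigenerated of degrees $d_1$ and $d_2$ respectively. I would look for positive integers $c_1,c_2$ such that $(c_1\alpha,c_2\beta)$ quasi-equigenerates $I$. Equating the $(c_1\alpha,c_2\beta)$-degrees of $f_ip_2$ and $g_jp_1$ reduces to the single balance equation
\[
c_1\bigl(d_\alpha(p_1)-d_1\bigr)=c_2\bigl(d_\beta(p_2)-d_2\bigr).
\]
Since $p_1$ is a $1$-cover of $G_1$, some minimal $1$-cover $f_i$ divides $p_1$, and positivity of $\alpha$ yields $d_\alpha(p_1)\geq d_\alpha(f_i)=d_1$, with equality exactly when $p_1$ itself is minimal; the same holds for $p_2$. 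The hypothesis that $p_1,p_2$ are both minimal or both non-minimal now ensures that the two parenthesized differences are either both zero (take $c_1=c_2=1$) or both strictly positive (take $c_1=d_\beta(p_2)-d_2$ and $c_2=d_\alpha(p_1)-d_1$), and in either case $(c_1\alpha,c_2\beta)\in\mathbb{N}_{\geq 1}^{n+m}$ solves the equation.

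The main obstacle, and the conceptual content of the theorem, is identifying why the minimal/non-minimal hypothesis on $p_1,p_2$ is precisely what is needed: in the excluded mixed case the balance equation degenerates to $0=(\text{positive})$, which has no positive solution, so the join ideal would fail to be quasi-equigenerated even though $I_1$ and $I_2$ are. Once this is recognized the construction of the weight $(c_1\alpha,c_2\beta)$ is essentially forced.
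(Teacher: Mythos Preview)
Your proof is correct and follows essentially the same approach as the paper's: both use Proposition \ref{linked} to identify the minimal generators of $I$ as the $f_ip_2$ and $g_jp_1$, then compare weighted degrees. Your $(1)\Rightarrow(2)$ is phrased directly rather than by contradiction, and in $(2)\Rightarrow(1)$ the paper divides your scaling factors $c_1=d_\beta(p_2)-d_2$, $c_2=d_\alpha(p_1)-d_1$ by their gcd, but these are cosmetic differences.
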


\begin{proof}
$(1)\Rightarrow(2)$ We argue by way of contradiction and assume $I_1=(f_1, \ldots, f_s)$ not quasi-equigenerated. Assuming $p_1,p_2$ both minimal or both non-minimal, we get by Proposition \ref{linked} that for every $i$, $f_ip_2$ is a minimal 1-cover of $G$. Since $I_1$ is not quasi-equigenerated the linear system defined by the equations $ d_{\alpha}(f_i)= d_{\alpha}(f_j) $ has no solutions among the nonzero positive integers. It follows that also the linear system defined by equations $ d_{\alpha}(f_ip_2)= d_{\alpha}(f_jp_2) $ has no solutions, and hence $I$ is not quasi-equigenerated. \\
$(2)\Rightarrow(1)$ Let $I_1=(f_1, \ldots, f_s)$ be quasi-equigenerated with $\alpha=(a_1, \ldots, a_n)$ and $I_2=(g_1, \ldots, g_t)$ with $\beta=(b_1, \ldots, b_m)$. If $p_1,p_2$ are both minimal 1-covers, by Proposition \ref{linked}, it is easy to observe that $I$ is quasi-equigenerated with $(a_1, \ldots, a_n, b_1, \ldots, b_m)$, since $ d_{\alpha}(f_i)= d_{\alpha}(p_1) $ and $ d_{\beta}(g_j)= d_{\beta}(p_2) $ for every $i,j$. Instead, if $p_1,p_2$ are both non-minimal, the numbers $A=  d_{\alpha}(p_1) - d_{\alpha}(f_i) $ and $B=  d_{\beta}(p_2) - d_{\beta}(g_j) $ are both greater than zero. Hence, the ideal $I$ is quasi-equigenerated with $(ca_1, ca_2, \ldots, ca_n, db_1, db_2, \ldots, db_m)$ where $c= \frac{B}{\gcd(A,B)}$ and $d= \frac{A}{\gcd(A,B)}$.
\end{proof}

\begin{remark} \label{linkremark}
Observe that the argument used to prove Theorem \ref{quasilinked} actually shows that if $J(G_1)$ and $J(G_2)$ are not quasi-equigenerated then $J(G)$ cannot be quasi-equigenerated. Anyway, if one of them has quasi-equigenerated cover ideal, the assumptions on minimality of $p_1$ and $p_2$ are needed, since it is possible to produce a counterexample in the case one is minimal as a 1-cover and the other is not. The counterexample is the graph $G=P_5 \oplus_{U_1,V_2} P_2$ where $U_1=\lbrace x_2, x_3, x_5 \rbrace \subseteq \lbrace x_1, x_2, x_3, x_4, x_5 \rbrace$ and $V_2= \lbrace y_1, y_2\rbrace.$  
\begin{center}
\begin{tikzpicture}
\begin{scope}[every node/.style={fill=white,circle,thick,draw}]
    \node(A) at (-2,0) {$x_1$};
    \node(B) at  (-1,0) {$x_2$};
    \node(C) at  (0,0) {$x_3$};
    \node(D) at (1,0) {$x_4$};
    \node (E) at (2,0) {$x_5$};
    \node (F) at (-1,-2) {$y_1$};
    \node (G) at (1,-2) {$y_2$};
\end{scope}
\begin{scope}
            [every edge/.style={draw=black,very thick}]
    \path[-](A)edge node {} (B);
    \path[-](B) edge node {} (C);
   %\path[-](A) edge node {} (E);
    %\path[-](D) edge node {} (F);
    \path[-](D) edge node {} (C);
    \path[-](D) edge node {} (E);
    \path[-](E) edge node {} (G);
    \path[-](C) edge node {} (G);
    \path[-](B) edge node {} (G);
    \path[-](B) edge node {} (F);
    \path[-](C) edge node {} (F);
    \path[-](E) edge node {} (F);
\end{scope}
\end{tikzpicture}
\end{center}

Indeed $J(G)$ is quasi-equigenerated setting $d_{\alpha}(x_2)=2$ and $d_{\alpha}(x_i)=d_{\alpha}(y_j) = 1$ for all $j$ and $i \neq 2$.
\end{remark}

%\section{Graphs having linear symbolic defect}

\section{Freiman cover ideals}

In this section we study Freiman property for quasi-equigenerated cover ideals by giving a general structure theorem and characterizing Freiman cover ideals among some classes of graphs. Our settings and notations, where not differently specified, will be the following: $I$ will be the cover ideal (quasi-equigenerated) of a graph $G$ on $n$ vertices or more generally, a quasi-equigenerated squarefree monomial ideal. We express the fiber cone of $I$ as the ring $$ F(I)= \dfrac{k[T_1, \ldots, T_t]}{\mathcal{I}},$$ where the variables $T_i$ correspond to minimal generators of $I$ via the usual ring homomorphism. We consider the minimal 1-covers of $G$ using the equivalent notation induced by the maximal independent sets described in Remark \ref{indipcover}. %, that is, if $U$ is a maximal independent of $G$, its associated minimal 1-cover is defined as $h_U=F \prod_{x_i \in U} x_i^{-1}$, where $F=x_1x_2 \cdots x_n$. In these case 
For $U$ a maximal independent of $G$, we call $T_U$ the correspondent variable in $k[T_1, \ldots, T_t]$.
We set $t=\mu(I)$ as the number of minimal generators of $I$ and $l(I)$ as its analytic spread. Often, we will use the notation $T_i, T_U$ also for their corresponding images in $F(I)$ instead of writing $\overline{T_i}, \overline{T_U}$. 

Set $a:=t-l(I)$ and call $b$ the number of minimal generators of $\mathcal{I}$ having degree $2$ with respect to the grading of the polynomial ring $k[T_1, \ldots, T_t]$.
The next lemma translates the Freiman condition in term of the invariants $a$ and $b$.

\begin{lem}  \label{freimanlemma}
Let $I$ be a squarefree quasi-equigenerated monomial ideal and let $\mathcal{I}$ be the defining ideal of the fiber cone $F(I)$. For $a,b$ defined as above, $b \leq \binom{a+1}{2}$ and $I$ is Freiman if and only if $b=\binom{a+1}{2}.$
 \end{lem}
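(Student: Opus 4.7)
The plan is to read off $\mu(I^2)$ from the fiber cone presentation $F(I)=k[T_1,\ldots,T_t]/\mathcal{I}$ and then substitute into the Freiman inequality; once set up, the statement reduces to a one-line algebraic manipulation. The key identity I want is
$$\mu(I^2) = \binom{t+1}{2} - b,$$
so the Freiman inequality turns into a linear upper bound on $b$.

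First I would justify the identity. With $S=k[T_1,\ldots,T_t]$, we have $F(I)_2\cong S_2/\mathcal{I}_2$ and $\dim_k S_2=\binom{t+1}{2}$, so it suffices to prove $\dim_k \mathcal{I}_2=b$. The ideal $\mathcal{I}$ has no degree $1$ generators: the monomials $f_1,\ldots,f_t$ are pairwise distinct minimal generators of $I$, so no nonzero linear combination $\sum c_i T_i$ lies in the kernel of $T_i\mapsto f_i$. Consequently $\mathcal{I}_2$ is spanned by the $b$ minimal degree $2$ generators of $\mathcal{I}$, and these are $k$-linearly independent because minimal generators of a graded ideal are always independent within each single degree. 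Hence $\dim_k\mathcal{I}_2=b$.

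Next I would apply the Freiman inequality $\mu(I^2)\geq l(I)\mu(I)-\binom{l(I)}{2}$. Writing $l=l(I)$ and substituting $\mu(I)=t$ together with the identity above yields
$$b \;\leq\; \binom{t+1}{2}-lt+\binom{l}{2} \;=\; \frac{(t-l)(t-l+1)}{2} \;=\; \binom{a+1}{2}.$$
Since every step of the derivation is reversible, the equality $b=\binom{a+1}{2}$ is equivalent to equality in the Freiman inequality, i.e., to $I$ being Freiman.

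There is no serious obstacle in the argument; the only point needing care is the justification that $\dim_k \mathcal{I}_2$ is exactly $b$ rather than something larger coming either from degree $1$ generators or from $k$-linear dependencies among minimal degree $2$ generators. Proposition \ref{defidealhomog} is used in the background to ensure that $\mathcal{I}$ is a homogeneous ideal generated by binomials, so that it makes sense to count minimal generators degree by degree.
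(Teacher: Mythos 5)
Your proposal is correct and follows essentially the same route as the paper: both establish $\mu(I^2)=\binom{t+1}{2}-b$ from the presentation of $F(I)$ and then substitute into the Freiman inequality, with the equivalence of the two equalities following from the reversibility of the computation. You simply spell out the justification that $\dim_k\mathcal{I}_2=b$ (no linear relations among distinct monomial generators, hence independence of the degree-two minimal generators), which the paper leaves implicit.
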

\begin{proof}
Since $\mathcal{I}$ is generated by binomials, $\mu(I^2)= \binom{t+1}{2} - b$. Thus we have $$ \binom{t+1}{2} - b \geq (t-a)t - \binom{t-a}{2}
$$ and the equality holds if and only if $I$ is Freiman. A straightforward computation leads to our thesis.
\end{proof}

\begin{remark} \label{2generated}
In Theorem \ref{herzogthm} is stated that the fiber cone of a Freiman ideal has a 2-linear free resolution. An immediate consequence of this fact is that, when $I$ is Freiman, the ideal $\mathcal{I}$ is generated in degree 2.
\end{remark}

\begin{example} We observe the following facts:
\begin{enumerate}
    \item The cover ideals of complete graphs are Freiman of linear type. One can check this fact easily, but we shall prove a more general result in Theorem \ref{almostfreiman}.
    \item The graph in the following picture has cover ideal generated by $  f_1= x_1x_2x_4x_5, f_2= x_1x_3x_4, f_3= x_1x_3x_5, f_4= x_2x_3x_4, f_5= x_2x_3x_5.$ Its fiber cone is isomorphic to the ring $$ \dfrac{k[T_1,T_2,T_3,T_4,T_5]}{(T_2T_5-T_4T_3)} $$ and hence the cover ideal is Freiman by Lemma \ref{freimanlemma}.
\begin{center}

\begin{tikzpicture}
\begin{scope}[every node/.style={fill=white,circle,thick,draw}]
    \node(A) at (-1,-1) {$x_2$};
    \node(B) at  (-1,1) {$x_1$};
    \node(C) at  (1,1) {$x_4$};
    \node(D) at (1,-1) {$x_5$};
   % \node (E) at (-1.75,1) {e};
    %\node (F) at (-1.75,-1) {f};
    \node (G) at (0,0) {$x_3$};
\end{scope}
\begin{scope}
            [every edge/.style={draw=black,very thick}]
    \path[-](A)edge node {} (B);
    \path[-](B) edge node {} (G);
    \path[-](A) edge node {} (G);
    \path[-](C) edge node {} (G);
    \path[-](D) edge node {} (G);
   % \path[-](F) edge node {} (G);
    %\path[-](F) edge node {} (E);
    %\path[-](E) edge node {} (G);
    \path[-](D) edge node {} (C);
\end{scope}
\end{tikzpicture}
\end{center}

%\begin{center}
 %   \begin{tikzpicture}
  %  \draw[fill] (-1,1) circle [radius=.06];
   % \draw[fill] (-1,-1) circle [radius=.06];
    %\draw[fill] (1,1) circle [radius=.06];
    %\draw[fill] (1,-1) circle [radius=.06];
    %\draw[fill] (0,0) circle [radius=.06];
    
%    \draw[thick] (-1,1) -- (-1,-1);
 %   \draw[thick] (-1,1) -- (0,0);
  %  \draw[thick] (-1,-1) -- (0,0);
   % \draw[thick] (1,1) -- (1,-1);
    %\draw[thick] (1,-1) -- (0,0);
%    \draw[thick] (1,1) -- (0,0);
   
%   \begin{scope}
 %   \node(A) at (-1,-1)[left=.2] {$x_2$};
  %  \node(B) at  (-1,1)[left=.2] {$x_1$};
   % \node(C) at  (1,1)[right=.2] {$x_4$};
    %\node(D) at (1,-1)[right=.2] {$x_5$};
   % \node (E) at (-1.75,1) {e};
    %\node (F) at (-1.75,-1) {f};
    %\node (G) at (0,0)[above=0.4] {$x_3$};
%\end{scope}
 %   \end{tikzpicture}
%\end{center}

\item Call $H_3$ the graph in next picture, that is the graph on $6$ vertices $x_1, \ldots, x_6$ whose independent sets are $\lbrace x_1, x_2 \rbrace$, $\lbrace x_3, x_4 \rbrace$, $\lbrace x_5, x_6 \rbrace$, $\lbrace x_1, x_3 \rbrace$, $\lbrace x_2, x_5 \rbrace$, $\lbrace x_4, x_6 \rbrace.$ The fiber cone of this graph is isomorphic to the ring $$ \dfrac{k[T_1,T_2,T_3,T_4,T_5,T_6]}{(T_1T_2T_3-T_4T_5T_6)} $$ and hence the cover ideal is not Freiman by Remark \ref{2generated}.

\begin{center}

\begin{tikzpicture}
\begin{scope}[every node/.style={fill=white,circle,thick,draw}]
    \node(A) at (-1,-1) {$x_4$};
    \node (G) at (0,0) {$x_5$};
    \node(B) at  (-1,1) {$x_1$};
    \node(C) at  (2,1) {$x_6$};
    \node(D) at (2,-1) {$x_2$};
   \node (E) at (1,0) {$x_3$};
\end{scope}
\begin{scope}
            [every edge/.style={draw=black,very thick}]
    \path[-](A)edge node {} (B);
    \path[-](B) edge node {} (G);
    \path[-](A) edge node {} (G);
    \path[-](C) edge node {} (E);
    \path[-](D) edge node {} (E);
    \path[-](D) edge node {} (C);
    \path[-](A)edge node {} (D);
    \path[-](B) edge node {} (C);
    \path[-](E) edge node {} (G);
\end{scope}
\end{tikzpicture}
\end{center}

\item Starting with $H_3$, one may inductively construct a family of graphs whose fiber cones have principal defining ideal generated by a binomial of degree $n$ for each $n \geq 3$
(clearly the cover ideals of the graphs of this family are not Freiman).
Indeed, for $n \geq 3$, define $H_{n+1}$ as the graph on the vertices $x_1, \ldots, x_{2n+2}$, having the same independent sets as $H_{n}$ except $\lbrace x_{2n-2}, x_{2n} \rbrace$ and having also the independent sets $\lbrace x_{2n+1}, x_{2n+2} \rbrace$, $\lbrace x_{2n-2}, x_{2n+1} \rbrace$, $\lbrace x_{2n}, x_{2n+2} \rbrace$.
\end{enumerate}
\end{example}

\medskip

As application of Lemma \ref{freimanlemma}, we can get a complete characterization of when the join of two graphs is Freiman.

\begin{thm} \label{freimanjoint}
Let $G_1$ and $G_2$ be two simple graphs and let $G:= G_1 \oplus G_2$. Let $I_1, I_2, I$ be respectively the cover ideals of $G_1, G_2, G$. Suppose $I_1$ and $I_2$ to be quasi-equigenerated. The following conditions are equivalent: \begin{enumerate}
\item $I$ is a Freiman ideal.
\item $I_1$ and $I_2$ are both Freiman ideals and at least one of them is of linear type.
\end{enumerate}
\end{thm}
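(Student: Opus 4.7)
The strategy is to identify the fiber cone of $I$ with the tensor product $F(I_1) \otimes_k F(I_2)$ and then read off the Freiman condition via Lemma \ref{freimanlemma} and a binomial identity.

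First, I would unwind the generator structure of $I$. The join $G = G_1 \oplus G_2$ is linked with covers in the sense of Definition \ref{linkdef} with $U_1 = V_1$, $U_2 = V_2$, so $p_\ell = \prod_{v \in V_\ell} v$. Assuming (nontrivially) that $G_1$ and $G_2$ each have at least one edge, the $p_\ell$ are $1$-covers but not minimal ones. Proposition \ref{linked} then exhibits the minimal generators of $I$ as the disjoint union $\{f_i p_2\}_{i=1}^{s_1} \cup \{p_1 g_j\}_{j=1}^{s_2}$ with $s_\ell = \mu(I_\ell)$, and Theorem \ref{quasilinked} ensures that $I$ is quasi-equigenerated.

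Next, I would establish the tensor decomposition $F(I) \cong F(I_1) \otimes_k F(I_2)$. Let $S_\ell = k[T_1^{(\ell)}, \ldots, T_{s_\ell}^{(\ell)}]$ present $F(I_\ell) = S_\ell / \mathcal{I}_\ell$, set $S = S_1 \otimes_k S_2$, and let $\Phi \colon S \to F(I)$ be the surjection defined by $T_i^{(1)} \mapsto f_i p_2$ and $T_j^{(2)} \mapsto p_1 g_j$. By Proposition \ref{defidealhomog}, $\ker \Phi$ is generated by disjoint-support binomials $M - M'$. Writing $M = M_1 M_2$ and $M' = M_1' M_2'$ with $M_\ell, M_\ell'$ monomials in the $T^{(\ell)}$-variables of total degrees $A, A', B, B'$ respectively, the equality $\Phi(M) = \Phi(M')$ separates by variable support into a $V_1$-equation and a $V_2$-equation. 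Applying $d_\alpha$ (from the quasi-equigeneration of $I_1$) to the $V_1$-equation and exploiting the strict positivity $d_\alpha(p_1) > d_\alpha(f_i)$ together with the quasi-equigeneration of $I$, one forces $A = A'$ and $B = B'$; the two separated equations are then genuine binomial relations in $\mathcal{I}_1$ and $\mathcal{I}_2$. The telescoping identity
\[
M - M' = M_1(M_2 - M_2') + (M_1 - M_1') M_2'
\]
now shows $M - M' \in \mathcal{I}_1 S + \mathcal{I}_2 S$. Hence $\ker \Phi = \mathcal{I}_1 S + \mathcal{I}_2 S$ and $F(I) \cong F(I_1) \otimes_k F(I_2)$.

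Finally, I would count invariants and conclude. Set $a_\ell = \mu(I_\ell) - l(I_\ell)$ and let $b_\ell$ be the number of minimal degree-$2$ binomial generators of $\mathcal{I}_\ell$; define $a, b$ analogously for $I$. Because the two variable sets are disjoint, dimensions and degree-$2$ minimal generator counts add under the tensor product, giving $a = a_1 + a_2$ and $b = b_1 + b_2$. Combining the identity
\[
\binom{a_1 + a_2 + 1}{2} = \binom{a_1 + 1}{2} + \binom{a_2 + 1}{2} + a_1 a_2
\]
with the Freiman inequalities $b_\ell \le \binom{a_\ell + 1}{2}$ yields $b \le \binom{a+1}{2} - a_1 a_2$, with equality if and only if $b_\ell = \binom{a_\ell + 1}{2}$ for both $\ell$ (i.e., both $I_1$ and $I_2$ are Freiman) and $a_1 a_2 = 0$ (i.e., at least one of them is of linear type). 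By Lemma \ref{freimanlemma}, this equivalence is precisely the desired characterization.

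The main obstacle is the tensor decomposition: one must rule out binomial relations in $F(I)$ that nontrivially mix the two sets of generators. This reduces to the strict positivity of $d_\alpha(p_1) - d_\alpha(f_i)$ and $d_\beta(p_2) - d_\beta(g_j)$, i.e., to the combinatorial fact that $p_1$ and $p_2$ are not themselves minimal $1$-covers of their respective graphs.
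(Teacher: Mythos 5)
Your proposal is correct and follows essentially the same route as the paper: identify $F(I)\cong F(I_1)\otimes_k F(I_2)$, rule out mixed binomial relations by applying the weight function $d_\alpha$ and the strict inequality $d_\alpha(p_1)>d_\alpha(f_i)$ (the paper's $r_2=r_4$, $r_1=r_3$ step), and then conclude from $a=a_1+a_2$, $b=b_1+b_2$ and Lemma \ref{freimanlemma} that equality forces both factors Freiman with $a_1a_2=0$. The only cosmetic difference is that you make the binomial identity $\binom{a_1+a_2+1}{2}=\binom{a_1+1}{2}+\binom{a_2+1}{2}+a_1a_2$ explicit where the paper writes the same fact as a chain of two inequalities.
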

 
 \begin{proof}
 Since $I_1$ and $I_2$ are quasi-equigenerated, so it is $I$ by Theorem \ref{quasilinked}.
Assume as in Definition \ref{joint}, $I_1=(f_1, \ldots, f_t) \subseteq k[x_1, \ldots, x_n]$ and $I_2=(g_1, \ldots, g_s) \subseteq k[y_1, \ldots, y_m]$. Write their fiber cones as $$ F(I_1) \cong \frac{k[T_1, \ldots, T_t]}{\mathcal{I}_1} $$ and $$ F(I_2) \cong \frac{k[U_1, \ldots, U_s]}{\mathcal{I}_2} .$$ We claim that $$ F(I) \cong \frac{k[T_1, \ldots, T_t, U_1, \ldots, U_s]}{F_2\mathcal{I}_1 + F_1\mathcal{I}_2}, $$ where $F_1=x_1x_2\cdots x_n$, $F_2=y_1y_2\cdots y_m$. Observe that by Proposition \ref{linked}, the generators of $I$ are of the form $f_iF_2$ and $g_jF_1$.  Hence, using the usual homomorphism defined by $T_i \to f_iF_2$ and $U_j \to g_jF_1$, we identify $F(I)$ with the quotient of $ k[T_1, \ldots, T_t, U_1, \ldots, U_s] $ by a defining ideal $ \mathcal{I} $. Moreover, notice that if $q \in \mathcal{I}_1$, then $F_2q \in \mathcal{I}$ and analogously if $q \in \mathcal{I}_2$, then $F_1q \in \mathcal{I}$. We only need to show that these are all the generators of $\mathcal{I}$. To do this, let $$ p= T_{i_1} \cdots T_{i_{r_1}}U_{j_1}\cdots U_{j_{r_2}} - T_{k_1}\cdots T_{k_{r_3}}U_{l_1}\cdots U_{l_{r_4}} \in \mathcal{I} $$ (observe that by Proposition \ref{defidealhomog}, $r_1+r_2=r_3+r_4$), hence we must have $$ f_{i_1} \cdots f_{i_{r_1}} F_2^{r_1} g_{j_1}\cdots g_{j_{r_2}} F_1^{r_2} = f_{k_1}\cdots f_{k_{r_3}} F_2^{r_3} g_{l_1}\cdots g_{l_{r_4}}F_1^{r_4}.  $$ Separating the variables, this implies $ f_{i_1} \cdots f_{i_{r_1}} F_1^{r_2} = f_{k_1}\cdots f_{k_{r_3}} F_1^{r_4} $. Assuming by way of contradiction $r_2 < r_4$ (or analogously $r_4 < r_2$), we get $ f_{i_1} \cdots f_{i_{r_1}} = f_{k_1}\cdots f_{k_{r_3}} F_1^{r_4-r_2} $ and, since $I_1$ is quasi-equigenerated, we can find $\alpha \in \mathbb{N}_{\geq 1}^{n}$ such that $$ r_1d_{\alpha}(f_{i_1})= r_3d_{\alpha}(f_{i_1})+(r_4-r_2)r_1d_{\alpha}(F_1).$$ But this is a contradiction since $r_4-r_2=r_1-r_3$ and since $f_{i_1}$ properly divides $F_1$. Therefore we must have $r_2=r_4$, $r_1=r_3$, and hence $p$ is in the ideal generated by other generators of $ \mathcal{I} $ of the form $F_2q $ and $F_1q $ for $q \in \mathcal{I}_1$ or $q \in \mathcal{I}_2$ and this proves our claim. %$$ f_{i_1} \cdots f_{i_{r_1}} g_{j_1}\cdots g_{j_{r_2}} = f_{k_1}\cdots f_{k_{r_1}}  g_{l_1}\cdots g_{l_{r_2}},  $$ showing that $q$ is of the form \\
\\
Now, it is easy to observe that the analytic spread $l(I)=l(I_1)+l(I_2)$. Following the notation of Lemma \ref{freimanlemma}, denote by $b_1, b_2, b$ the number of minimal generators of of degree $2$, respectively of $\mathcal{I}_1$, $\mathcal{I}_2$, $\mathcal{I}$. Also write $a_1:=n -l(I_1)$, $a_2:=m -l(I_2)$, $a:=n+m -l(I)$. Hence, clearly $b=b_1+b_2$ and $a=a_1+a_2$. Thus $$ b \leq \binom{a_1+1}{2} + \binom{a_2+1}{2} \leq \binom{a+1}{2}. $$ The first inequality is an equality if and only if both $I_1$ and $I_2$ are Freiman ideals, while the second one is an equality if and only if one among $a_1$ and $a_2$ is zero, meaning that $ F(I_1) $ or $F(I_2)$ is a polynomial ring.
By Lemma \ref{freimanlemma}, $I$ is a Freiman ideal if and only if both these conditions hold.
\end{proof}

\medskip

In \cite[Theorem 1.3]{HHZ}, it is shown that a quasi-equigenerated ideal $I$ is Freiman if and only if $F(I)$ has minimal multiplicity. We want to use this fact, together with the classification of homogeneous domain (among quotients of polynomial rings) of minimal multiplicity given in \cite[Section 4]{EG}, in order to describe Freiman squarefree quasi-equigenerated monomial ideals as ideals generated by minors of certain matrices.

\begin{lem} \label{2xm}
Let $\mathbf{T}=\{T_1,\dots,T_s\}$ for some $s>4$ be a set of indeterminants, and let $\mathbf{A},\mathbf{B},\mathbf{C},\mathbf{D}\subset\mathbf{T}$ such that $T_1\not\in\mathbf{A}$, $T_2\not\in\mathbf{B}$, $T_3\not\in\mathbf{C}$, and $T_4\not\in\mathbf{D}$.  Let $\ell_1\in k[\mathbf{A}]$, $\ell_2\in k[\mathbf{B}]$, $\ell_3\in k[\mathbf{C}]$, and $\ell_4\in k[\mathbf{D}]$ be linear forms.  
%So for some sets of constants $\{a_i\},\{b_i\},\{c_i\},$ and $\{d_i\}$, $\ell_1=a_1A_1+\dots+a_sA_s$, $\ell_2=b_1B_1+\dots+b_tB_t$, $\ell_3=c_1C_1+\dots+c_uC_u$, and $\ell_4=d_1D_1+\dots+d_vD_v$. 
Suppose that the matrix

\[ M=\left(\begin{matrix} 
aT_1+\ell_1 & bT_2+\ell_2 \\
cT_3+\ell_3 & dT_4+\ell_4 
\end{matrix}\right)\]
satisfies $\det M = \alpha (T_1T_4-T_2T_3)$ for some $\alpha \in k$.  Then $\ell_1=\ell_2=\ell_3=\ell_4=0$.
\end{lem}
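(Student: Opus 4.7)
The approach is a direct coefficient comparison. I would expand
\[ \det M = adT_1T_4 + aT_1\ell_4 + dT_4\ell_1 + \ell_1\ell_4 - bcT_2T_3 - bT_2\ell_3 - cT_3\ell_2 - \ell_2\ell_3 \]
and compare with $\alpha(T_1T_4 - T_2T_3)$. Matching the coefficients of $T_1T_4$ and of $T_2T_3$ first gives $ad = bc = \alpha$; I would treat $\alpha = 0$ as a degenerate sub-case so that in what follows $a, b, c, d$ may be assumed nonzero.

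The key step is then to fix any $T_k \in \mathbf{T} \setminus \{T_1, T_2, T_3, T_4\}$ (which exists because $s > 4$) and examine the coefficient of $T_1T_k$ in $\det M$. The placement restrictions on the $\ell_i$ are designed to guarantee that $aT_1\ell_4$ is the only term in the expansion producing a $T_1T_k$ monomial, so that coefficient equals $a$ times the $T_k$-coefficient of $\ell_4$; matching $0$ and dividing by $a$ kills the $T_k$-coefficient of $\ell_4$. Running over all admissible $k$ forces $\ell_4 = 0$, and the three completely symmetric computations (using the coefficients of $T_4T_k$, $T_2T_k$, and $T_3T_k$) then yield $\ell_1 = \ell_3 = \ell_2 = 0$.

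The main obstacle is verifying the ``only $aT_1\ell_4$ contributes'' assertion. The cross-products $\ell_1\ell_4$ and $\ell_2\ell_3$ can in principle also produce $T_1T_k$ monomials --- for instance if $\ell_4$ has a $T_1$-term, or if both $\ell_2$ and $\ell_3$ happen to have terms in $T_1$ and $T_k$. I would handle this by a preparatory case analysis on the ``interior'' coefficients, namely those of $T_i^2$ and of $T_iT_j$ for $i, j \in \{1,2,3,4\}$, using the relations $ad = bc = \alpha \neq 0$ to force each $\ell_i$ to have no $T_j$-term for $j \in \{1,2,3,4\}$ before passing to the exterior variables. Once that interior clean-up is done, the analysis with the variables $T_k$ for $k > 4$ proceeds without interference and gives the conclusion.
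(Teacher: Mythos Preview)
Your coefficient-comparison strategy is genuinely different from the paper's: the paper first performs row operations to arrange $\mathbf{A}\cap\mathbf{D}=\emptyset=\mathbf{B}\cap\mathbf{C}$ and $a=d=1$, and only then chases terms by contradiction, whereas you attempt a direct expansion with no preliminary normalisation.

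There is, however, a real gap at exactly the step you flag as ``the main obstacle''. The interior clean-up you defer --- using the coefficients of $T_i^2$ and $T_iT_j$ for $i,j\le 4$ to force every $\ell_i$ to have no $T_j$-term with $j\le 4$ --- cannot be carried out, because the statement as written is false. With $a=b=c=d=\alpha=1$ take
\[
\ell_1=T_2+T_3+T_4,\qquad \ell_2=T_4,\qquad \ell_3=T_4,\qquad \ell_4=0.
\]
All four placement constraints $T_i\notin\ell_i$ hold, yet
\[
(T_1+\ell_1)\,T_4-(T_2+T_4)(T_3+T_4)=T_1T_4-T_2T_3,
\]
so $\det M=\alpha(T_1T_4-T_2T_3)$ with $\ell_1,\ell_2,\ell_3\neq 0$. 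In particular the ``interior'' equations do \emph{not} force the $c_{ij}$ with $j\le 4$ to vanish, and your subsequent exterior step (which needs $aT_1\ell_4$ to be the sole source of $T_1T_k$) has no footing. Note also that your opening assertion $ad=bc=\alpha$ already presupposes that $\ell_1\ell_4$ and $\ell_2\ell_3$ contribute nothing to the $T_1T_4$ and $T_2T_3$ coefficients, which is precisely part of what the interior analysis was meant to establish; the argument is circular at that point. The paper's row-operation manoeuvre avoids this counterexample because it genuinely changes the entries of $M$ --- so strictly speaking the paper is proving that the \emph{transformed} $\ell_i$ vanish, which (since the ideal of $2\times 2$ minors is invariant under row operations) is what is actually needed in Theorem~\ref{matrix}. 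A purely coefficient-matching proof of the lemma as literally stated is therefore not available without an extra hypothesis such as $\mathbf{A}\cap\mathbf{D}=\emptyset=\mathbf{B}\cap\mathbf{C}$.
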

\begin{proof}
Since $\ell_1\in k[\mathbf{A}]$, $\ell_1=\sum_{A\in\mathbf{A}}a_AA$ where each $a_A\in k$.  Similarly $\ell_2=\sum_{B\in\mathbf{B}}b_BB$, $\ell_3=\sum_{C\in\mathbf{C}}c_CC$, and $\ell_4=\sum_{D\in\mathbf{D}}d_DD$ where each $b_B,c_C,d_D\in k$.\\
We may assume that $\mathbf{A}\cap\mathbf{D}=\emptyset=\mathbf{B}\cap\mathbf{C}$.  Indeed, suppose that $\mathbf{A}\cap\mathbf{D}$ is nonempty.  Then there is some term $A\in\mathbf{A}$ also contained in $\mathbf{D}$.  Since the terms of $\det M$ are squarefree, we conclude that $A\in\mathbf{B}\cap\mathbf{C}$ as well.  Thus, by applying a row operation to $M$, we can obtain an new matrix, \[M'=\left(\begin{matrix} 
aT_1+\ell_1 & bT_2+\ell_2 \\
cT_3+\ell_3' & dT_4+\ell_4' 
\end{matrix}\right)\]
with the same determinant as $M$ but such that $\ell_1$ and $\ell_4$ share one fewer variables than in $M$.  Repeating this process we can eliminate all elements of $\mathbf{A}\cap\mathbf{D}$.  Working symmetrically, the same conclusion holds for $\mathbf{B}\cap\mathbf{C}$.

We also may assume that $a=d=1$, since scaling rows of matrices scales determinants by constants.  

 Suppose $\ell_4\neq 0$. Since $\ell_4$ is nonzero then $(T_1+\ell_1)(T_3+\ell_4)$ contains terms $T_1D_i$ for each $1\leq i\leq v$.  As these terms do not appear in the determinant, they must be cancelled by terms of $(bT_2+\ell_2)(cT_3+\ell_3)$.  Thus $T_1\in\mathbf{B}\cup\mathbf{C}$, and so  $\mathbf{D}\subseteq\mathbf{B}$.  Since $T_1\in\mathbf{C}$, $(bT_2+\ell_2)(cT_3+\ell_3)$ contains a term of the form $T_1B$ for each $B\in\mathbf{B}$.  Since these terms do not appear in the determinant, they must be cancelled with terms from the main diagonal.  Thus $\mathbf{D}\subseteq\mathbf{B}$ and so $\mathbf{B}=\mathbf{D}$.  Since terms of the form $a_AT_4A$ occur on the product of the main diagonal, but not in the determinant, they must cancel with terms of $(bT_2+\ell_2)(cT_3+\ell_3)$.  Thus $T_4\in\mathbf{B}\cup\mathbf{C}$.  Since $T_4\not\in\mathbf{D}=\mathbf{B}$, $T_4\in \mathbf{C}$.  However, this means that $\alpha T_4B$ is a term of the product of the antidiagonal which doesn't appear in the determinant and thus must be cancelled with a term from $(T_1+\ell_1)(T_3+\ell_4)$.  This cannot happen because $B\in\mathbf{D}$ and thus $B\not\in\mathbf{A}$.\\
 Thus $\ell_4=0$ and by symmetric arguments, $\ell_1=\ell_2=\ell_3=0$.
\end{proof}

 \begin{thm} Let $I$ be a squarefree quasi-equigenerated monomial ideal. Then $I$ is a Freiman ideal if and only if the defining ideal of its fiber cone $\mathcal{I}$ is either $(0)$ or it is generated by the minors of a generic $2\times m$ matrix. %with all different entries.
 \label{matrix}
 \end{thm}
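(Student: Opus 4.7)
The plan is to treat the two directions separately. The reverse implication reduces to a short combinatorial count using Lemma \ref{freimanlemma}; the forward implication is the substantive direction, where I invoke the classification of graded domains of minimal multiplicity from \cite[Section 4]{EG} and then use Lemma \ref{2xm} to sharpen the resulting determinantal presentation of $\mathcal{I}$ into the generic form.

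For the backward direction, if $\mathcal{I}=(0)$ then $I$ is of linear type, so $a=0$ and $b=0=\binom{a+1}{2}$, and Lemma \ref{freimanlemma} gives that $I$ is Freiman. If instead $\mathcal{I}$ is generated by the $2\times 2$ minors of a generic $2\times m$ matrix, the number of $T_i$'s involved is $t=2m$, and $F(I)$ is the coordinate ring of the variety of $2\times m$ matrices of rank at most one, which has Krull dimension $m+1$. Hence $a=t-l(I)=m-1$ and $b=\binom{m}{2}=\binom{a+1}{2}$, so Lemma \ref{freimanlemma} again concludes.

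For the forward direction, assume $I$ is Freiman and $\mathcal{I}\neq (0)$. By parts (5) and (6) of Theorem \ref{herzogthm}, together with Proposition \ref{domain}, $F(I)$ is a graded Cohen--Macaulay domain of minimal multiplicity whose defining ideal admits a $2$-linear resolution. Invoking the classification recorded in \cite[Section 4]{EG}, $\mathrm{Proj}\,F(I)$ must then be (a cone over) a quadric hypersurface, a rational normal scroll, or the Veronese surface in $\mathbb{P}^{5}$. The Veronese case is excluded by the squarefree hypothesis: its defining ideal contains relations of the form $T_iT_j-T_k^{2}$, but by Proposition \ref{defidealhomog} any relation $f_if_j=f_k^{2}$ among squarefree monomial generators of $I$ would force $f_i=f_j=f_k$ by comparing exponents variable-by-variable, a contradiction. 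The same variable-exponent argument rules out any rational normal scroll $S(a_1,\ldots,a_d)$ with some $a_i\ge 2$, whose associated matrix produces a $2\times 2$ minor of the form $TT'-T''^{\,2}$. What remains is the scroll $S(1,\ldots,1)$, whose defining ideal is the ideal of $2\times 2$ minors of a $2\times m$ matrix $M$ with linear form entries.

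The main obstacle is the final step: showing that $M$ can be taken to be \emph{generic}, in the sense that its $2m$ entries are distinct variables (up to scaling rows and columns). This is where Lemma \ref{2xm} does the key work. By Proposition \ref{defidealhomog} together with the squarefree argument above, each quadratic generator of $\mathcal{I}$ is a pure binomial $T_iT_j-T_kT_l$ with $\{i,j\}\cap\{k,l\}=\emptyset$. Applying Lemma \ref{2xm} to each $2\times 2$ submatrix of $M$ whose determinant realizes such a generator, one concludes that the ``extra'' linear parts of the four entries must vanish. Collecting this information across all $\binom{m}{2}$ minors and then rescaling rows and columns, $M$ becomes a generic $2\times m$ matrix in the $T_i$'s, yielding the required presentation.
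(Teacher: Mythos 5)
Your argument is correct and follows essentially the same route as the paper's: both rest on the Eisenbud--Goto classification of graded domains of minimal multiplicity, both exclude the symmetric $3\times 3$ (Veronese) case via squarefreeness, and both use Lemma \ref{2xm} to upgrade the $2\times m$ matrix of linear forms to a generic one. The only cosmetic differences are that you verify the backward direction by computing $a$ and $b$ directly and invoking Lemma \ref{freimanlemma} (where you should allow for variables $T_i$ not appearing in the matrix, which changes $t$ but not $a=t-l(I)$) rather than citing minimal multiplicity of determinantal rings, and that you separately rule out the scrolls $S(a_1,\dots,a_d)$ with some $a_i\ge 2$, a step the paper absorbs into its application of Lemma \ref{2xm}.
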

 
 \begin{proof} %\bf to fix \rm
 When $\mathcal{I}=(0)$ or when it is generated by the minors of a $2\times m$ matrix, by the results in \cite[Section 4]{EG}, $F(I)$ has minimal multiplicity and hence $I$ is Freiman.

Conversely assume $F(I)$ to have minimal multiplicity and $\mathcal{I} \neq (0)$.
By \cite[Theorem 4.3]{EG}, and since $\mathcal{I}$ is generated by binomials, either $\mathcal{I}$ is the ideal generated by the minors of a $2\times m$ matrix of linear forms or is the ideal generated by the $2\times 2$ minors of a generic $3\times 3$ symmetric matrix of linear forms (notice that the case in which $F(I)$ is an hypersurface generated by a quadratic binomial is included in the first case). But, since $I$ is squarefree, for any choice of three monomials $f,g,h$ among the minimal generators of $I$, $f^2 \neq gh$, and thus the case of minors of a symmetric $3\times 3$ matrix has to be excluded.

Suppose that $I=I_2(M)$ for some $2\times m$ matrix $M$, of linear forms.  Since $I$ is generated by squarefree binomials of degree 2, we may assume by Lemma \ref{2xm} that the entries of $M$ are monomials, and hence they are generic.
%$I$ to be Freiman and assume $\mathcal{I} \neq (0)$. 
%Hence by Remark \ref{2generated}, $\mathcal{I}$ is generated in degree 2. Let $b=\mu(\mathcal{I})$ and observe that by Lemma \ref{freimanlemma}, $b = \binom{a+1}{2}$ where $a= \mu(I)-l(I)$. For any choice of three monomial $f,g,h$ minimal generators of $I$, $f^2 \neq gh$ since $I$ is squarefree. We claim that $\mathcal{I}$ is generated by the minors of a $2\times (a+1)$ matrix.
\end{proof}
 
\medskip

Next result will be useful in the following to study the Freiman property for the cover ideals of some classes of graphs.

 \begin{prop}
  \label{primelements}
  Let $G$ be a graph and $I$ its cover ideal. Consider a vertex $x$ belonging to only one maximal independent set $U$ of $G$. Then, the variable $T_U$, associated to the minimal 1-cover $h_U$, is a prime element in the fiber cone $F(I).$
 \end{prop}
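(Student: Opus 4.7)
The plan is to apply Proposition \ref{quotients} to the subset $\{T_U\}\subseteq\mathbf{T}$, so that $\mathcal{J}=(T_U)\subseteq S$. Note that $T_U$ is nonzero in $F(I)$ (its image $h_U$ is nonzero in $F(I)\cong k[h_W]_{W}$) and is a nonunit (it is a form of positive degree in a positively graded algebra), so primeness of $T_U$ as an element of $F(I)$ is equivalent to primeness of the principal ideal $T_U F(I)$. By the equivalence $(1)\Leftrightarrow(2)$ of Proposition \ref{quotients}, it suffices to show that $(\mathcal{I}+(T_U))/(T_U)$ is either $(0)$ or generated by binomials, and the cleanest way to ensure this is to prove the stronger statement that $T_U$ does not occur in any minimal binomial generator of $\mathcal{I}$.

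The key observation is to track the exponent of $x$ in each minimal generator of $I$. Since by hypothesis $U$ is the only maximal independent set of $G$ containing $x$, for every maximal independent set $W$ we have $x\notin W$ if and only if $W\neq U$; consequently, the exponent of $x$ in the 1-cover $h_W=\prod_{x_i\notin W}x_i$ equals $1$ when $W\neq U$ and $0$ when $W=U$. Now fix any minimal binomial generator of $\mathcal{I}$, which by Proposition \ref{defidealhomog} has the form $T_{i_1}\cdots T_{i_r}-T_{j_1}\cdots T_{j_r}$ with $\{i_1,\dots,i_r\}\cap\{j_1,\dots,j_r\}=\emptyset$. The relation $h_{W_{i_1}}\cdots h_{W_{i_r}}=h_{W_{j_1}}\cdots h_{W_{j_r}}$ forces equal exponents of $x$ on each side: if $T_U$ occurs $a$ times on the left and $b$ times on the right, then $r-a=r-b$, whence $a=b$. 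The disjoint-support condition forces $\min(a,b)=0$, so $a=b=0$ and $T_U$ appears on neither side of the binomial.

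This completes the plan: no minimal generator of $\mathcal{I}$ involves $T_U$, so the image of each such generator in $S/(T_U)$ is still the same binomial, giving condition (2) of Proposition \ref{quotients} and hence primeness. I expect no genuine obstacle here; the only subtle point is using the disjointness from Proposition \ref{defidealhomog} to upgrade the equality $a=b$ to $a=b=0$. The hypothesis that $x$ belongs to exactly one maximal independent set is precisely what makes the $x$-exponent separate $T_U$ from all other $T_W$.
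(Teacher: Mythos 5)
Your proof is correct and follows essentially the same route as the paper: both invoke Proposition \ref{quotients} together with the binomial/disjoint-support structure from Proposition \ref{defidealhomog}, and both exploit the fact that $x$ divides every minimal 1-cover except $h_U$ (with exponent exactly one) to rule out $T_U$ appearing in any minimal generator of $\mathcal{I}$. The only difference is cosmetic: the paper derives a contradiction from a strict degree inequality in $x$, while you count the $x$-exponent directly to get $a=b$ and then $a=b=0$ from disjointness.
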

 
\begin{proof}
Consider the defining ideal of the fiber cone $\mathcal{I}=(q_1, \ldots, q_s)$. By Proposition \ref{quotients}, the variable $T_U$ is a prime element in $F(I)$ if and only if it does not divide any monomial of the binomials $q_1, \ldots, q_s$. Let $q=\alpha-\beta$ be a minimal generator of $ \mathcal{I}$ with $\alpha, \beta$ monomials, and assume by way of contradiction $T_U$ divides $\alpha$. By Proposition \ref{defidealhomog}, $\mathcal{I}$ is an homogeneous ideal and it follows that there exist some minimal 1-covers $h_i,h_j$ of $G$ and an integer $u \geq 1$ such that $$ h_U^u h_{i_1} \ldots h_{i_r} = h_{j_1} \ldots h_{j_{r+u}}.
$$
But, since $x$ is contained in only one independent set, then it divides every minimal 1-cover of $G$ different from $h_U$ and this induces a contradiction since the degree of $x$ in 
$h_U^u h_{i_1} \ldots h_{i_r}$ is now strictly less than the degree of $x$ in $ h_{j_1} \ldots h_{j_{r+u}}.$
%Now set $I=(q_1, \ldots, q_n)$ where $q_i=g_i$ if $x_i \not \in U$ and $q_i=h_i$ otherwise.  %Consider as usual the surjective homomorphism $\phi: k[T_1, \ldots, T_n] \to F(I)$ mapping $T_i$ to $q_i$ and call $\mathcal{I}= \ker(\phi)$ the defining ideal of $F(I)$ Consider as usual $F(I)$ as the quotient of a polynomial ring $ k[T_1, \ldots, T_n] $ modulo a binomial homogeneous ideal $ \mathcal{I} $ (via the map $T_i \to q_i$. We show that $\mathcal{I}=(0)$, hence the fiber cone $F(I) \cong k[T_1, \ldots, T_n]$ and this will imply that $I$ is a Freiman ideal.Observe that $\mathcal{I}\neq (0)$ if and only if there exist a relation $$ q_{i_1}^{a_1} \cdots q_{i_r}^{a_r} = q_{j_1}^{b_1} \cdots q_{j_r}^{b_r} $$ with $\lbrace i_1, \ldots, i_r \rbrace \cap \lbrace j_1, \ldots j_r \rbrace = \emptyset$ and $a_i, b_j \geq 1$. Assume by way of contradiction such a relation exists, thus the variable $x_{i_1}$ divides the left term with power $a_2 + \ldots + a_c$ and the second term with power $b_1 + \ldots + b_d$. Hence, $a_2 + \ldots + a_c=b_1 + \ldots + b_d$ but this is a contradiction since, considering the variable $x_{j_1}$, we get for the same reason, $a_1 + \ldots + a_c=b_2 + \ldots + b_d$.
\end{proof}

 An easy application of last proposition shows that the elements of a family of graphs including complete graphs have cover ideal of linear type.

 \begin{definition}
A graph of $n+1$ vertices is said to be \it almost complete \rm if it has an induced complete subgraph of $n$ vertices. An almost complete graph having vertices $x_1, x_2, \ldots, x_{n} , y$ is of the form $$ G= K_{n} \oplus_{U, \lbrace y \rbrace} \lbrace y \rbrace $$ where $ K_{n} $ is the complete graph on the vertices $x_1, x_2, \ldots, x_{n}$ and $U$ is a subset of this same set of vertices.
 \end{definition}
 
 \begin{remark}
 A graph $G$ is complete if and only if $c(G)=1$ and it is almost complete if and only if $c(G)=2$ and the intersection of all the independent sets containing two elements is non-empty. 
 %\end{remark}
 
 %Let $F:=x_1x_2 \cdots x_ny$. It is easy to check that the cover ideal $I$ of an almost complete graph $G=(V,E)$ is generated by the monomials $ g_i$, for $i=1, \ldots n$, where $$ g_i= \left\{ \begin{array}{cc}  Fx_i^{-1}y^{-1} &\mbox{if } (x_i, y) \not \in E \\
 %Fx_i^{-1} &\mbox{if } (x_i, y) \in E. \\
%\end{array}\right. $$ In particular $c(G)=2$ and therefore $I$ is quasi-equigenerated by Proposition \ref{quasic2}.
Let $G$ be an almost complete graph with $n \geq 4$ vertices and let $I$ be its cover ideal.
Let $d= n - |\mathcal{N}(y)|$, where $\mathcal{N}(y)$ is the set of neighbors of $y$. 
\begin{enumerate}
\item If $d=0$, $G = K_{n+1}$.
\item If $d=1$, there exists a unique vertex $x_i$ not adjacent to $y$, and $\N(x_i)=\N(y)$. Thus, by Theorem \ref{sdefequal}, $I$ has fiber cone isomorphic to the fiber cone of $J(K_n)$. %of the cover ideal of the graph $ K_{n}. $
%\item If $d=2$, we observe that any vertex of $G$ is adjacent to every odd cycle in $G$. Hence $ \sdefect(I,2)=1 $ by \bf cite \rm and $G$ satisfies the Indecomposability Property by condition (3) of Lemma... (take $c=1$)
%(By Theorem..., $\sdefect(I,m)=  \sdefect(I,m-2) + \nu(I, m-2),$ where $ \nu(I, m-2) $ is the number of minimal generators of $I^{m-2}$ not divisible by $F$.)
%In this case there are only two vertices $x_i$ and $x_j$ not adjacent to $y$, hence for every $m$, the minimal generators of $I^{m}$ not divisible by $F$ are only of the form $g_h^m$ and $g_i^ag_j^b$ where $a+b=m.$ It follows that $ \nu(I, m) = n+ m -1  $. It is easy to compute now that for $m \geq 3,$ $$\sdefect(I,m)= \left\{ \begin{array}{cc}  k^2 + k(n-1) &\mbox{if } m=2k+1 \\
 %k^2 + kn +1 &\mbox{if } m=2k+2. \\
%\end{array}\right.$$
\end{enumerate} 
\end{remark}

\begin{thm} \label{almostfreiman}
Let $G=(V,E)$ be an almost complete graph with $n \geq 4$ vertices and let $I$ be its cover ideal. Then $I$ is a quasi-equigenerated Freiman ideal of linear type.
\end{thm}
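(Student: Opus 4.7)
The plan is to prove simultaneously that $I$ is quasi-equigenerated and of linear type; the Freiman property will then be immediate from the remark recorded right after Theorem \ref{herzogthm} that any quasi-equigenerated monomial ideal of linear type is Freiman (with $l(I)=\mu(I)$). So the whole argument splits into two parts, each essentially a direct computation once the maximal independent sets of $G$ are listed.

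First I would unpack the combinatorics. Writing $V(G)=\{x_1,\dots,x_n,y\}$ with the induced $K_n$ on the $x_i$'s and $\mathcal{N}(y)\subseteq\{x_1,\dots,x_n\}$, any independent set of $G$ contains at most one $x_i$ (because the $x_i$'s form a clique), so the maximal independent sets are exactly the singletons $\{x_i\}$ for $x_i\in\mathcal{N}(y)$ together with the pairs $\{x_i,y\}$ for $x_i\notin\mathcal{N}(y)$, plus the extra singleton $\{y\}$ only in the degenerate case $G=K_{n+1}$. By Remark \ref{indipcover} the associated minimal $1$-covers have degrees $n$ and $n-1$ respectively. To establish quasi-equigenerateness I would exhibit the weight vector $\alpha\in\mathbb{N}_{\geq 1}^{n+1}$ with $a_j=2$ whenever $x_j\in\mathcal{N}(y)$, $a_j=1$ whenever $x_j\notin\mathcal{N}(y)$, and weight $1$ on $y$. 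A one-line computation gives $d_\alpha=n+|\mathcal{N}(y)|-1$ on every minimal $1$-cover, so $I$ is quasi-equigenerated.

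To show linear type, the key combinatorial observation is that every maximal independent set $U$ of $G$ contains some vertex belonging to no other maximal independent set: for $U=\{x_i\}$ with $x_i\in\mathcal{N}(y)$ the vertex $x_i$ itself works, since the only other maximal sets are $\{x_j\}$ with $j\neq i$ or $\{x_j,y\}$ with $x_j\notin\mathcal{N}(y)$ (hence $j\neq i$); and for $U=\{x_i,y\}$ with $x_i\notin\mathcal{N}(y)$ the vertex $x_i$ again works by the same case analysis. Proposition \ref{primelements} then makes \emph{every} variable $T_U\in F(I)$ prime. Inspecting the proof of that proposition (which goes through Proposition \ref{quotients}), primeness of $T_U$ is equivalent to $T_U$ not dividing any monomial of any binomial generator of the defining ideal $\mathcal{I}$. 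Since this holds for every $U$ and, by Proposition \ref{defidealhomog}, $\mathcal{I}$ is binomial, no variable can appear in any generator of $\mathcal{I}$; hence $\mathcal{I}=(0)$, $F(I)$ is a polynomial ring in $\mu(I)$ variables, and $I$ is of linear type.

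There is no serious obstacle here: the argument is pure bookkeeping of maximal independent sets followed by a uniform application of Proposition \ref{primelements}. The only minor subtlety is the trivial case $G=K_{n+1}$, but this is handled transparently because every maximal independent set is then a singleton $\{v\}$, and $v$ itself is the required private vertex. Combining quasi-equigenerateness with linear type gives the Freiman conclusion.
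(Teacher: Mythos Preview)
Your proof is correct and follows essentially the same route as the paper. The only differences are cosmetic: the paper obtains quasi-equigenerateness by invoking Proposition~\ref{quasic2} (since $c(G)=2$), whereas you exhibit the weight vector explicitly --- but it is the same vector --- and the paper simply asserts ``all minimal $1$-covers are prime in $F(I)$, hence $I$ is of linear type,'' while you unpack this implication by noting that the proof of Proposition~\ref{primelements} actually shows each $T_U$ is absent from every binomial generator of $\mathcal{I}$, forcing $\mathcal{I}=(0)$. One small remark: your claimed \emph{equivalence} between primeness of $T_U$ and its non-appearance in the generators is stronger than what you need or than what Proposition~\ref{primelements} proves; it would be cleaner to say that the private-vertex argument in that proposition directly yields the non-appearance, and then conclude $\mathcal{I}=(0)$.
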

 
 \begin{proof}
Let $U\subseteq V$ be the set of neighbors of $y$. %and let $d:=|V \setminus U|$. 
Set $F:=x_1x_2 \cdots x_ny$. It is easy to check that the cover ideal $I$ of an almost complete graph $G=(V,E)$ is generated by the monomials $h_j= F x_j^{-1}$ for $x_j \in U$, correspondent to the maximal independent set $\lbrace x_j \rbrace$ and $g_l= Fx_l^{-1}y^{-1}$ for $x_l \in V \setminus U$, correspondent to the maximal independent set $\lbrace x_l, y \rbrace$. In particular $c(G)=2$ and therefore $I$ is quasi-equigenerated by Proposition \ref{quasic2}.

By Proposition \ref{primelements}, all the minimal 1-covers of $G$ are prime elements in the fiber cone of $I$. It follows that $I$ is of linear type.
\end{proof}

\section{Families of graphs and Freiman cover ideals}
%Freiman ideals among families of graphs}

In this section, we consider some different families of graphs having quasi-equigenerated cover ideal and we characterize when their cover ideal are Freiman. We also provide explicit computations of their relations and analytic spreads.

\subsection{Pairs of complete graphs sharing a vertex}

\begin{definition} \label{twocompletedef}
Let $m \geq n \geq 2 $. We define the graph $A_{n,m}=K_{n}\oplus_{\{x_n\},V(K_{m-1})}K_{m-1}$ as the graph having vertex set $$ V=\lbrace x_1, x_2, \ldots, x_n, y_2, y_3, \ldots, y_m \rbrace $$ and edge set given by the union $$ E= \bigcup_{1 \leq i < j}^{n} (x_i, x_j) \cup \bigcup_{2 \leq i<j}^{m} (y_i, y_j) \cup \bigcup_{j=2}^{m} (x_n, y_j).   $$ 
\end{definition}

\begin{thm} \label{twocomplete}
Let $m \geq n \geq 2 $ and let $I$ be the cover ideal of the graph $A_{n,m}$. Then, $I$ is quasi-equigenerated and:
\begin{enumerate}
\item The analityc spread of $I$ is $n+m-2.$
\item $I$ is a Freiman ideal if and only if $n \leq 3$.
\end{enumerate}
\end{thm}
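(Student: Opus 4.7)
The plan is to identify the minimal generators of $I$ explicitly, use Proposition \ref{primelements} to make the variable corresponding to the singleton cover $\{x_n\}$ prime in $F(I)$, recognize the resulting quotient as the homogeneous coordinate ring of a Segre product, and then deduce both assertions by a direct computation using Lemma \ref{freimanlemma}.

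Because $x_n$ is adjacent to every other vertex, and any two vertices in $\{x_1,\ldots,x_{n-1}\}$ as well as any two in $\{y_2,\ldots,y_m\}$ are adjacent to each other, the maximal independent sets of $G=A_{n,m}$ are $\{x_n\}$ together with the $(n-1)(m-1)$ pairs $\{x_i,y_j\}$ with $1\le i\le n-1$ and $2\le j\le m$. Writing $F=x_1\cdots x_n y_2\cdots y_m$, the minimal $1$-covers are $h_0=F/x_n$ and $h_{ij}=F/(x_iy_j)$, so $t=\mu(I)=1+(n-1)(m-1)$ and $c(G)=2$. Quasi-equigeneratedness of $I$ follows at once from Proposition \ref{quasic2}. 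Let $T_0$ and $T_{ij}$ denote the variables of the polynomial ring presenting $F(I)$.

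For assertion (1), the vertex $x_n$ belongs only to the maximal independent set $\{x_n\}$, so Proposition \ref{primelements} makes the image of $T_0$ prime in $F(I)$; Proposition \ref{quotients} then yields $F(I)/(T_0)\cong k[h_{ij}:1\le i\le n-1,\,2\le j\le m]$. Since $h_{ij}h_{i'j'}=h_{ij'}h_{i'j}$ whenever $i\neq i'$ and $j\neq j'$, this subring is the toric ring cut out by the $2\times 2$ minors of a generic $(n-1)\times(m-1)$ matrix, i.e.\ the homogeneous coordinate ring of $\mathbb{P}^{n-2}\times\mathbb{P}^{m-2}$ under its Segre embedding, of Krull dimension $n+m-3$. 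As $T_0$ is a nonzerodivisor, $l(I)=\dim F(I)=n+m-2$.

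For assertion (2), I apply Lemma \ref{freimanlemma}: a direct computation gives $a:=t-l(I)=(n-2)(m-2)$. To count the number $b$ of quadratic minimal generators of the defining ideal $\mathcal{I}$ of $F(I)$, I observe that $x_n$ has exponent $0$ in $T_0$ but exponent $1$ in every $T_{ij}$, so in any quadratic binomial identity $T_AT_B=T_CT_D$ the number of $T_0$-factors on each side must agree. The cases involving one or two $T_0$'s yield only trivial identities, while the remaining case produces exactly the Segre minors $T_{ij}T_{i'j'}-T_{ij'}T_{i'j}$ with $i<i'$ and $j<j'$, giving $b=\binom{n-1}{2}\binom{m-1}{2}$. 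By Lemma \ref{freimanlemma}, $I$ is Freiman if and only if $b=\binom{a+1}{2}$; clearing the common factor $(n-2)(m-2)$ (valid for $n\ge 3$) reduces this equality to $(n-3)(m-3)=0$, while the case $n=2$ trivially gives $a=b=0$. Since $n\le m$, the condition becomes $n\le 3$.

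The only step requiring genuine care is verifying that the quadratic component of $\mathcal{I}$ is generated entirely by minors involving the $T_{ij}$; this is handled cleanly by the $x_n$-degree bookkeeping indicated above, after which the remainder of the argument reduces to elementary arithmetic on binomial coefficients.
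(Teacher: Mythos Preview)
Your proof is correct and follows the same overall architecture as the paper: identify the minimal covers, invoke Proposition \ref{quasic2} for quasi-equigeneratedness, use Proposition \ref{primelements} to make $T_0$ prime, count the degree-$2$ generators of $\mathcal{I}$ as $b=\binom{n-1}{2}\binom{m-1}{2}$, and finish with Lemma \ref{freimanlemma}. The $x_n$-degree bookkeeping you use to isolate the Segre minors is exactly the content of the paper's observations $(\ast)$ and $(\ast\ast)$, phrased slightly differently.

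The one genuine difference is how you compute $l(I)$. The paper builds an explicit chain of primes $\mathcal{P}_1\subsetneq\cdots\subsetneq\mathcal{P}_{n-2}$ in $F(I)$, shows each has height $i$ by localizing and using the binomial relations to see that the generators become pairwise associates, and then observes that $F(I)/\mathcal{P}_{n-2}$ is a polynomial ring in $m$ variables. You instead pass to $F(I)/(T_0)$ via Proposition \ref{quotients}, recognize $k[h_{ij}]$ as the homogeneous coordinate ring of the Segre embedding of $\mathbb{P}^{n-2}\times\mathbb{P}^{m-2}$ (dimension $n+m-3$), and add $1$ back using that $T_0$ is a height-one prime in the domain $F(I)$. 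Your route is shorter and more conceptual, at the cost of importing the (standard) fact that the Segre coordinate ring has the claimed dimension; the paper's route is self-contained within the tools it has already developed and makes the prime chain visible, which is the same technique reused later in Theorem \ref{analitycspread}. Either way the arithmetic reducing $b=\binom{a+1}{2}$ to $(n-3)(m-3)=0$ is identical.
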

 
 \begin{proof}
Observe that $c(G)=2$ and hence $I$ is quasi-equigenerated by Proposition \ref{quasic2}. If $n=2$, the graph is almost complete and therefore $I$ is Freiman of linear type by Theorem \ref{almostfreiman}. 
Let $F=x_1x_2\cdots x_ny_2y_3\cdots y_m$. The minimal generators of $I$ are $Fx_n^{-1}$ and the others are of the form $Fx_i^{-1}y_j^{-1}$ for every possible choice of $1\leq i \leq n-1$ and $2\leq j \leq m$. Here we use a slightly different notation for the fiber cone (with respect to Section 4), associating $T $ to $ Fx_n^{-1}$ and $T_{i,j} $ to $ Fx_i^{-1}y_j^{-1}$, and writing $F(I)$ as the quotient of the polynomial ring $ k[T, T_{1,2}, \ldots, T_{1,m}, T_{2,2}, \ldots, T_{n-1,m}] $ by a homogeneous binomial ideal $ \mathcal{I} $.

Let $q=\alpha-\beta \in \mathcal{I}$ where $\alpha, \beta$ are monomials. Observe that by Proposition \ref{primelements}, the image of $T$ is a prime element of $F(I)$ and $T$ divides $\alpha$ if and only if divides also $\beta$. 
Therefore, assuming $q$ part of a set of minimal generators, we get that $\alpha, \beta$ are of the form $ T_{i_1,j_1}T_{i_2,j_2} \cdots T_{i_r,j_r} $. Moreover, notice the following important facts: % holding for every $1\leq i \leq n-1$ and $2\leq j \leq m$: 
\begin{enumerate}
\item[$(\ast)$] $T_{i,j}$ divides $\alpha$ if and only if there exists $k \neq j$ such that $T_{i,k}$ divides $\beta.$
\item[$(\ast \ast)$] For every $i_1 \neq i_2, j_1 \neq j_2$, the binomial $T_{i_1,j_1}T_{i_2,j_2} - T_{i_1,j_2}T_{i_2,j_1}$ is a minimal generator of $\mathcal{I}$.
\end{enumerate}
%($\ast$)  \\
If $\deg(q)=2$ and $\alpha=T_{i,j}T_{i,k}$ for the same $i$, then $\beta$ is clearly forced to be equal to $\alpha$, making $q=0$. Hence all the minimal generators of $\mathcal{I}$ of degree $2$ are of the form described in $(\ast \ast)$. The number of these generators is $$ b= \binom{n-1}{2}\binom{m-1}{2}. $$
Now, we compute the analytic spread $l(I)$ of $I$. With an abuse of notation we write also the elements of $F(I)$ in the form $T,T_{i,j}$ instead of writing their classes modulo $\mathcal{I}$. Define for $i=1, \ldots, n-2$ the ideals: 
$$ \mathcal{I}_i:= (T_{i,2}, T_{i,3}, \ldots, T_{i,m}) \subseteq F(I) $$ and $$\mathcal{P}_i= \sum_{k=1}^{i}\mathcal{I}_i \subseteq F(I). $$
The fact $(\ast)$ together with Proposition \ref{quotients}, shows that each ideal $ \mathcal{P}_i $ is prime. Moreover we want to prove by induction that the height of $\mathcal{P}_i$ is $i$. 
For $i=1$, consider the localization $F(I)_{\mathcal{P}_1}$ and observe that $T$ and all the elements $T_{i,j}$ with $i \neq 1$ are units in this ring. By the fact $(\ast\ast)$, it is easy to see that, for every $j,k$, $T_{1,j}$ and $T_{1,k}$ are associated in $F(I)_{\mathcal{P}_1}$, and hence $F(I)_{\mathcal{P}_1} \cong k[T_{1,2}]_{(T_{1,2})}$ has dimension one. Assume now for some $i \leq n-2$ that $\mathcal{P}_{i-1}$ has height $i-1$ and show that $\mathcal{P}_{i}$ has height $i$. The ideal $ J := \frac{\mathcal{P}_{i}}{\mathcal{P}_{i-1}} $ is prime in the ring $ R:= \frac{F(I)}{\mathcal{P}_{i-1}} $ and the binomials of the form $T_{i,j}T_{l,k} - T_{i,k}T_{l,j} \in \mathcal{I} \setminus \mathcal{P}_{i-1}$ for every $j,k$ and $i < l \leq n-1$. Hence, again by the fact $(\ast\ast)$, $T_{i,j}$ and $T_{i,k}$ are associated in $R_{J}$, and hence, as in the previous case, $R_{J}$ has dimension one, implying $F(I)_{\mathcal{P}_i}$ to have dimension $i$.
Finally, observe that $$ \frac{F(I)}{\mathcal{P}_{n-2}} \cong k[T, T_{n-1,2}, \ldots, T_{n-1,m}]  $$ is a polynomial ring in $m$ variables and therefore $l(I)= n-2+m.$ Set $$ a := \mu(I)-l(I)= ((n-1)(m-1)+1) - (n+m-2) = (n-2)(m-2). $$
By Lemma \ref{freimanlemma}, comparing $b$ with $\binom{a+1}{2}$, we get $$ \binom{n-1}{2}\binom{m-1}{2} \leq \binom{(n-2)(m-2)+1}{2} $$ and $I$ is a Freiman ideal if and only if the equality holds. If $n=2,3$, the equality holds, otherwise for $m,n \geq 4$, a straightforward computation shows that the inequality is strict and therefore $I$ is not Freiman in such cases.
\end{proof}

\begin{remark}
Following the notation of the preceding theorem, the defining ideal of the fiber cone of the cover ideal of $A_{n,m}$ can be seen to be generated by the minors of the $n \times m$ matrix $$ \begin{pmatrix}
T_{i_1,j_1} & T_{i_1,j_2} & \ldots & T_{i_1,j_m}  \\ 
T_{i_2,j_1} & T_{i_2,j_2} & \ldots & T_{i_2,j_m} \\
\vdots & \vdots & \vdots & \vdots \\
T_{i_n,j_1} & T_{i_n,j_2} & \ldots & T_{i_n,j_m} 
\end{pmatrix}.  $$ This gives another argument to characterize when these ideals are Freiman using Theorem \ref{matrix}.
\end{remark}

\subsection{Circulant graphs}

 In Theorem \ref{quasicirculant} we classified which circulant graphs have (quasi-)equigenerated cover ideals. Here we describe which of their cover ideals are Freiman. We recall that given a graph $G$ and a vertex $x_i \in V(G)$, we call $G_i$ the induced subgraph of $G$ on the set $V(G) \setminus (\mathcal{N}(x_i) \cup \{ x_i \})$.

%\begin{lem}
%\label{CircShift}
 %Let $G=C_n(1,\ldots,s)$ be a circulant graph, and let $H$ be an independent set in $G_m$ for some $m\in\{1,\dots,n\}$.  Let $\phi:V\rightarrow V$ be defined by $\phi(x_i)=x_{i+1}$ (modulo $n$).  If $\phi(H)\subseteq G_m$, then $\phi(H)\cup\{x_{m+1}\}$ is an independent set in $G$.
%\end{lem}
%\begin{proof}
%Since for all $x_i,x_j\in H$, $|i-j|> s$, $\phi(H)$ is an independent set in $G$.  If $\phi(H)\subseteq G_m$, then $x_{m+1}$ is not adjacent to any vertex in $\phi(H)$, so $\phi(H)\cup\{x_{m+1}\}$ is an independent set in $G$. 
%\end{proof}

\begin{thm}
\label{CircFrei}
Let $G=C_n(1,\ldots,s)$. The cover ideal $J(G)$ is Freiman if and only if $s>\frac{n-4}{2}$ or $G\in\{C_5,C_7\}$.
\end{thm}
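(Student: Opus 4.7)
The plan is to combine Theorem~\ref{quasicirculant} with a case split parametrized by the relation between $n$ and $s$. Since Freimanness presupposes quasi-equigeneratedness, I restrict attention to $(n,s)$ with $s\geq (n-1)/3$ or $s=(n-3)/4$. The condition $s>(n-4)/2$ is equivalent to $n\leq 2s+3$, which forces the maximal independent sets of $G=C_n(1,\ldots,s)$ to be few and very structured. Note that $C_5$ falls in the range $n=2s+3$, while $C_7$ sits outside it at $n=4s+3$ and must be treated separately.

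For the if-direction I would prove in each case that $J(G)$ is of linear type, which makes $a=\mu(I)-l(I)=0$ and Freimanness automatic via Lemma~\ref{freimanlemma}. When $n\leq 2s+1$ the graph is $K_n$: each vertex lies in a unique (singleton) maximal independent set, so Proposition~\ref{primelements} forces every $T_U$ to be prime, and reading the proof of that proposition more carefully shows that no binomial in $\mathcal{I}$ can involve any $T_U$; hence $\mathcal{I}=0$. When $n=2s+2$ the $s+1$ antipodal pairs $\{x_i,x_{i+s+1}\}$ are the maximal independent sets and each vertex lies in exactly one of them, so the same argument gives $\mathcal{I}=0$. When $n=2s+3$ the $n$ maximal independent sets are the pairs $U_i=\{x_i,x_{i+s+1}\}$ but each vertex lies in exactly two of them; I would instead translate a hypothetical relation $\prod h_{U_{i_k}}=\prod h_{U_{j_l}}$ into the system $c_i+c_{i+s+1}=0$ on the multiplicity differences $c_i$, and use that $\gcd(s+1,n)=1$ together with the odd order $n=2s+3$ to iterate along the orbit and conclude $c_i=-c_i=0$. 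For $C_7$ I would list the seven size-$3$ maximal independent sets and verify that the $7\times 7$ vertex--set incidence matrix is the circulant with first row $[1,0,0,1,0,1,0]$; its determinant $\prod_{k=0}^{6}(1+\omega^{3k}+\omega^{5k})$ is nonzero because three distinct seventh roots of unity cannot sum to zero, so the cover generators are algebraically independent and $J(C_7)$ is of linear type.

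For the only-if direction, the remaining quasi-equigenerated graphs are $C_{4s+3}(1,\ldots,s)$ with $s\geq 2$ and $C_n(1,\ldots,s)$ with $2s+4\leq n\leq 3s+1$ (which forces $s\geq 3$). In each such family I would exhibit an explicit cubic binomial in the defining ideal $\mathcal{I}$ that is not contained in the ideal generated by the quadratic part of $\mathcal{I}$; by Remark~\ref{2generated} such a primitive cubic generator rules out Freimanness. The construction exploits that for $n\geq 2s+4$ the non-neighbors of any fixed vertex form an induced copy of $P_{(n-2s-1,s)}$ on at least four vertices, whose cover ideal is not equigenerated by Proposition~\ref{equipath}; combining two different minimal covers of this subgraph with the neighborhood of the vertex produces two products of three generators that coincide as monomials in the $x_i$ but whose pairwise subproducts cannot be matched.

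The main obstacle will be this last step: producing the cubic relation uniformly across the two non-Freiman families and ruling out that it already lies in the ideal generated by the quadratic binomials. I expect this to require careful $x$-exponent bookkeeping using the circulant symmetry, together with a reduction via the induced subgraphs $G_i$ of Definition~\ref{gi} to small base cases such as $C_{10}(1,2,3)$ and $C_{11}(1,2)$, verified directly on the fiber cone and then propagated to all larger parameters.
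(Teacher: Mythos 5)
Your ``if'' direction is essentially sound, and in places more explicit than the paper: the cases $n\leq 2s+1$ and $n=2s+2$ do follow from Proposition~\ref{primelements} (every variable $T_U$ prime forces $\mathcal{I}=0$), your linear-system argument for $n=2s+3$ (using that $\gcd(s+1,2s+3)=1$ and that the orbit has odd length, so each multiplicity difference satisfies $c_i=-c_i$) correctly yields linear type, and the circulant-determinant check for $C_7$ is valid. The genuine gap is in the ``only if'' direction: the mechanism you propose for manufacturing the cubic relation cannot work. You want to exploit that the induced subgraph $G_1\cong P_{(n-2s-1,s)}$ has non-equigenerated cover ideal, but by Theorem~\ref{quasicirculant} the quasi-equigeneratedness of $J(G)$ --- which you rightly assume throughout, since Freimanness is only defined in that setting --- is \emph{equivalent} to $J(G_1)$ being equigenerated. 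Concretely, in the range $2s+4\leq n\leq 3s+1$ one has $n-2s-1\leq s$, so $P_{(n-2s-1,s)}$ is a complete graph (in particular it has only three vertices when $n=2s+4$, not ``at least four''), and for $n=4s+3$ one has $n-2s-1=2s+2$, which is precisely the equigenerated exceptional case of Proposition~\ref{equipath}. So in every case you actually need to handle, the premise of your construction fails.

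The fallback of verifying $C_{10}(1,2,3)$ and $C_{11}(1,2)$ directly and ``propagating to all larger parameters'' is also unsupported: Lemma~\ref{quasi1} and the passage to $G_i$ transfer (quasi-)equigeneratedness, not minimal degree-$3$ generators of the defining ideal of the fiber cone, and nothing in this circle of ideas lets you lift non-Freimanness from an induced subgraph back up to $G$. What does work, uniformly for $s\geq 2$ and $n\geq 2s+4$, is to choose a maximal independent set $H$ of $G_1$ containing $x_{n-s-2}$ (possible because $G_1$ has at least three vertices whose last three form a clique, so $x_{n-s-1},x_{n-s}\notin H$) and to exploit the rotation $\phi(x_i)=x_{i+1}$: the sets $\{x_1\}\cup\phi^j(H)$, $\{x_2\}\cup\phi^j(H)$, $\{x_3\}\cup\phi^j(H)$ for small $j$ are maximal independent sets of $G$, and among the corresponding covers one exhibits an explicit degree-$3$ binomial minimal generator of $\mathcal{I}$, namely $h_{H_2}h_{H_6}h_{H_7}-h_{H_3}h_{H_4}h_{H_8}$ in the paper's labelling, which rules out Freimanness by Remark~\ref{2generated}. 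Your overall architecture --- produce a primitive cubic relation --- is the right one, but the relation must come from the rotational symmetry of the circulant, not from a failure of equigeneratedness that your standing hypothesis forbids.
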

\begin{proof}
Both $J(C_5)$ and $J(C_7)$ are of linear type and thus are Freiman. Since we only want to consider graphs with equigenerated cover ideal, we suppose $s\geq 2$.
First assume $s\leq\frac{n-4}{2}$.  Then $G_1$ has vertex set $\{x_{s+1},x_{s+2},\dots,x_{n-s}\}$ of size at least $3$ and $x_{n-s},x_{n-s-1},x_{n-s-2}$ are contained in a clique of $G_1$.  There exists a maximal independent set $H$ in $G_1$ with $x_{n-s-2}\in H$.

 Let $\phi:V\rightarrow V$ be defined by $\phi(x_i)=x_{i+1}$ (modulo $n$).  Note that $\phi$ preserves adjacency of vertices. Since $x_{n-s-2}\in H$ and $s\geq 2$, $x_{n-s-1},x_{n-s}\not\in H$, hence $H_1=\{x_1\}\cup H$, $H_2=\{x_1\}\cup\phi(H)$, and $H_3=\{x_1\} \cup \phi^2(H)$ are all independent sets of $G$.  Similarly, we see that $H_4=\{x_2\}\cup\phi(H)$,  $H_5=\{x_2\} \cup \phi^2(H)$, $H_6=\{x_2\}\cup\phi^3(H)$, $H_7=\{x_3\} \cup \phi^2(H)$, and $H_8=\{x_3\}\cup\phi^3(H)$ are all maximal independent sets of $G$.  However we see then that 
 \[h_{H_2}h_{H_6}h_{H_7}-h_{H_3}h_{H_4}h_{H_8}\]
 %\[\frac{F}{x_1\phi(H)x_{n-s-1}}\frac{F}{x_2\phi^3(H)x_{n-s+1}}\frac{F}{x_3\phi^2(H)x_{n-s}}-\frac{F}{x_1\phi^2(H)x_{n-s}}\frac{F}{x_2\phi(H)x_{n-s-1}}\frac{F}{x_3\phi^3(H)x_{n-s+1}}\] 
is a minimal generator of $\mathcal{I}$ of degree 3.  Thus $J(G)$ is not Freiman by Remark \ref{2generated}.

Suppose that $s>\frac{n-4}{2}$.  Then either $|G_i|=2$, $|G_i|=1$, or $|G_i|=0$ for all $i$.  If each $|G_i|=0$, then $G$ is complete and $J(G)$ Freiman.  If each $|G_i|=1$ then $J(G)$ is generated by elements of the form %$\frac{F}{x_ax_{a+s}}$
$F(x_ax_{a+s+1})^{-1}$.  For each $x_a$, there is exactly one element of this generating set not divisible by $x_a$, and thus $J(G)$ is Freiman by Proposition \ref{primelements}.  

If each $|G_i|=2$ then $J(G)$ is generated by elements of the form $h_{a,b}=F(x_ax_b)^{-1}$ where $b\in\{a+s+1,a+s+2\}$. As usual we consider $T_{a,b}$ to be the image of $h_{a,b}$ in the fiber cone of $J(G)$. Let $\alpha - \beta$ be a binomial minimal generator of $\mathcal{I}$, where $\alpha, \beta$ are monomials. If $T_{a,a+s+1}$ divides $\alpha$, then $T_{a,a+s+2}$ divides $\beta$, since $T_{a,a+s}$ and $T_{a,a+s+2}$ are the only generators of $J(G)$ which $x_a$ does not divide.  Since the highest power of $x_{a+s+1}$ dividing $T_{a,a+s+1}$ is one less than the highest power dividing $T_{a,a+s+2}$, there must exist another $T_{c,d}$ dividing $\beta$ and not divisible by $x_{a+s+1}$.  As $|G_{a+s+1}|=2$, we know that the only such element is $T_{a+s+1,a-1}$.  But then the highest power of $x_{a-1}$ dividing $\beta$ is one lower than the highest power dividing $\alpha$ unless another generator of $J(G)$ which is not divisible by $x_{a-1}$ divides $\alpha$.  Continuing this process, we see that all the generators of $J(G)$ must divide both $\alpha$ and $\beta$, and therefore the fiber cone is a  polynomial ring.  Thus $J(G)$ is Freiman.
\end{proof}

\subsection{Whiskered graphs}

 \begin{definition}
Given a graph $G$ with $V(G)=\{x_1,\dots,x_n\}$, the whiskering of $G$ is the graph $\widetilde{G}$ with $V(\widetilde{G})=V(G)\cup\{y_1,\dots,y_n\}$ and $E(\widetilde{G})=E(G)\cup\{\{x_i,y_i\}|i=1,\dots,n\}$. The graph in next picture is the whiskering of $C_3.$
 \end{definition}
 \begin{center}
     \begin{tikzpicture}
     \draw[fill] (-1,0) circle [radius=.06];
     \draw[fill] (1,0) circle [radius=.06];
     \draw[fill] (0,1.414) circle [radius=.06];
     \draw[fill] (0,3.144) circle [radius=.06];
     \draw[fill](2.73,-.75) circle [radius=.06];
     \draw[fill] (-2.73,-.75) circle [radius=.06];
     \draw[thick] (-1,0)--(1,0);
     \draw[thick] (-1,0)--(0,1.414);
     \draw[thick] (1,0)--(0,1.414);
     \draw[thick] (0,1.414)--(0,3.144);
     \draw[thick] (-1,0)--(-2.73,-.75);
     \draw[thick] (1,0)--(2.73,-.75);
     \end{tikzpicture}
 \end{center}

\begin{definition}
Given a graph $G$, let $I=J(G)$ be its cover ideal. For any squarefree 1-cover $f \in I$ (non necessarily minimal), we define $A_f = \{i \, | \, x_i \mbox{ does not divide } f\} $ and $$ f_w:= f \prod_{i \in A_f} y_i.$$
 \end{definition}

\begin{prop} \label{genwhisk}
Let $G$ be a graph on $n$ vertices and let $\widetilde{G}$ be its whiskering.  Then $J(\widetilde{G})$ is minimally generated by the set $$ \{f_w \, | \, f \in J(G) \mbox{ squarefree 1-cover} \} $$ and it is equigenerated of degree $n$.
\end{prop}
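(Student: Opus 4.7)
The plan is to set up a bijection between squarefree $1$-covers of $G$ (not necessarily minimal) and maximal independent sets of $\widetilde{G}$, then transport this across Remark \ref{indipcover} to identify the minimal monomial generators of $J(\widetilde{G})$.

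First I would establish the key structural observation: for any maximal independent set $U$ of $\widetilde{G}$ and every index $i$, \emph{exactly} one of $x_i, y_i$ lies in $U$. At most one lies in $U$ because $\{x_i,y_i\}\in E(\widetilde{G})$; at least one lies in $U$ because otherwise we could adjoin $y_i$ to $U$ (its unique neighbor $x_i$ being absent from $U$), contradicting maximality. Consequently, writing $S := U \cap V(G)$, the set $U$ is recovered from $S$ via $U = S \cup \{y_i : x_i \notin S\}$. Conversely, for any independent set $S$ of $G$, the set $S \cup \{y_i : x_i \notin S\}$ is easily checked to be a maximal independent set of $\widetilde{G}$ (independence uses $E(G)\subseteq E(\widetilde{G})$ and the fact that $y_i$ is only adjacent to $x_i$). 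Thus maximal independent sets of $\widetilde{G}$ correspond bijectively to \emph{arbitrary} independent sets of $G$.

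Next I would translate this across Remark \ref{indipcover}. By that remark, squarefree $1$-covers of $G$ (not necessarily minimal) correspond to independent sets of $G$ via $S \mapsto f_S = \prod_{x_i\notin S} x_i$, while minimal $1$-covers of $\widetilde{G}$ correspond to maximal independent sets of $\widetilde{G}$ via $U \mapsto h_U = \prod_{v\notin U} v$. Under the bijection above, given a squarefree $1$-cover $f = f_S$ of $G$, we have $A_f = S$, so
\[
 f_w = f \prod_{i\in A_f} y_i = \prod_{x_i\notin S} x_i \cdot \prod_{x_i \in S} y_i = \prod_{v\notin U} v = h_U,
\]
where $U = S \cup \{y_i : x_i \notin S\}$. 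Hence the assignment $f \mapsto f_w$ is a bijection onto the set of minimal monomial generators of $J(\widetilde{G})$.

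Finally, the degree computation is immediate: $\deg f_w = (n - |S|) + |S| = n$, so every minimal generator has degree $n$ and $J(\widetilde{G})$ is equigenerated of degree $n$. The only delicate point is remembering that $f$ is allowed to be any squarefree $1$-cover of $G$, not merely a minimal one; this is forced by the previous paragraph, since independent sets $S \subseteq V(G)$ need not be maximal in $G$ even though the corresponding $U$ is automatically maximal in $\widetilde{G}$ (the whisker vertices fill in whatever $S$ omits).
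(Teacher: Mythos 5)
Your proof is correct, and it proceeds along a somewhat different line than the paper's. The paper argues directly with monomials: it observes that each $f_w$ is a $1$-cover of $\widetilde{G}$ and then, given two distinct squarefree $1$-covers $f,h$ of $G$, picks $x_i$ dividing $f$ but not $h$, so that $y_i$ divides $h_w$ but not $f_w$ and hence $f_w$ and $h_w$ are incomparable under divisibility; the fact that \emph{every} minimal $1$-cover of $\widetilde{G}$ actually arises as some $f_w$ is left implicit there. You instead make that completeness step explicit and structural: the observation that a maximal independent set of $\widetilde{G}$ contains exactly one vertex from each whisker pair $\{x_i,y_i\}$ gives a bijection between maximal independent sets of $\widetilde{G}$ and \emph{arbitrary} independent sets of $G$, and transporting this through Remark \ref{indipcover} identifies the minimal generators of $J(\widetilde{G})$ precisely with the monomials $f_w$, with the degree count $(n-|S|)+|S|=n$ falling out immediately. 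Your route is slightly longer but yields at once that the listed monomials are $1$-covers, are each minimal, and exhaust the minimal generators, whereas the paper's shorter argument establishes the first two points directly and relies on the reader to supply the third; your bijection is also the cleaner explanation of why non-maximal independent sets of $G$ (equivalently non-minimal squarefree $1$-covers $f$) must be allowed, a point the paper's proof uses but does not highlight.
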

\begin{proof}
Clearly, by definition, $f_w$ is a 1-cover of $\widetilde{G}$. To show that they are all minimal covers of $\widetilde{G}$, take $f,h \in J(G)$ squarefree 1-covers. Hence, we may find $x_i$ dividing $f$ and not $h$ (or we may find the opposite case). It follows that $x_i$ divides $f_w$ and not $h_w$, while $y_i$ divides $h_w$ and not $f_w$, and therefore $f_w$ and $h_w$ are both minimal. Obviously each $f_w$ has degree $n$, making $J(\widetilde{G})$ equigenerated in degree $n$.
\end{proof}

 Let $G$ be a graph on $n$ vertices and let $J(\widetilde{G})$ be the cover ideal of the whiskering of $G$. We make use of the following notation, using Proposition \ref{genwhisk}, to describe the minimal generators of $J(\widetilde{G})$. Since $F=x_1x_2 \cdots x_n$ is a 1-cover of $G$, then $F=F_w$ is also a minimal 1-cover of $ \widetilde{G} $. The others generators of $J(\widetilde{G})$ are of the form $$ h_U := (F \prod_{x_i \in U} x_i^{-1})_w $$ where $U$ is an independent set of $G$. In the case $U=\lbrace x_i \rbrace$, we denote $$ h_i := (F x_i^{-1})_w. $$ By these observations, it follows that $\mu(J(\widetilde{G}))= n + 1 + d$ where $d$ denotes the number of independent sets of $G$ of cardinality at least two.

\begin{thm} \label{analitycspread}
Let $G$ be a graph on $n$ vertices and let $I=J(\widetilde{G})$ be the cover ideal of the whiskering of $G$. The analytic spread of $I$ is $l(I)= n+1.$
\end{thm}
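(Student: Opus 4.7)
The plan is to realise $F(I)$ as a finitely generated toric $k$-algebra and to compute its Krull dimension by a transcendence-degree argument. Since $I=J(\widetilde{G})$ is equigenerated of degree $n$ by Proposition \ref{genwhisk}, Proposition \ref{domain} identifies $F(I)$ with the $k$-subalgebra of $R'=k[x_1,\dots,x_n,y_1,\dots,y_n]$ generated by the minimal $1$-covers of $\widetilde{G}$, namely $F,h_1,\dots,h_n$ together with the $h_U$ for $|U|\geq 2$. In particular $l(I)=\dim F(I)=\mathrm{tr.deg}_k F(I)$.

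The central ingredient is the monomial identity
\[F^{|U|-1}\cdot h_U \;=\; \prod_{x_j\in U} h_{j}\]
valid for every independent set $U$ of $G$. For $U=\{x_{i_1},\dots,x_{i_k}\}$ this is verified by a direct exponent comparison in $R'$: both sides are equal to
\[\Bigl(\prod_{x_m\notin U}x_m^{k}\Bigr)\Bigl(\prod_{j=1}^{k} x_{i_j}^{k-1}\Bigr)\Bigl(\prod_{j=1}^{k} y_{i_j}\Bigr).\]
Consequently every generator $h_U$ is a Laurent monomial in $F,h_1,\dots,h_n$, so $F(I)\subseteq k(F,h_1,\dots,h_n)$ inside the fraction field of $R'$; this gives the upper bound $\mathrm{tr.deg}_k F(I)\leq n+1$.

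For the matching lower bound it suffices to check that $F,h_1,\dots,h_n$ are themselves algebraically independent over $k$, and since they are monomials in $R'$ this reduces to the $\mathbb{Z}$-linear independence of their exponent vectors in $\mathbb{Z}^{2n}=\mathbb{Z}_x^n\oplus\mathbb{Z}_y^n$, which are $(\mathbf{1},\mathbf{0})$ for $F$ and $(\mathbf{1}-e_j,e_j)$ for $h_j$. Given an integer relation $c_0(\mathbf{1},\mathbf{0})+\sum_{j=1}^n c_j(\mathbf{1}-e_j,e_j)=\mathbf{0}$, reading off the $y$-coordinates forces $\sum_{j=1}^n c_je_j=\mathbf{0}$, hence $c_1=\cdots=c_n=0$; the residual equation $c_0(\mathbf{1},\mathbf{0})=\mathbf{0}$ then forces $c_0=0$. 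Combined with the previous paragraph this gives $l(I)=n+1$. The only non-mechanical step is spotting the identity $F^{|U|-1}h_U=\prod_{x_j\in U}h_j$, which packages the whiskering structure of $\widetilde{G}$ as a relation among the minimal generators of $I$; once it is in hand, the remainder is routine linear algebra.
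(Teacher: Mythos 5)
Your proof is correct, but it follows a genuinely different route from the paper. You exploit the toric structure: after invoking Proposition \ref{domain} to identify $F(I)$ with the monomial subalgebra $k[F,h_1,\dots,h_n,h_{U}]$, you get the upper bound $l(I)\le n+1$ from the single relation $F^{|U|-1}h_U=\prod_{x_j\in U}h_j$ (which does hold; it is essentially the binomial $T_UT^{|U|-1}-T_{i_1}\cdots T_{i_{|U|}}$ that the paper only uses later, in the proof of Theorem \ref{cg2}), and the lower bound from the linear independence of the exponent vectors $(\mathbf{1},\mathbf{0})$ and $(\mathbf{1}-e_j,e_j)$, which is the standard criterion for algebraic independence of monomials; both computations check out. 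The paper instead builds an explicit chain of primes $\mathcal{P}_1\subsetneq\cdots\subsetneq\mathcal{P}_{n+1}$ in $F(I)$ generated by the variables $T_U$ with $x_i\in U$, proves primality via Proposition \ref{quotients}, and computes the height of each $\mathcal{P}_i$ inductively by localization, mirroring the argument of Theorem \ref{twocomplete}. Your transcendence-degree argument is shorter and avoids the primality and height bookkeeping entirely; the paper's approach yields more structural by-products (explicit prime ideals of the fiber cone with polynomial-ring quotients, and a template reused elsewhere in the paper), but for the bare statement $l(I)=n+1$ your route is a clean and complete alternative.
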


\begin{proof}
We proceed along the line of the computation of the analytic spread done in the proof of Theorem \ref{twocomplete}.
Let us associate the variable $T $ to $F$ and, for any independent set $U$ of $G$, the variable $T_U$ to the 1-cover $h_U$ (for $U= \lbrace x_i \rbrace$ we call the variables $T_{U}:= T_i $).
Thus, we present $F(I)$ as the quotient of the polynomial ring $ k[T, T_{1}, \ldots, T_{n}, T_{U_1}, \ldots, T_{U_d}] $ by a homogeneous binomial ideal $ \mathcal{I} $. The generators of $\mathcal{I}$ are of the form \begin{equation} \label{1}
T_{U_{l_1}}T_{U_{l_2}} \cdots T_{U_{l_r}}T^c - T_{U_{j_1}} T_{U_{j_2}} \cdots T_{U_{j_s}}
\end{equation} where $c +r=s$, $U_{l_v},U_{j_k}$ are independent sets of $G$, and $U_{l_v} \neq U_{j_k}$ for every $l_v, j_k$. With an abuse of notation we write also the elements of $F(I)$ in the form $T_{U}$ instead of writing their classes modulo $\mathcal{I}$.
%Let $q=\alpha-\beta \in \mathcal{I}$ where $\alpha, \beta$ are monomials. Observe that $T$ divides $\alpha$ if and only if it divides also $\beta$ and therefore, assuming $q$ part of a set of minimal generators, we get that $\alpha, \beta$ are of the form $ T_{i_1,j_1}T_{i_2,j_2} \cdots T_{i_r,j_r} $. Moreover, we notice the following important facts: % holding for every $1\leq i \leq n-1$ and $2\leq j \leq m$: 
%\begin{enumerate}
%\item[$(\ast)$] $T_{i,j}$ divides $\alpha$ if and only if there exists $k$ such that $T_{i,k}$ divides $\beta.$
%\item[$(\ast \ast)$] For every $i_1 \neq i_2, j_1 \neq j_2$, the binomial $T_{i_1,j_1}T_{i_2,j_2} - T_{i_1,j_2}T_{i_2,j_1}$ is a minimal generator of $\mathcal{I}$.
%\end{enumerate}
%($\ast$)  \\
%If $\deg(q)=2$ and $\alpha=T_{i,j}T_{i,k}$ for the same $i$, then $\beta$ is clearly forced to be equal to $\alpha$, making $q=0$. Hence all the minimal generators of $\mathcal{I}$ of degree $2$ are of the form described in $(\ast \ast)$. The number of these generators is $$ b= \binom{n-1}{2}\binom{m-1}{2}. $$
%Let $t=\mu(I)$ and write $$ F(I) \cong \dfrac{k[T_1, \ldots, T_t]}{\mathcal{I}}, $$ with each $T_j$ corresponding to a minimal 1-cover of $\widetilde{G}$. 
 %It is easy to observe that each of the minimal generators of this form is a prime element of $k[T_1, \ldots, T_t]$ and hence $F(I)$ is an integral domain \bf check that this is true\rm. 
 Define now for $i=1, \ldots, n$ the ideals $$ \mathcal{I}_i:= (T_U \, | \, U \mbox{ is an independent set of } G \mbox{ and } x_i \in U ) \subseteq F(I). $$ Notice that $\mathcal{I}_i$ always contains $T_i$ and it is principal if and only if $x_i$ is connected to any other vertex of $G$. We define, for $i=1, \ldots, n$, $$\mathcal{P}_i= \sum_{k=1}^{i}\mathcal{I}_k $$ and $$\mathcal{P}_{n+1}= \mathcal{P}_{n} + (T),$$ and we observe that since any minimal 1-cover of $\widetilde{G}$ belongs to some of the ideals $ \mathcal{I}_i $, the homogeneous maximal ideal of $F(I)$ is equal to $ \mathcal{P}_{n+1} $. We need to show that $ \mathcal{P}_i $ is a prime ideal of $F(I)$ of height $i$ and this will imply our thesis. 
 
 Consider a binomial $q$ minimal generator of $\mathcal{I}$. Clearly, since $q$ is of the form given in (\ref{1}), $x_i \in U_{l_v}$ for some $v$ if and only if $x_i \in U_{j_k}$ for some $k$, hence either $q \in \mathcal{P}_i$ %in the quotient ring $R_i:=\frac{F(I)}{\mathcal{P}_i}$,
 or all the independent sets $U_{l_v},U_{j_k}$ do not contain the vertices $x_1, \ldots, x_i$. By Proposition \ref{quotients}, it follows that for every $i$, $\mathcal{P}_i$ is a prime ideal. \\
We prove by induction that the height of $\mathcal{P}_i$ is $i$. Consider the localization $F(I)_{\mathcal{P}_1}$ and observe that $T$ and all the elements $T_U$ such that $x_1 \not \in U$ are units in this ring. Since, if $\lbrace x_1 \rbrace \subsetneq U$, the binomial $T_UT-T_1T_{U \setminus \lbrace x_1 \rbrace} \in \mathcal{I}$, then $T_1$ is associated to $T_U$ in $F(I)_{\mathcal{P}_1}$, and hence $F(I)_{\mathcal{P}_1} \cong k[T_1]_{(T_1)}$ has dimension one. Applying the same argument inductively, as done in the proof of Theorem \ref{twocomplete}, we get that $F(I)_{\mathcal{P}_i} \cong k[T_1,\ldots, T_i]_{(T_1,\ldots, T_i)}$ has dimension $i$ and therefore $ \mathcal{P}_i $ has height $i$.
\end{proof}

 %$$Fh_U=h_{U_1}h_{U_2}$$ where $U$ is an independent set of $G$ with at least two elements and $U_1 \sqcup U_2 = U$
 
\begin{thm} \label{cg2}
Let $G$ be a graph on $n$ vertices. %such that $c(G)=2,$ 
Let $I=J(\widetilde{G})$ be the cover ideal of the whiskering of $G$. Then $I$ is Freiman if and only if $G$ is almost complete.
\end{thm}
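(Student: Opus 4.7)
The plan is to combine Lemma~\ref{freimanlemma} with the classification of squarefree Freiman ideals in Theorem~\ref{matrix}. First, by Proposition~\ref{genwhisk} and Theorem~\ref{analitycspread}, $\mu(I)=1+n+d$ and $l(I)=n+1$, where $d$ denotes the number of independent sets of $G$ of size at least two; hence $a=d$, and Freimanness of $I$ is equivalent to $b=\binom{d+1}{2}$. I will identify $F(I)$ with the toric ring on generators $T_W$ indexed by independent sets $W\subseteq V(G)$ (including $W=\emptyset$), tracking relations through the exponent vectors $\chi_W\in\{0,1\}^{V(G)}$: the quadratic part of $\mathcal{I}$ consists exactly of the binomials $T_AT_B-T_CT_D$ with $\chi_A+\chi_B=\chi_C+\chi_D$.

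For the direction $(\Leftarrow)$, write $G$ as the almost complete graph with induced $K_{n-1}$ on $\{v_1,\dots,v_{n-1}\}$ and $v_n$ adjacent to $U\subseteq\{v_1,\dots,v_{n-1}\}$, and let $\overline{U}=\{v_{j_1},\dots,v_{j_d}\}$. Every $v_k\in U$ has full degree in $G$, so Proposition~\ref{primelements} shows that $T_{\{v_k\}}$ is prime in $F(I)$; moreover, since $v_k$ belongs to no independent set of $G$ other than $\{v_k\}$, this variable appears in no nontrivial binomial relation. The remaining generators will be arranged in the $2\times(d+1)$ matrix
\[
M=\begin{pmatrix}T_\emptyset & T_{\{v_{j_1}\}} & \cdots & T_{\{v_{j_d}\}}\\ T_{\{v_n\}} & T_{\{v_{j_1},v_n\}} & \cdots & T_{\{v_{j_d},v_n\}}\end{pmatrix},
\]
and I will verify that each of its $\binom{d+1}{2}$ two-by-two minors lies in $\mathcal{I}$ by comparing exponent vectors. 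The prime determinantal ideal $I_2(M)$ defines a rational normal scroll of dimension $d+2$; after adjoining the $n-1-d$ free polynomial variables $T_{\{v_k\}}$ ($v_k\in U$), the resulting ring has dimension $(n-1-d)+(d+2)=n+1=l(I)$. Since $I_2(M)\subseteq\mathcal{I}$ and both cut out domains of the same dimension, $\mathcal{I}=I_2(M)$, and Theorem~\ref{matrix} yields that $I$ is Freiman.

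For $(\Rightarrow)$, assume $I$ is Freiman. If $\mathcal{I}=(0)$, then $d=0$, so $G=K_n$ is almost complete. Otherwise Theorem~\ref{matrix} writes $\mathcal{I}=I_2(M')$ for some generic $2\times m$ matrix $M'$ of variables $T_W$, and inspection of its minors shows the two rows satisfy $\chi_{b_i}-\chi_{a_i}=v^{\star}$ for a common nonzero $v^{\star}\in\{-1,0,1\}^{V(G)}$. Since $T_\emptyset$ occurs in the nontrivial relation $T_\emptyset T_W=T_{\{x\}}T_{W\setminus\{x\}}$ whenever $|W|\ge 2$ and $x\in W$, it must sit in $M'$; placing it in the top row forces $v^{\star}=\chi_P$ with $P\subseteq V(G)$ non-empty. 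If $|P|\ge 2$, any $v\in P$ would make the singleton $\{v\}$ a free variable while simultaneously participating in the nontrivial relation $T_\emptyset T_P=T_{\{v\}}T_{P\setminus\{v\}}$, contradicting Proposition~\ref{quotients}. Hence $P=\{p\}$ is a singleton. A parallel argument excludes any free $W$ with $|W|\ge 2$, so the free independent sets are exactly the singletons $\{u\}$ with $u\in\N(p)$, and each such $u$ must have full degree (otherwise some non-edge $\{u,w\}$ with $w\ne p$ would be a forbidden free set of size two). Setting $W_0:=V(G)\setminus(\{p\}\cup\N(p))$ and letting $d'$ be the number of independent sets of $G[W_0]$ of size at least two, the columns of $M'$ are in bijection with independent sets of $G[W_0]$, so $m=1+|W_0|+d'$ while $d=|W_0|+2d'$. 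The Freiman condition $b=\binom{m}{2}=\binom{d+1}{2}$ forces $m=d+1$, hence $d'=0$: the induced subgraph $G[W_0]$ is a clique. Together with the fact that $\N(p)$ is a clique fully joined to $W_0$, this makes $G\setminus\{p\}$ complete, so $G$ is almost complete.

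The main obstacle will be the structural analysis in $(\Rightarrow)$: narrowing down $v^{\star}$ to the indicator of a singleton $\{p\}$ via the primality test of Proposition~\ref{quotients}, identifying the free $T_W$ as singletons of full-degree vertices, and executing the final count $m=d+1\Leftrightarrow d'=0$ that forces $G$ to be almost complete.
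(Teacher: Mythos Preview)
Your argument is correct, but it takes a markedly different and more laborious route than the paper's, particularly in the $(\Rightarrow)$ direction.

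For $(\Leftarrow)$, both proofs arrive at the same $2\times(d+1)$ matrix, but the paper simply counts the degree-two binomial relations directly: the $d$ relations $T_UT-T_iT_j$ together with the $\binom{d}{2}$ relations $T_{U_1}T_k-T_{U_2}T_j$ give $b=d+\binom{d}{2}=\binom{d+1}{2}$, and Lemma~\ref{freimanlemma} finishes. Your dimension argument ($I_2(M)\subseteq\mathcal{I}$ both prime of the same Krull dimension) is valid and has the advantage of identifying $\mathcal{I}$ exactly, but it is more machinery than the count requires.

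For $(\Rightarrow)$, the contrast is sharper. The paper disposes of $c(G)\ge 3$ in one line by writing down the degree-$w$ generator $T_UT^{w-1}-T_{i_1}\cdots T_{i_w}$ of $\mathcal{I}$ and invoking Remark~\ref{2generated}. When $c(G)=2$ and $G$ is not almost complete, the paper observes that two disjoint independent pairs $U_1,U_2$ exist, and then the two relations $T_{U_1}T-T_{i_1}T_{j_1}$ and $T_{U_2}T-T_{i_2}T_{j_2}$ cannot be minors of a single generic $2\times m$ matrix (they share the entry $T$ but have no other common variable), so Theorem~\ref{matrix} fails. Your approach instead reverse-engineers the entire matrix $M'$: you pin down $v^\star=e_p$, show every $T_W$ with $|W|\ge 2$ must be an entry, classify the free variables as $T_{\{u\}}$ for $u\in\N(p)$, force each such $u$ to have full degree, and finally use the column/row count $m=d+1$ to conclude $d'=0$. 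This is all correct, and it yields a finer structural picture of $F(I)$, but it is substantially more work than the paper's two-case split. The paper's proof is the more economical one; yours would be the natural choice if one wanted an explicit description of $\mathcal{I}$ in the non-Freiman case as well.
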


\begin{proof}
We keep the same notation used the proof of Theorem \ref{analitycspread} for the fiber cone $F(I)$ and its defining ideal $\mathcal{I}$. Let $b$ be the number of minimal generators of $\mathcal{I}$ of degree $2$, and let $d$ be the number of independent sets (not necessarily maximal) of $G$ of cardinality at least $2$. By Theorem \ref{analitycspread}, we have $\mu(I)-l(I)=(n+1+d)-(n+1)= d$, and therefore by Lemma \ref{freimanlemma}, $$b \leq \binom{d+1}{2}= d + \binom{d}{2}$$ and $I$ is Freiman if and only if the equality holds. 

First assume $c(G)= w \geq 3$. Let $U=\lbrace x_{i_1}, x_{i_2}, \ldots, x_{i_w} \rbrace$ be an independent set of $G$ of maximal cardinality. Observe that \begin{equation} \label{eq1.5}
T_UT^{w-1}- T_{i_1} T_{i_2}\cdots T_{i_w}
\end{equation}
is a minimal generator of $\mathcal{I}$ of degree greater than 2. It follows by Remark \ref{2generated}, that $I$ is not Freiman.

Assume now $c(G)=2.$ Hence $d$ denotes the number of independent sets of $G$ containing exactly two elements. If $U=\lbrace x_i, x_j \rbrace$ is one of these sets, we have that \begin{equation} \label{eq2}
T_UT- T_i T_j
\end{equation}  is a minimal generator of $\mathcal{I}$, and there are exactly $d$ minimal generators of this form. Given two independent sets $U_1=\lbrace x_i, x_j \rbrace$ and $U_2=\lbrace x_i, x_k \rbrace$ sharing the vertex $x_i$, we get that also \begin{equation} \label{eq3}
T_{U_1}T_k - T_{U_2} T_j
\end{equation} is a minimal generator of $\mathcal{I}$. Observe that $G$ is almost complete if and only if all the independent sets containing two elements share the same fixed vertex. In this case, there are exactly $\binom{d}{2}$ minimal generators of $\mathcal{I}$ of this second form, implying $b = d + \binom{d}{2}$ and completing part of the proof. Naturally in this case, calling $x_i$ the common vertex of all the independent sets, the binomials in (\ref{eq2}) and (\ref{eq3}) are the minors of the $2 \times (d+1)$ matrix: $$ \begin{pmatrix}
T & T_1 & \ldots & T_d  \\ 
T_i & T_{U_1} & \ldots & T_{U_d}
\end{pmatrix}.  $$

In the other case, assuming $G$ not almost complete, there must be two independent sets $U_1,U_2$ of cardinality 2, such that $U_1 \cap U_2 = \emptyset$, indeed, if by way of contradiction this does not happen, there must exist three independent sets of the form $ \lbrace x_i, x_j \rbrace, \lbrace x_j, x_k \rbrace, \lbrace x_k, x_i \rbrace $. But this would imply that $ \lbrace x_i, x_j, x_k \rbrace $ is an independent set, contradicting the fact that $c(G)=2$. 

Now, set $U_1=\lbrace x_{i_1}, x_{j_1} \rbrace$ and $U_2=\lbrace x_{i_2}, x_{j_2} \rbrace$ and assume $U_1 \cap U_2 = \emptyset$. Clearly, the two binomials $T_{U_1}T- T_{i_1} T_{j_1}$ and $T_{U_2}T- T_{i_2} T_{j_2}$ cannot be minors of the same $2\times m$
 matrix, and therefore $I$ is not Freiman by Theorem \ref{matrix}.
\end{proof}

 \begin{cor} 
\label{freimantrees} Let $T$ be a reduced tree.
 The cover ideal of $ T $ is Freiman if and only if $T=P_2,P_4, \widetilde{P_3}$.
\end{cor}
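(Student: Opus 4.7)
The plan is to reduce to Theorem \ref{cg2} by identifying a reduced tree with Freiman cover ideal as the whiskering of a smaller tree, and then classifying the almost complete trees.

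First, since every Freiman ideal is by definition quasi-equigenerated, Theorem \ref{Trees} forces every vertex of $T$ of degree at least $2$ to be adjacent to some leaf. Two leaves sharing a common neighbor $v$ would satisfy $\N(\ell_1) = \N(\ell_2) = \{v\}$, making them equivalent in the sense of Definition \ref{equidef}; since $T$ is reduced this is excluded, so each internal vertex of $T$ carries exactly one attached leaf. Letting $G$ be the induced subgraph of $T$ on the set of internal vertices (with the convention that $G = P_1$ in the degenerate case $T = P_2$), I would argue that $G$ is a tree: it is connected because pruning leaves from a tree preserves connectivity, and acyclic as a subgraph of $T$. The bijection between the leaves of $T$ and $V(G)$ given by the whisker attachments then realizes $T$ as $\widetilde{G}$.

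Next, I would invoke Theorem \ref{cg2} to conclude that $J(T) = J(\widetilde{G})$ is Freiman if and only if $G$ is almost complete. The problem therefore reduces to determining which trees are almost complete. Since a tree is triangle-free, any induced complete subgraph of $G$ has at most two vertices; hence if $|V(G)| \geq 4$, $G$ cannot contain the required induced $K_{|V(G)|-1}$ (which would have at least three vertices) and so is not almost complete. Conversely, $P_1$, $P_2 = K_2$, and $P_3$ are all easily seen to be almost complete. Whiskering these three cases yields $\widetilde{P_1} = P_2$, $\widetilde{P_2} = P_4$, and $\widetilde{P_3}$, which is exactly the list in the statement.

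The only non-routine part of the argument will be the identification $T = \widetilde{G}$ in the first step, which requires the combination of Theorem \ref{Trees} with the reducedness hypothesis to pin down the structure of $T$; once this is in place, everything else follows immediately from Theorem \ref{cg2} and the fact that trees are triangle-free.
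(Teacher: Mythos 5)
Your proof is correct and follows the paper's own route: the paper's proof is precisely ``apply Theorem \ref{Trees} and Theorem \ref{cg2},'' and your argument just fills in the details (reducedness giving exactly one whisker per internal vertex, hence $T=\widetilde{G}$, and triangle-freeness pinning down the almost complete trees $P_1,P_2,P_3$). No gaps; the extra care with the degenerate case $T=P_2$ is a nice touch.
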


 \begin{proof}
 Apply Theorem \ref{Trees} and Theorem \ref{cg2}.
 \end{proof}

\section*{Acknowledgements}
The second author has received financial support from Indam (Istituto Nazionale di Alta Matematica) to spend three months of Spring 2019 at Ohio State University. This article has been carried out during this period of time and the author acknowledges the support of Indam for giving him this opportunity of research. Also special thanks are addressed to Professor Alexandra Seceleanu for inviting him to visit University of Nebraska, and for the interesting conversations about the content of this work.
%Most of this work was carried out at the PRAGMATIC workshop at the Universit\'{a} di Catania in the summer of 2017.  We would like to thank the workshop organizers, Alfio Ragusa, Elena Guardo, Francesco Russo, and Giuseppe Zappal\'{a}, as well as the lecturers and collaborators, Brian Harbourne, Adam Van Tuyl, Enrico Carlini, and T\`{a}i Huy H\`{a}.  The first author is partially supported by NSF grant DMS-1601024, EPSCoR grant OIA-1557417, and the University of Nebraska-Lincoln.  Special thanks go to Adam Van Tuyl, Alexandra Seceleanu, T\`{a}i Huy H\`{a}, Jonathan Monta\~{n}o, and Giancarlo Rinaldo for helpful conversations and guidance.

%\clearpage
\addcontentsline{toc}{chapter}{Bibliography}

\end{document}